\documentclass[final,1p,times,nopreprintline]{elsarticle}

\usepackage{amssymb,amsthm,amsmath, amsfonts}
\usepackage{colortbl}
\usepackage{float}
\usepackage{booktabs}
\usepackage{algorithm}
\usepackage{algpseudocode}
\usepackage{xcolor}
\usepackage{placeins}
\usepackage{enumitem}
\usepackage{lineno}
\usepackage{hyperref}
\hypersetup{
    colorlinks=true,
    urlcolor=blue,     
    linkcolor=black,
    citecolor=black,
    hypertexnames=false
}

\DeclareMathOperator{\tr}{tr}

\newtheorem{theorem}{Theorem}[section]
\newtheorem{corollary}{Corollary}[section]
\newtheorem{definition}{Definition}[section]
\newtheorem{lemma}{Lemma}[section]
\newtheorem{assumption}{Assumption}
\newtheorem{remark}{Remark}[section]

\journal{Journal of Computational Physics}

\begin{document}
\begin{frontmatter}

\title{A Data-Consistent Approach to Ensemble Filtering} 

\author[label1]{Rylan Spence}
\author[label2]{Troy Butler}
\author[label1]{Clint Dawson}
\affiliation[label1]{organization={Oden Institute for Computational Science and Engineering, University of Texas},
            addressline={201 E 24th St},
            city={Austin},
            postcode={78712},
            state={TX},
            country={U.S.}}

\affiliation[label2]{organization={Department of Mathematical and Statistical Sciences, University of Colorado Denver},
            addressline={4000 1201 Larimer St},
            city={Denver},
            postcode={80202},
            state={CO},
            country={U.S.}}

\begin{abstract}
Ensemble filtering of chaotic, partially observed systems is often performed with ensembles far smaller than the state dimension resulting in empirical covariances that are low rank.
Subsequently, stochastic observation perturbations can degrade both accuracy and probabilistic calibration. 
We develop a data-consistent perspective on ensemble filtering and introduce the Quantity-of-Interest Principal Component Analysis Ensemble Data Consistent Filter (QPCA-EnDCF), which is a deterministic method that replaces perturbed observations with a spectrally regularized update in observation space.
The method whitens forecast--observation residuals, computes an empirical eigendecomposition of the residual covariance, and restricts the correction to a rank-$\kappa$ subspace before mapping the increment back to state space through an empirical gain. 
We establish a theoretical framework that separates population and finite-ensemble objects and yields a bias--variance decomposition for the analysis mean. 
The analysis shows that stochastic EnKF variants incur an irreducible $\mathcal{O}(1/N)$ variance contribution from observation perturbations, whereas QPCA-EnDCF replaces this term with projector-estimation variability that is also $\mathcal{O}(1/N)$ but depends on the retained rank and the cutoff gap through eigenspace stability. 
Numerical experiments on the Lorenz--96 system in strongly undersampled regimes demonstrate that QPCA-EnDCF substantially improves spread--skill behavior, temporal tracking between spread and error, and rank-histogram reliability relative to sequential and four-dimensional stochastic EnKF. 
Under the baseline configuration, these calibration gains are accompanied by lower RMSE.
\end{abstract}

\begin{keyword}
Data-Consistent Inversion \sep Ensemble Kalman Filter  \sep Uncertainty Quantification


\end{keyword}

\end{frontmatter}


\section{Introduction}

Sequential data assimilation estimates the evolving state of a dynamical system by combining model forecasts with observational data. 
In high-dimensional settings arising in geophysical fluid dynamics, climate modeling, and related large-scale simulation problems, ensemble-based methods provide tractable approximations to filtering distributions. 
The classical Kalman filter propagates the forecast mean and covariance explicitly \cite{kalman1960}. The Ensemble Kalman Filter (EnKF), introduced by \cite{evensen1994}, replaces that covariance propagation with an empirical covariance computed from an ensemble of model realizations \cite{evensen2003,evensen2009}. This approximation enables state estimation in regimes where the state dimension is much larger than the ensemble size.
When the ensemble size is small relative to the state dimension, the empirical covariance is necessarily low rank and can be strongly affected by sampling error. 
In this regime, ensemble Kalman methods may exhibit variance collapse and systematic underdispersion, particularly in chaotic and partially observed systems where predictive uncertainty is difficult to represent accurately \cite{burgers1998,whitaker2002ensrf,houtekamer2016review}.
Standard stabilization mechanisms, including covariance localization and inflation, can mitigate these effects but do not remove the finite-ensemble sampling limitations that generate them \cite{hamill2001localization,bocquet2011inflation,houtekamer2016review}.
The EnKF literature is now vast and spans geosciences, atmospheric and oceanic prediction, signal processing, and inverse problems.
For broad entry points from complementary perspectives, we refer the interested reader to \cite{evensen2009,houtekamer2016review,roth2017signal,carrassi2018geosciences,asch2016data,law2015data,sanzalonso2023inverse,reich2015probabilistic}; a comprehensive review is beyond the scope of this introduction, so we emphasize works most directly connected to the present development.

A complementary line of work has emerged through stochastic inverse problems and the associated framework of data-consistent inversion (DCI).
Rather than specifying a likelihood and conditioning in the classical Bayesian sense \cite{stuart2010bayesian,tarantola2005inverse,kaipio2005statistical}, DCI seeks parameter distributions whose predicted quantities of interest reproduce observed distributions.
The measure-theoretic foundations of this formulation were established in the inverse-sensitivity series of \cite{butler2012part1,butler2012part2,butler2014sip} and extended to density-based formulations in \cite{butler2018consistent}; subsequent work introduced the maximal updated density (MUD) estimator as a point estimate derived from the data-consistent posterior \cite{pilosov2023mud} and developed sequential and variational extensions for time-dependent and high-dimensional settings \cite{delcastillo2025smud,spence2026variational}.

Ensemble Kalman methods and DCI differ in formulation, but they lead to related update mechanisms. 
Ensemble Kalman methods use empirical covariance structure to construct Gaussian analysis updates from ensemble forecasts, whereas DCI enforces agreement between predicted and observed distributions through pushforward matching. 
Ensemble Kalman algorithms also admit interpretations as interacting particle systems and derivative-free optimization methods; for example, ensemble Kalman inversion was introduced in \cite{iglesias2013eki} and has been connected to preconditioned gradient-flow dynamics in parameter space \cite{schillings2017}, with mean-field limits characterized by deterministic kinetic equations \cite{carrillo2024meanfield}.
These results clarify the role of empirical covariance updates and the Gaussian character of the resulting approximations.
Motivated by these connections, this work develops a perspective that links ensemble filtering with DCI through the maximal updated density principle.
On this basis, we introduce the Quantity-of-Interest Principal Component Analysis Ensemble Data Consistent Filter (QPCA-EnDCF), a deterministic ensemble assimilation method constructed in observation space. Deterministic, perturbation-free ensemble updates are not new in themselves: the ensemble square-root family, including the ensemble adjustment Kalman filter \cite{anderson2001eakf}, the ensemble transform Kalman filter \cite{bishop2001etkf}, the ensemble square-root filter of \cite{whitaker2002ensrf}, and the unifying analysis of \cite{tippett2003ensrf}, already eliminates observation perturbations through algebraic factorization of the analysis covariance. The QPCA-EnDCF construction is deterministic in the same sense, but its update mechanism is structurally different: rather than factoring a Kalman analysis covariance, it builds a data-consistent gain in observation space, identifies dominant residual directions empirically, and restricts the correction to that low-dimensional residual subspace.

The QPCA-EnDCF method whitens forecast--observation residuals, computes an empirical spectral decomposition of the centered mismatch covariance, and restricts the correction to the leading rank-$\kappa$ eigenspace. 
The resulting observation-space increment is then mapped back to state space through an empirical gain. 
This construction imposes deterministic spectral regularization on the ensemble update: perturbation-induced sampling variance is removed, while truncation introduces a controlled bias governed by the retained spectral rank.
The empirical study focuses on probabilistic calibration in addition to point-estimation accuracy. 
In particular, we examine spread--skill relationships, rank-based reliability diagnostics, and the temporal correspondence between ensemble spread and realized forecast error. 
These diagnostics assess whether ensemble distributions provide reliable representations of predictive uncertainty. 

As a baseline for comparison to existing methods, we consider both sequential and four-dimensional assimilation settings. In four-dimensional formulations, observations are assimilated jointly over windows containing multiple observations. When such a window is shorter than the Lyapunov timescale, that is, the characteristic time over which nearby trajectories separate exponentially, forecast trajectories remain correlated across the window, so later observations can constrain earlier states through joint conditioning.
This produces behavior that is absent from purely sequential filters. 
Numerical experiments on the Lorenz--96 system illustrate the behavior of QPCA-EnDCF in strongly undersampled regimes. 
In the experiments considered here, the method maintains near-calibrated uncertainty when ensemble sizes are much smaller than the state dimension, whereas the stochastic ensemble filters considered in this study remain substantially underdispersed despite standard inflation strategies.

The remainder of the paper is organized as follows. 
Section~\ref{sec:problem} establishes the foundational notation and terminology for the various ensemble-based frameworks considered in this work.
Section~\ref{sec:algorithms_uq} describes the sequential and four-dimensional stochastic ensemble Kalman filters and develops the spectral regularization underlying QPCA-EnDCF, including practical considerations for its deployment. 
Section~\ref{sec:motivation} presents an illustrative example on the Lorenz--96 system, examining probabilistic calibration and spread--skill relationships. 
Section~\ref{sec:bv_decomposition} derives the bias--variance decomposition and states the main theorem characterizing the QPCA-EnDCF analysis error. 
Section~\ref{sec:bias_variance_results} investigates the bias--variance tradeoff numerically. Concluding remarks are given in Section~\ref{sec:conclusion}.
Several appendices are also provided.
\ref{sec:Notation} provides a summary table of notation for the matrices and operators utilized in this work.
\ref{app:algorithms} provides a summary of all the algorithms as well as some notes on implementation.
\ref{subsec:rank_histograms} provides details on performing a rank histogram analysis that provide further evidence of the practical utility of the QPCA-EnDCF method.
\ref{sec:concentration_spectral} provides full details on the technical lemmas required to prove the main bias--variance result in Section~\ref{sec:bv_decomposition}.
Finally, ~\ref{app:section5_proofs} collects the proofs of the theorems and corollaries from Section~\ref{sec:bv_decomposition}.

\FloatBarrier


\section{Related Work and Positioning}

A substantial literature addresses the small-ensemble failure of ensemble Kalman methods by regularizing ensemble-derived second-order information while retaining a Gaussian analysis map. In this class of methods, the raw empirical forecast covariance is replaced by a structured surrogate, and the analysis still proceeds through the usual Kalman-style gain computed from that surrogate. Spectral diagonal ensemble Kalman filters are a representative example: Kasanick{\'y} et al.\ restrict the empirical covariance to its diagonal part in a prescribed spectral basis, thereby reducing off-diagonal sampling noise while remaining within a perturbed-observation EnKF framework \cite{kasanicky2015spectral}. The resulting method is effective when the forecast covariance is well represented by a nearly diagonal structure in the chosen basis, but its regularization acts entirely on the prior covariance.

A different covariance-regularization strategy is developed by Tsyrulnikov and Sotskiy, who construct a non-parametric, non-stationary process-convolution model for forecast-error covariance, estimate local spectra from the ensemble, and then use the resulting covariance surrogate in an otherwise standard Gaussian analysis \cite{tsyrulnikov2024regularization}. This yields a substantially richer admissible covariance class than a fixed spectral diagonal model, but the methodological structure is the same: regularization enters through a modeled approximation of prior state-space covariance, and the update remains a plug-in Kalman or 3D-Var map.

A separate and now-standard line of work removes the perturbed-observation contribution to sampling error by replacing the stochastic update with an algebraic, square-root analysis. Canonical examples include the EAKF \cite{anderson2001eakf}, ETKF \cite{bishop2001etkf}, EnSRF \cite{whitaker2002ensrf}, and LETKF \cite{hunt2007letkf}, with the general ensemble square-root framework synthesized in \cite{tippett2003ensrf}. These methods are deterministic in the same sense as QPCA-EnDCF, but they retain the full Kalman analysis covariance and do not impose any reduced-rank constraint in observation space; sampling error in undersampled regimes is then typically controlled by localization and inflation \cite{hamill2001localization,bocquet2011inflation,houtekamer2016review}.

The approaches summarized above address the same small-ensemble instability as the present work, but they do so by imposing structure on prior covariance, either through covariance modeling or through a Kalman square-root analysis, rather than on the forecast--observation mismatch itself. The QPCA-EnDCF framework departs at precisely this point. Rather than postulating a structured covariance model, it builds an empirical data-consistent gain, whitens the stacked forecast--observation residuals using the observation-error covariance, identifies the leading empirical residual directions in observation space, and restricts the correction to that retained residual subspace before mapping the increment back to state space. The DCI/MUD perspective motivates this observation-space locus of regularization, since it emphasizes consistency of predicted and observed quantities of interest \cite{butler2018consistent,pilosov2023mud}. The contribution of the present work lies instead in the specific combination of adaptive residual-space spectral truncation, deterministic removal of perturbed-observation variance, and the accompanying bias--variance analysis that quantifies the tradeoff between truncation bias and projector-estimation variability in undersampled regimes.


\section{Problem Formulation}\label{sec:problem}

We begin by introducing some notation used for indexing that helps unify the presentation of the various methods.
Observation times are indexed by a single global index $k$, with $t_k=kT_{\mathrm{obs}}$ and corresponding states $\mathbf{x}_{t_k}$ (or $\mathbf{x}_k$ when the time dependence is implicit). For four-dimensional methods, we group $L$ consecutive observation times into windows indexed by $w\in\{1,\ldots,W\}$, so the total number of observation times is $K:=WL$. Within window $w$, we use a local within-window index $\ell\in\{1,\ldots,L\}$ and identify it with the global time index via
\begin{equation}
k=k(w,\ell):=(w-1)L+\ell,
\qquad
k_0(w):=(w-1)L,
\qquad
k_w:=wL.
\label{eq:k_w_ell_mapping}
\end{equation}
Thus, window $w$ contains the global indices $k=k_0(w)+1,\ldots,k_w$. 
In particular, any single-time object (state, observation, or observation noise) is indexed by the global $k$ (e.g., $\mathbf{x}_k$, $\mathbf{z}_k$, $\boldsymbol{\eta}_k$), and window structure is expressed by writing $k=k(w,\ell)$ or by using the window endpoints $k_0(w)$ and $k_w$ rather than by introducing a separate subscript pair $(w,\ell)$. Stacked window-level objects retain the window superscript $(w)$, e.g., $\mathbf{z}^{(w)}$, $\mathbf{Z}^{(w)}$, and $\mathbf{R}^{(L)}$. The state dimension is $n$, the number of observed components is $m$, the stacked observation dimension is $d:=mL$, the ensemble size is $N$, and the spectral truncation rank is $\kappa$. Superscripts $f$ and $a$ denote forecast and analysis quantities, respectively; for windowed methods we distinguish window-initial quantities at $k_0(w)$ from window-endpoint quantities at $k_w$ (a precise convention is stated in Remark~\ref{rem:timing_convention}).


\subsection{Observation Model}
\label{subsec:obs_problem}

Observations provide partial and noisy information about the system state at discrete assimilation times. 
In all experiments of the Lorenz--96 model, we observe $m=20$ of the $n=40$ state components through a fixed linear operator $\mathbf{H}\in\mathbb{R}^{m\times n}$ that subsamples the state by retaining every second component. Specifically, $\mathbf{H}$ extracts the odd-indexed variables $x_1,x_3,\ldots,x_{39}$, leaving the even-indexed components unobserved. Observations occur at discrete times $t_k = kT_{\mathrm{obs}}$ for $k=1,2,\ldots$, and we adopt the notational convention that subscript $k$ alone (as in $\mathbf{x}_k$) and subscript $t_k$ (as in $\mathbf{x}_{t_k}$) both refer to the state at time $t_k$, with the latter form used when emphasizing the continuous-time dependence. The observation process at time $t_k$ is modeled as
\begin{equation}
\mathbf{z}_k
=
\mathbf{H}\mathbf{x}_{k}+\boldsymbol{\eta}_k,
\qquad
\boldsymbol{\eta}_k\sim\mathcal{N}(\mathbf{0},\mathbf{R}),
\label{eq:obs_model}
\end{equation}
with observation error covariance $\mathbf{R}=\sigma_{\mathrm{obs}}^2\mathbf{I}_m$ and $\sigma_{\mathrm{obs}}=1.5$. For windowed methods, the observation at within-window index $\ell$ in window $w$ is simply the same per-time observation at global index $k(w,\ell)$; that is, $\mathbf{z}_{k(w,\ell)}=\mathbf{H}\mathbf{x}_{k(w,\ell)}+\boldsymbol{\eta}_{k(w,\ell)}$ with the indexing convention \eqref{eq:k_w_ell_mapping}. The noise variables $\{\boldsymbol{\eta}_{k}\}$ are independent across time and independent of the state, representing measurement error and unresolved representativeness effects.

This $50\%$ spatial subsampling makes the instantaneous inverse problem ill-posed. The null space $\ker(\mathbf{H})$ has dimension $20$, so infinitely many states are consistent with a single observation vector $\mathbf{z}_k$ in the noise-free case. 
With the Gaussian observation noise of~\eqref{eq:obs_model}, the likelihood depends only on $\mathbf{H}\mathbf{x}$ and provides no information about components in $\ker(\mathbf{H})$.
Consequently, when combined with the forecast distribution used by an assimilation algorithm (represented in practice by the forecast ensemble and formalized in Section~\ref{subsec:practical_considerations}), that has full support (e.g., a Gaussian approximation, as opposed to the singular Lorenz--96 invariant measure), the posterior remains supported on all of $\mathbb{R}^n$, with mass in the unobserved directions determined by the prior marginal.
Subsequently, the unobserved components cannot be meaningfully inferred from a single observation time without additional structure.

Temporal dynamics provide the missing structure through the discrete-time evolution $\mathbf{x}_{k+1}=\mathcal{M}(\mathbf{x}_k)$, where subscript $k$ denotes the observation time index and $\mathcal{M}$ represents the transition of system states over one interval $T_{\mathrm{obs}}$.
Specifically, observational information propagates forward in time 
so that unobserved variables are incorporated indirectly via their dynamical coupling to observed components. 
This mechanism underlies the effectiveness of sequential and windowed assimilation in partially observed chaotic systems.

The observation noise level $\sigma_{\mathrm{obs}}=1.5$, relative to the climatological standard deviation used in this work of $\sigma_{\mathrm{clim}}\approx 3.6$ (defined as the temporal standard deviation of individual state components under the invariant measure of the Lorenz-96 model, averaged spatially over all $n=40$ components), yields a signal-to-noise ratio (SNR) of
\begin{equation}
\mathrm{SNR}
=
\frac{\sigma_{\mathrm{clim}}}{\sigma_{\mathrm{obs}}}
\approx 2.4.
\label{eq:snr}
\end{equation}
This value of SNR places a premium on regularization. 
Insufficient regularization allows observational noise to contaminate the analysis and be rapidly amplified by the chaotic dynamics whereas excessive regularization discards informative data and produces underdispersed, overconfident posteriors. 
Effective methods must balance these effects. 
A central premise of this work is that spectral regularization achieves this balance adaptively, without reliance on ad hoc tuning.

To exploit temporal correlations, windowed methods condition jointly on observations from multiple consecutive times. We stack $L$ observations within a window into
\begin{equation}
\mathbf{z}^{(w)}
=
\begin{bmatrix}
\mathbf{z}_{k_0(w)+1}\\
\vdots\\
\mathbf{z}_{k_w}
\end{bmatrix}
\in\mathbb{R}^{d},
\label{eq:stacked_obs}
\end{equation}
where $w$ indexes the window, $k_0(w)=(w-1)L$ and $k_w=wL$ are the corresponding global window endpoints from \eqref{eq:k_w_ell_mapping}, and $d=mL$. Assuming temporally independent observation errors, the augmented observation covariance is block diagonal,
\begin{equation}
\mathbf{R}^{(L)}
=
\begin{bmatrix}
\mathbf{R} & \mathbf{0} & \cdots & \mathbf{0} \\
\mathbf{0} & \mathbf{R} & \cdots & \mathbf{0} \\
\vdots & \vdots & \ddots & \vdots \\
\mathbf{0} & \mathbf{0} & \cdots & \mathbf{R}
\end{bmatrix}
\in\mathbb{R}^{d\times d}.
\end{equation}

A corresponding stacked observation operator $\mathbf{H}^{(L)}$ mapping a window-initial state to predicted stacked observations is introduced later in tangent-linear form in Definition~\ref{def:data_space}.


\subsection{Ensemble Representation}
\label{subsec:ensemble_uq}

Ensemble-based data assimilation represents posterior uncertainty through a finite collection of state realizations rather than through explicit manipulation of full probability densities or covariance operators. An ensemble consists of $N$ state vectors $\mathbf{x}^{(1)},\ldots,\mathbf{x}^{(N)}\in\mathbb{R}^n$, assembled into the ensemble matrix
\begin{equation}
\mathbf{X} = [\mathbf{x}^{(1)},\ldots,\mathbf{x}^{(N)}]\in\mathbb{R}^{n\times N}.
\end{equation}
The ensemble mean
\begin{equation}
\bar{\mathbf{x}}
=
\frac{1}{N}\mathbf{X}\mathbf{1}
=
\frac{1}{N}\sum_{j=1}^N \mathbf{x}^{(j)},
\label{eq:ensemble_mean}
\end{equation}
with $\mathbf{1}\in\mathbb{R}^N$ denoting the vector of ones, provides a Monte Carlo approximation of the posterior mean and serves as the primary point estimate. We adopt the time and window indexing conventions previously mentioned where superscripts $f$ and $a$ denote forecast and analysis ensembles, and for windowed methods we distinguish window-initial and window-endpoint ensembles when needed.

Uncertainty is encoded by fluctuations about the mean. Defining the (state) anomaly matrix
\begin{equation}
\mathbf{A}_{x}
=
\mathbf{X}-\bar{\mathbf{x}}(\mathbf{1})^\top
=
[\mathbf{x}^{(1)}-\bar{\mathbf{x}},\ldots,\mathbf{x}^{(N)}-\bar{\mathbf{x}}],
\label{eq:anomaly_matrix}
\end{equation}
the empirical covariance associated with $\mathbf{X}$ is
\begin{equation}
\widehat{\mathbf{P}}
=
\frac{1}{N-1}\mathbf{A}_{x}\mathbf{A}_{x}^\top.
\label{eq:sample_covariance}
\end{equation}
The $(N-1)^{-1}$ normalization provides the standard finite-sample correction; it is unbiased under independent sampling and remains customary in cycling experiments.
When time indexing is required, we write $\mathbf{X}_k^{f}$ and $\mathbf{X}_k^{a}$ for forecast and analysis ensembles at observation time $t_k$, and denote the corresponding anomalies and empirical covariances by $\mathbf{A}_{x,k}^{f}$, $\mathbf{A}_{x,k}^{a}$ and $\widehat{\mathbf{P}}_{k}^{\,f}$, $\widehat{\mathbf{P}}_{k}^{\,a}$. Ensemble filters compute analysis increments through observation-space covariances derived from predicted observations. For a (possibly stacked) linear observation operator $\mathbf{H}^{(L)}$, define the corresponding ensemble of predicted observations
\begin{equation}
\mathbf{Z}:=\mathbf{H}^{(L)}\mathbf{X}\in\mathbb{R}^{d\times N},
\qquad
\bar{\mathbf{z}}:=\frac{1}{N}\mathbf{Z}\mathbf{1}\in\mathbb{R}^{d},
\qquad
\mathbf{A}_{z}:=\mathbf{Z}-\bar{\mathbf{z}}\mathbf{1}^\top\in\mathbb{R}^{d\times N}.
\end{equation}
Empirical gains are then formed from cross-covariances between state anomalies and observation anomalies. For the EnDCF, these objects are constructed from windowed ensembles and stacked observations, as defined next.

\begin{definition}[EnDCF ensemble covariances and empirical gain]
\label{def:endcf_gain_ensemble}
Fix a window index $w\in\{1,\ldots,W\}$ and let $k_0(w)$ and $k_w$ denote its global start and end indices as in \eqref{eq:k_w_ell_mapping}. Let $\mathbf{X}_{k_0(w)}^{f}\in\mathbb{R}^{n\times N}$ denote the window-initial forecast ensemble and let $\mathbf{X}_{k_w}^{f}\in\mathbb{R}^{n\times N}$ denote the forecast ensemble at the window endpoint (assimilation time). For four-dimensional methods, $\mathbf{X}_{k_w}^{f}$ is obtained by propagating $\mathbf{X}_{k_0(w)}^{f}$ forward through $L$ observation times; for sequential methods, $\mathbf{X}_{k_w}^{f}=\mathbf{X}_{k_0(w)}^{f}$. Let $\mathbf{Z}^{(w)}\in\mathbb{R}^{d\times N}$ denote the corresponding stacked forecast observations,
\begin{equation}
\mathbf{Z}^{(w)}
:=
\bigl[
\mathbf{H}^{(L)}\mathbf{x}_{k_0(w)}^{(1),f},
\ldots,
\mathbf{H}^{(L)}\mathbf{x}_{k_0(w)}^{(N),f}
\bigr]
\in\mathbb{R}^{d\times N},
\end{equation}
where $\mathbf{H}^{(L)}$ is the linear stacked observation operator from Definition~\ref{def:data_space}.
Define the sample means
\begin{equation}
\bar{\mathbf{x}}_{k_w}^{f}
:=
\frac{1}{N}\mathbf{X}_{k_w}^{f}\mathbf{1}\in\mathbb{R}^{n},
\qquad
\bar{\mathbf{z}}_{\mathrm{stack}}
:=
\frac{1}{N}\mathbf{Z}^{(w)}\mathbf{1}\in\mathbb{R}^{d},
\end{equation}
and the corresponding anomaly matrices
\begin{equation}
\mathbf{A}_x
:=
\mathbf{X}_{k_w}^{f}-\bar{\mathbf{x}}_{k_w}^{f}\mathbf{1}^\top
\in\mathbb{R}^{n\times N},
\qquad
\mathbf{A}_z
:=
\mathbf{Z}^{(w)}-\bar{\mathbf{z}}_{\mathrm{stack}}\mathbf{1}^\top
\in\mathbb{R}^{d\times N}.
\end{equation}
The empirical cross-covariance and observation-space covariance are defined by
\begin{equation}
\mathbf{P}_{xz}
:=
\frac{1}{N-1}\mathbf{A}_x\mathbf{A}_z^\top
\in\mathbb{R}^{n\times d},
\qquad
\mathbf{P}_{zz}
:=
\frac{1}{N-1}\mathbf{A}_z\mathbf{A}_z^\top
\in\mathbb{R}^{d\times d}.
\end{equation}
The EnDCF gain used in implementation is the empirical gain
\begin{equation}
\mathbf{K}^{\mathrm{DC}}
:=
\mathbf{P}_{xz}\,\mathbf{P}_{zz}^{\dagger}
\in\mathbb{R}^{n\times d},
\label{eq:endcf_gain_pinv}
\end{equation}
where $(\cdot)^{\dagger}$ denotes the Moore--Penrose pseudoinverse.
\end{definition}

\par\smallskip\noindent\refstepcounter{remark}\textbf{Remark~\theremark}
\label{rem:timing_convention}
In four-dimensional methods, two distinct time indices within a window must be distinguished:
\begin{enumerate}[label=(\roman*),nosep]
\item \textbf{Window-initial states} $\mathbf{x}_{k_0(w)}^{(j),f}$: These are the forecast states at the beginning of window $w$ (corresponding to within-window index $\ell=0$), at the global time index $k_0(w)=(w-1)L$. In the theoretical development, the stacked observation operator $\mathbf{H}^{(L)}$ (Definition~\ref{def:data_space}) denotes the tangent-linear map from a window-initial perturbation to the corresponding stacked observation perturbation over times $k=k_0(w)+1,\ldots,k_w$. In the numerical implementation, however, the stacked forecast observations for member $j$ are obtained by propagating the full nonlinear ensemble trajectory and applying the per-time observation operator $\mathbf{H}$ at each observation time.
The whitened residuals $\mathbf{e}^{(j)}$ are functions of these window-initial states.
\item \textbf{Window-endpoint states} $\mathbf{x}_{k_w}^{(j),f}$: These are the forecast states at the end of window $w$ (corresponding to within-window index $\ell=L$), at the global time index $k_w=wL$, obtained by propagating $\mathbf{x}_{k_0(w)}^{(j),f}$ through $L$ applications of $\mathcal{M}$ (i.e., over $L$ observation intervals of length $T_{\mathrm{obs}}$). The EnDCF gain $\mathbf{K}^{\mathrm{DC}}$ (Definition~\ref{def:endcf_gain_ensemble}) maps observation-space corrections to state-space increments that are applied to $\mathbf{x}_{k_w}^{(j),f}$ to produce the analysis $\mathbf{x}_{k_w}^{(j),a}$.
\end{enumerate}
Throughout this paper, we use the subscript convention explicitly: $\mathbf{x}_{k_0(w)}^{(j),f}$ always denotes the window-initial state (used for residual and $\mathbf{Z}^{(w)}$ computation), while $\mathbf{x}_{k_w}^{(j),f}$ denotes the window-endpoint state (to which the gain is applied). When $\mathbf{x}^{(j),f}$ appears without a time subscript in theoretical statements, it refers to the window-initial state unless the gain matrix $\mathbf{K}$ appears in the same expression, in which case it refers to the window-endpoint state. We use the corresponding notation for forecast means: $\boldsymbol{\mu}_{k_0(w)}^{f}:=\mathbb{E}[\mathbf{x}_{k_0(w)}^{(j),f}]$ and $\boldsymbol{\mu}_{k_w}^{f}:=\mathbb{E}[\mathbf{x}_{k_w}^{(j),f}]$.

The computational efficiency of ensemble methods comes with intrinsic limitations on uncertainty representation. In the small-ensemble regime $N\ll n$ typical of operational applications, ensemble covariances have rank at most $N-1$, which can lead to variance collapse and filter divergence if unmitigated (see Section~\ref{subsec:practical_considerations}). In particular, $\mathbf{P}_{zz}$ has rank at most $\min(d,N-1)$ and is therefore singular whenever $N-1<d$, which is the regime of primary interest in this work. In such cases, the gain \eqref{eq:endcf_gain_pinv} remains well defined, whereas the inverse $\mathbf{P}_{zz}^{-1}$ does not exist. More generally, one may replace $\mathbf{P}_{zz}^{\dagger}$ with a stabilized inverse, such as Tikhonov regularization $(\mathbf{P}_{zz}+\varepsilon\mathbf{I})^{-1}$; the analysis extends verbatim provided the implemented gain is uniformly controlled in operator norm (a mild moment condition is stated later in Section~\ref{subsec:regularity_assumptions}).


\section{Data Assimilation Algorithms}\label{sec:algorithms_uq}

Each method addresses, in a distinct manner, the competing requirements of regularization and uncertainty preservation that arise in the small-ensemble regime. 
The central methodological distinction is whether observational information is incorporated through stochastic perturbations or through deterministic, spectrally regularized corrections. 
As we demonstrate later, this choice has significant consequences for computational efficiency, sampling error, and probabilistic calibration.

\subsection{Sequential Stochastic Ensemble Kalman Filter}

The sequential stochastic ensemble Kalman filter (EnKF) \citep{burgers1998,houtekamer1998data} serves as a baseline method against which the spectral approaches are evaluated. At each observation time, ensemble members are updated independently according to
\begin{equation}
\mathbf{x}^{(j),a}
=
\mathbf{x}^{(j),f}
+
\mathbf{K}_k\bigl(\mathbf{z}_k+\boldsymbol{\epsilon}_{k}^{(j)}-\mathbf{H}\mathbf{x}^{(j),f}\bigr) \in \mathbb{R}^{n},
\qquad j=1,\ldots,N.
\label{eq:enkf_update}
\end{equation}
The random vectors $\boldsymbol{\epsilon}_{k}^{(j)}\sim\mathcal{N}(\mathbf{0},\mathbf{R})$ are independent observation perturbations sampled separately for each ensemble member. The Kalman gain is computed from empirical forecast statistics as
\begin{equation}
\mathbf{K}_k
=
\widehat{\mathbf{P}}^{\,f}_{k}\mathbf{H}^\top
\left(\mathbf{H}\widehat{\mathbf{P}}_{k}^{\,f}\mathbf{H}^\top+\mathbf{R}\right)^{-1}.
\label{eq:kalman_gain}
\end{equation}
By forming the gain in observation space, this expression avoids explicit inversion of the state-space covariance $\mathbf{P}^f$, reducing computational cost and improving numerical stability when $m<n$. Observation perturbations play a dual role in the stochastic EnKF. First, they ensure second-moment consistency in expectation: taking the expectation over the observation perturbations $\{\boldsymbol{\epsilon}_{k}^{(j)}\}$ conditional on the forecast ensemble, the empirical analysis covariance satisfies
\begin{equation}
\mathbb{E}_{\boldsymbol{\epsilon}}[\widehat{\mathbf{P}}_{k}^{\,a}]=(\mathbf{I}-\mathbf{K}_k\mathbf{H})\widehat{\mathbf{P}}_{k}^{\,f}(\mathbf{I}-\mathbf{K}_k\mathbf{H})^\top + \mathbf{K}_k\mathbf{R}\mathbf{K}_k^\top
\end{equation}
in agreement with the Kalman update formula, where $\widehat{\mathbf{P}}^{\,a}$ denotes the sample covariance of the analysis ensemble. Second, by presenting each ensemble member with a distinct realization of the observation, the perturbations prevent collapse of the ensemble onto a single point when the same observation is assimilated repeatedly. These benefits, however, come at the cost of additional sampling variability. The perturbation-induced contribution to the ensemble mean error has expected squared norm $\mathcal{O}(\|\mathbf{K}_k\mathbf{R}^{1/2}\|_F^2/N)$ where $\|\cdot \|_F$ denotes the Frobenius norm, which scales with the gain magnitude and observation error covariance. This sampling noise can substantially degrade both point estimates and uncertainty quantification for small ensemble sizes (see Section~\ref{subsec:spread_skill}). Regularization in the stochastic EnKF arises implicitly through the additive observation covariance $\mathbf{R}$ in the innovation term. This mechanism inflates all eigenvalues of $\mathbf{H}\mathbf{P}^f\mathbf{H}^\top$ uniformly by $\sigma_{\mathrm{obs}}^2$, stabilizing the inversion but failing to distinguish between signal-dominated and noise-dominated subspaces. The resulting regularization is robust but conservative, prioritizing numerical stability over adaptivity.

\subsection{Four-Dimensional Stochastic Ensemble Kalman Filter}

The four-dimensional stochastic ensemble Kalman filter (4D-EnKF) \citep{hunt2004four,evensen2009} extends the sequential formulation by conditioning jointly on observations collected over an assimilation window of length $L$. For a given window $w$, the ensemble is propagated forward through $L$ observation times, generating forecasts $\{\mathbf{x}_{k(w,\ell)}^{(j),f}\}_{\ell=1}^L$ for each member $j$ at global indices $k(w,\ell)=k_0(w)+\ell$. The update targets the window-endpoint state $\mathbf{x}_{k_w}^{(j),f}$ using information from all $L$ observations in the window. Within each window, ensemble members are updated using stacked, perturbed observations
\begin{equation}
\tilde{\mathbf{z}}^{(w,j)}
=
\mathbf{z}^{(w)}+\boldsymbol{\epsilon}^{(w,j)}\in \mathbb{R}^{d},
\qquad
\boldsymbol{\epsilon}^{(w,j)}\sim\mathcal{N}(\mathbf{0},\mathbf{R}^{(L)}),
\label{eq:4d_perturb}
\end{equation}
where $\mathbf{z}^{(w)}$ is the stacked observation vector defined in \eqref{eq:stacked_obs} and $\mathbf{R}^{(L)}$ is the corresponding block-diagonal covariance. The Kalman gain is computed analogously to the sequential case, but now in the augmented observation space $\mathbb{R}^{d}$, relating the window-final state to the stacked observations through cross-covariances computed from the ensemble of forecasts at all $L$ times. The principal advantage of four-dimensional formulations is their ability to exploit temporal correlations. Observations acquired later in the window can inform earlier states through the dynamical model, effectively smoothing the trajectory and indirectly constraining unobserved components. However, the increase in observation-space dimensionality amplifies sampling variance; the net benefit depends on the interplay between improved conditioning and increased sampling noise (see Section~\ref{subsec:practical_considerations} for detailed analysis).

\subsection{Spectral Regularization Through QPCA-EnDCF}

The Quantity-of-Interest Principal Component Analysis Ensemble Data Consistent Filter (QPCA-EnDCF) adopts a fundamentally different strategy, eliminating observation perturbations altogether in favor of deterministic, spectrally regularized corrections. 
This construction is motivated by the data-consistent inversion (DCI) framework and, in particular, by maximal updated density (MUD) estimation. In DCI, one selects a quantity-of-interest map and seeks an updated measure on the state (or parameter) space whose pushforward through that map matches the observed distribution; in density form, the update is obtained by multiplying an initial density by the ratio of observed to predicted densities evaluated in the QoI space \cite{butler2018consistent,pilosov2023mud}. The key point for the present formulation is that the associated regularization is induced by a measure-theoretic change of variables in the observation/QoI space rather than by imposing a quadratic penalty directly in the state space. In linear-Gaussian settings, this yields the selective-regularization interpretation of the MUD point: directions informed by the QoI are constrained through the predicted pushforward, while directions not informed by the QoI inherit only the initial uncertainty structure \cite{pilosov2023mud,delcastillo2025smud}. Here, we take the whitened forecast--observation residual as the basic data-constructed QoI and then compress it to its leading principal components, so the correction is determined in a low-dimensional observation-space representation of the mismatch rather than by penalizing the full state increment as in traditional background-regularized variational formulations \cite{spence2026variational}. From this perspective, the PCA truncation is not merely a numerical stabilization device: it is the mechanism by which the DCI/MUD update is restricted to the dominant, data-informed mismatch directions and prevented from over-correcting noise-dominated components.
The method proceeds through three stages: whitening of forecast--observation residuals, spectral identification of dominant mismatch modes, and construction of truncated corrections. For clarity, we describe the construction here at the algorithmic level. 
In the first stage, residuals are normalized by observation uncertainty to obtain whitened mismatches. For the stacked four-dimensional observation framework with diagonal observation error covariance $\mathbf{R}^{(L)}=\sigma_{\mathrm{obs}}^2\mathbf{I}_{d}$, this takes the form
\begin{equation}
\mathbf{E}
=
\sigma_{\mathrm{obs}}^{-1}
\bigl(\mathbf{Z}^{(w)}-\mathbf{z}^{(w)}\mathbf{1}^\top\bigr)
=
(\mathbf{R}^{(L)})^{-1/2}
\bigl(\mathbf{Z}^{(w)}-\mathbf{z}^{(w)}\mathbf{1}^\top\bigr),
\label{eq:whitening_problem}
\end{equation}
where $\mathbf{Z}^{(w)}\in\mathbb{R}^{d\times N}$ contains the stacked forecast observations for all ensemble members.
The transformation $(\mathbf{R}^{(L)})^{-1/2}$ rescales the observation space so that, in whitened coordinates, the (stacked) observation errors are isotropic with unit covariance. This normalization is standard in data assimilation and greatly simplifies both the geometric interpretation of residuals and the derivation of moment and concentration bounds for sample covariance operators. The scalar form $\sigma_{\mathrm{obs}}^{-1}$ is used when $\mathbf{R}^{(L)}$ is diagonal; implementation details for non-diagonal covariances are provided in~\ref{app:qpca_endcf}. In the second stage, the covariance of the centered whitened residuals is decomposed spectrally,
\begin{equation}
\mathbf{C}_E
=
\frac{1}{N-1}\mathbf{E}_c\mathbf{E}_c^\top
=
\sum_{i=1}^{r}\hat{\lambda}_i\hat{\mathbf{v}}_i\hat{\mathbf{v}}_i^\top = \hat{\mathbf{V}}_r \hat{\boldsymbol{\Lambda}}_r \hat{\mathbf{V}}_r^\top,
\label{eq:spectral_decomp_problem}
\end{equation}
where $r=\min(d,N-1)$ denotes the maximum possible rank of $\mathbf{C}_E$, and $\mathbf{E}_c:=\mathbf{E}-\bar{\mathbf{e}}\mathbf{1}^\top$ denotes the centered whitened residual matrix, with $\mathbf{E}=[\mathbf{e}^{(1)},\ldots,\mathbf{e}^{(N)}]\in\mathbb{R}^{d\times N}$ and $\bar{\mathbf{e}}=\frac{1}{N}\sum_{j=1}^N\mathbf{e}^{(j)}$. 
In practice, the positive eigenvalues quantify the variance of forecast--observation mismatch along orthogonal directions $\hat{\mathbf{v}}_i$. Large eigenvalues correspond to coherent, dynamically meaningful discrepancies, whereas small (but nonzero) eigenvalues primarily reflect sampling and measurement noise. In the final stage, the retained $\kappa$ eigenmodes define the QPCA coordinate map
\begin{equation}
\mathbf{Q}_{\mathrm{PCA}}
:=
\hat{\mathbf{V}}_\kappa^\top\mathbf{E}
\in\mathbb{R}^{\kappa\times N},
\label{eq:qpca_correction}
\end{equation}
where $\hat{\mathbf{V}}_\kappa=[\hat{\mathbf{v}}_1,\ldots,\hat{\mathbf{v}}_\kappa]$ and $\kappa\ll\min(d,N-1)$. Each column of $\mathbf{Q}_{\mathrm{PCA}}$ is the $\kappa$-dimensional representation of the corresponding whitened residual in the retained eigenbasis. Note that eigenvectors $\hat{\mathbf{v}}_i$ are computed from the centered residuals $\mathbf{E}_c$ (characterizing covariance structure), but the coordinate map is applied to the uncentered residuals $\mathbf{E}$ (containing both mean mismatch and fluctuations). This allows correction of both systematic bias (captured by $\bar{\mathbf{e}}$) and random fluctuations about the mean, but only to the extent that these components lie in the retained $\kappa$-dimensional eigenspace. Components of the mean mismatch orthogonal to $\{\hat{\mathbf{v}}_1,\ldots,\hat{\mathbf{v}}_\kappa\}$ are not corrected, which can introduce bias if the mean mismatch does not align well with the leading covariance directions. This truncation implements hard spectral regularization in observation space, retaining dominant mismatch directions while suppressing noise-dominated components.

Lifting the coordinate map back to the whitened observation space and reversing the sign so that the projected residual is driven toward zero yields the whitened projected correction
\begin{equation}
\boldsymbol{\Delta}_{\mathrm{white}}
:=
-\hat{\mathbf{V}}_\kappa\mathbf{Q}_{\mathrm{PCA}}
=
-\hat{\mathbf{V}}_\kappa\hat{\mathbf{V}}_\kappa^\top\mathbf{E}
\in\mathbb{R}^{d\times N}.
\label{eq:qpca_delta_white}
\end{equation}
Because $\boldsymbol{\Delta}_{\mathrm{white}}$ is expressed in whitened coordinates, we unwhiten to obtain the stacked observation-space increment
\begin{equation}
\boldsymbol{\Delta}_{\mathrm{obs}}
:=
(\mathbf{R}^{(L)})^{1/2}\boldsymbol{\Delta}_{\mathrm{white}}
=
-(\mathbf{R}^{(L)})^{1/2}\hat{\mathbf{V}}_\kappa\hat{\mathbf{V}}_\kappa^\top\mathbf{E}.
\label{eq:qpca_delta_obs}
\end{equation}
This definition converts the correction from whitened coordinates back to the original stacked observation space. The spectral truncation is constructed in whitened coordinates so that mismatch directions are ranked relative to observation uncertainty, but the empirical gain $\mathbf{K}^{\mathrm{DC}}=\mathbf{P}_{xz}\mathbf{P}_{zz}^{\dagger}$ maps observation-space increments to state-space increments. Defining $\boldsymbol{\Delta}_{\mathrm{obs}}$ by unwhitening $\boldsymbol{\Delta}_{\mathrm{white}}$ therefore yields an increment with the correct observation-space scaling and makes the subsequent state-space update compatible with the ensemble cross-covariance structure.
The increment $\boldsymbol{\Delta}_{\mathrm{obs}}$ is then mapped back to state space via the empirical cross-covariance gain matrix $\mathbf{K}^{\mathrm{DC}}=\mathbf{P}_{xz}\mathbf{P}_{zz}^{\dagger}$, which projects observation-space innovations onto the state-space ensemble covariance structure, and the resulting state-space increments are applied deterministically to the ensemble (see Definition~\ref{def:endcf_gain_ensemble} and Algorithm~\ref{alg:qpca_endcf} for details).

We now introduce the covariance objects that underlie the QPCA-EnDCF update. These objects are constructed from whitened forecast--observation residuals and admit both population-level definitions, corresponding to idealized infinite-ensemble limits, and sample-level counterparts computed from finite ensembles. Making this distinction explicit is essential for the theoretical analysis that follows, as the algorithm operates on sample quantities whose behavior must be related to their population analogues.

\begin{definition}[Whitened residuals and associated covariances]
\label{def:whitened_residual_covariances}
Let $\mathbf{R}^{(L)}\in\mathbb{R}^{d\times d}$ be symmetric positive definite. Consider a fixed realization of the observation vector $\mathbf{z}^{(w)}\in\mathbb{R}^{d}$, which we treat as deterministic throughout this definition. Let $\mathbf{e}^{(j)}\in\mathbb{R}^{d}$ denote the $j$th column of the whitened residual matrix $\mathbf{E}$ defined in \eqref{eq:whitening_problem}. The corresponding population mean and covariance, conditional on the realized observations $\mathbf{z}^{(w)}$, are given by
\begin{align*}
	\boldsymbol{\mu}_E
	&:=
	\mathbb{E}[\mathbf{e}^{(j)}\mid\mathbf{z}^{(w)}]
	=
	(\mathbf{R}^{(L)})^{-1/2}
	\bigl(\mathbf{H}^{(L)}\boldsymbol{\mu}_{k_0(w)}^{f}-\mathbf{z}^{(w)}\bigr),\\
\boldsymbol{\Sigma}_E
&:=
\mathrm{Cov}(\mathbf{e}^{(j)}\mid\mathbf{z}^{(w)})
=
(\mathbf{R}^{(L)})^{-1/2}
\mathbf{H}^{(L)}\mathbf{P}^{f}(\mathbf{H}^{(L)})^\top
(\mathbf{R}^{(L)})^{-1/2}.
\end{align*}
Here the expectation is taken over the forecast distribution $P_{\mathcal{X}}$ (i.e., over ensemble draws), with the observations held fixed at their realized values. Note that $\boldsymbol{\Sigma}_E$ does not depend on $\mathbf{z}^{(w)}$ under this conditioning, since the covariance structure is determined solely by the forecast distribution. The sample mean $\bar{\mathbf{e}}$ and sample covariance $\mathbf{C}_E$ are defined from $\{\mathbf{e}^{(j)}\}_{j=1}^N$ as in \eqref{eq:spectral_decomp_problem}.
\end{definition}
In the theoretical development that follows, we work conditionally on the realized observation vector $\mathbf{z}^{(w)}$ (equivalently, we treat $\mathbf{z}^{(w)}$ as fixed and take expectations only over ensemble sampling and any algorithmic randomness). To reduce notation, we typically suppress the explicit conditioning ``$\mid\mathbf{z}^{(w)}$'' in expressions such as $\mathbb{E}[\cdot]$ and $\mathrm{Cov}(\cdot)$, except where it is needed for clarity.

\begin{definition}[Population and sample spectral decompositions]
\label{def:spectral_decompositions}
Let $d:=mL$. The population covariance $\boldsymbol{\Sigma}_E$ admits a spectral decomposition
\begin{equation}
\boldsymbol{\Sigma}_E
=
\sum_{i=1}^{d}\lambda_i\,\mathbf{v}_i\mathbf{v}_i^\top,
\qquad
\lambda_1\ge\cdots\ge\lambda_d\ge 0.
\end{equation}
Let $\{(\hat{\lambda}_i,\hat{\mathbf{v}}_i)\}_{i=1}^{d}$ denote the eigenpairs of the sample covariance $\mathbf{C}_E$ from \eqref{eq:spectral_decomp_problem}, ordered so that $\hat{\lambda}_1\ge\cdots\ge\hat{\lambda}_d\ge 0$, with the convention that $\hat{\lambda}_i=0$ for $i>\min(d,N{-}1)$. For any truncation level $1\le\kappa\le \min(d,N-1)$, define the truncated covariances
\begin{equation}
\boldsymbol{\Sigma}_E^{\kappa}
:=
\sum_{i=1}^{\kappa}\lambda_i\,\mathbf{v}_i\mathbf{v}_i^\top,
\qquad
\mathbf{C}_E^{\kappa}
:=
\sum_{i=1}^{\kappa}\hat{\lambda}_i\,\hat{\mathbf{v}}_i\hat{\mathbf{v}}_i^\top.
\end{equation}
These truncated operators play a central role in the construction and analysis of the spectrally regularized EnDCF update.
\end{definition}

\subsection{Practical Considerations}
\label{subsec:practical_considerations}

QPCA-EnDCF is deterministic and therefore introduces no additional sampling noise from perturbed observations that are present in stochastic methods. 
Its regularization is imposed through the truncation level given by $\kappa$.
Specifically, the observation-space correction is restricted to the $\kappa$-dimensional subspace spanned by the leading empirical eigenmodes of the whitened residual covariance, while directions orthogonal to this subspace are left unchanged. 
Operationally, this concentrates corrections on dominant, coherent mismatch modes while suppressing noise-dominated components. 
The truncation level $\kappa$ can be selected using simple spectral heuristics based on the realized eigenvalue spectrum, and we show in subsequent sections that performance is relatively insensitive to its precise value over a broad range. 

It is worth noting that windowed assimilation will amplify the sampling effects in stochastic ensemble methods where the observation perturbations must be drawn in the higher-dimensional space $\mathbb{R}^{d}$. 
Under the assumption that perturbations $\{\boldsymbol{\epsilon}^{(w,j)}\}_{j=1}^N$ are i.i.d.\ with $\boldsymbol{\epsilon}^{(w,j)}\sim\mathcal{N}(\mathbf{0},\mathbf{R}^{(L)})$ and $\mathbf{R}^{(L)}=\sigma_{\mathrm{obs}}^2\mathbf{I}_{d}$, consider the (unbiased) sample covariance
\begin{equation}
\widehat{\mathbf{R}}^{(L)}
:=
\frac{1}{N-1}\sum_{j=1}^N
(\boldsymbol{\epsilon}^{(w,j)}-\bar{\boldsymbol{\epsilon}}^{(w)})
(\boldsymbol{\epsilon}^{(w,j)}-\bar{\boldsymbol{\epsilon}}^{(w)})^\top,
\qquad
\bar{\boldsymbol{\epsilon}}^{(w)}:=\frac{1}{N}\sum_{j=1}^N \boldsymbol{\epsilon}^{(w,j)}.
\end{equation}
Then, with $d=mL$, in the Gaussian case one has
\begin{equation}
\left(\mathbb{E}\bigl[\|\widehat{\mathbf{R}}^{(L)}-\mathbf{R}^{(L)}\|_F^2\bigr]\right)^{1/2}
=
\sigma_{\mathrm{obs}}^2\sqrt{\frac{d(d+1)}{N-1}}
=
\mathcal{O}\!\left(\sigma_{\mathrm{obs}}^2\,\frac{d}{\sqrt{N}}\right),
\end{equation}
and hence, by Jensen's inequality,
$\mathbb{E}\|\widehat{\mathbf{R}}^{(L)}-\mathbf{R}^{(L)}\|_F=\mathcal{O}(\sigma_{\mathrm{obs}}^2\,d/\sqrt{N})$. 
Unlike the non-windowed sequential case (where $d=m$), the prefactor in the windowed case grows linearly with respect to the window size $L$, so for moderate ensemble sizes this $L$ amplification can offset gains obtained from temporal coupling. 
Quantifying and mitigating this tradeoff between information gain and sampling noise motivates the comparative numerical analyses presented in Section~\ref{sec:motivation}. 

We now turn from these operational considerations to the population and sample objects used in the theoretical analysis.
We distinguish between population-level quantities, which are deterministic and defined with respect to the forecast distribution, and sample-level quantities, which are computed from a finite ensemble and are therefore random. Population quantities represent the idealized limit of an infinite ensemble; sample quantities fluctuate around their population analogues due to finite-$N$ sampling effects, and a primary goal of the theoretical results developed later is to quantify this discrepancy via concentration bounds. All random quantities are defined on an underlying probability space $(\Omega,\mathcal{F},\mathbb{P})$, and all expectations are taken with respect to $\mathbb{P}$ unless otherwise stated. In the small-ensemble regime, the empirical observation-space covariance is necessarily singular, requiring a stabilized inverse as mentioned in Subsection~\ref{subsec:ensemble_uq}. 

We now formalize the spectral projection operators that implement dimensional truncation in QPCA-EnDCF. 
These projectors isolate the dominant modes of forecast--observation mismatch and provide the mechanism through which spectral regularization is imposed in observation space. 
They will be the primary deterministic objects controlled in the concentration and perturbation analyses of \ref{subsec:fundamental_concentration} and Section~\ref{subsec:main_bv}.

\begin{definition}[Spectral projectors and QPCA truncation]
\label{def:projectors_truncation}
Let $\mathbf{C}\in\mathbb{R}^{d\times d}$ be a symmetric matrix with orthonormal eigenvectors $\{\mathbf{v}_i(\mathbf{C})\}_{i=1}^{d}$, ordered according to nonincreasing eigenvalues. For any integer $1\le\kappa\le d$, define the rank-$\kappa$ orthogonal projector
\begin{equation}
\mathbf{P}_\kappa(\mathbf{C})
:=
\sum_{i=1}^{\kappa}\mathbf{v}_i(\mathbf{C})\mathbf{v}_i(\mathbf{C})^\top.
\end{equation}
When $\mathbf{C}=\boldsymbol{\Sigma}_E$ we write $\mathbf{P}_\kappa:=\mathbf{P}_\kappa(\boldsymbol{\Sigma}_E)$, and when $\mathbf{C}=\mathbf{C}_E$ we write $\hat{\mathbf{P}}_\kappa:=\mathbf{P}_\kappa(\mathbf{C}_E)$. In QPCA-EnDCF, the empirical projector $\hat{\mathbf{P}}_\kappa$ is used to restrict observation-space increments to a low-dimensional subspace spanned by the leading empirical eigenmodes.
\end{definition}
If $\mathbf{C}$ has repeated eigenvalues at or near the cutoff (e.g., $\lambda_\kappa=\lambda_{\kappa+1}$), the leading $\kappa$-dimensional invariant subspace may be nonunique. In this case, $\mathbf{P}_\kappa(\mathbf{C})$ denotes an orthogonal projector onto some $\kappa$-dimensional invariant subspace associated with the $\kappa$ largest eigenvalues. In the main results, we impose a cutoff gap $\lambda_\kappa-\lambda_{\kappa+1}\ge \delta_\kappa>0$ (Theorem~\ref{thm:bias_variance}), which ensures this subspace, and hence $\mathbf{P}_\kappa$ and $\hat{\mathbf{P}}_\kappa$, are well defined and stable under perturbations.
The effect of spectral truncation depends critically on the alignment between the mean whitened innovation and the retained subspace. Quantities such as $\|(\mathbf{I}-\mathbf{P}_\kappa)\boldsymbol{\mu}_E\|^2$ (or their analogues for approximate maps $Q_s$) measure the component of the mean innovation discarded by truncation. In general, this error cannot be controlled solely by an eigenvalue-tail bound of the form $\sum_{i>\kappa}\lambda_i$ without additional structural assumptions linking the mean $\boldsymbol{\mu}_E$ to the covariance $\boldsymbol{\Sigma}_E$. This distinction plays an important role in the theoretical analysis of truncation bias presented in later sections.



\section{Illustrative Example}\label{sec:motivation}

Before we present the main theoretical results regarding bias-variance of the QPCA-EnDCF method, we use an illustrative example to demonstrate the practical utility and value of this method compared to the EnKF and 4D-EnKF methods discussed above. 
These results serve as the motivation to develop the rigorous theoretical guarantees that follow.

\subsection{Lorenz--96 Dynamical System}
\label{subsec:lorenz96_problem}

We use the Lorenz--96 model \citep{lorenz1996predictability} as the canonical testbed for all numerical experiments. Originally introduced as a low-dimensional surrogate for midlatitude atmospheric dynamics, the system was quickly adopted as a benchmark in ensemble-based data assimilation and targeted-observation studies \citep{houtekamer1998data,lorenz1998optimal,bishop2001etkf,hunt2007letkf}, and it has since remained a standard test problem because it combines sustained chaos with modest computational cost. This balance enables systematic investigation of uncertainty quantification under long sequences of forecast--analysis cycles. The model consists of $n=40$ state variables $\mathbf{x}=(x_1,\ldots,x_{40})^\top\in\mathbb{R}^{40}$ governed by
\begin{equation}
\frac{\mathrm{d}x_i}{\mathrm{d}t}
=
(x_{i+1}-x_{i-2})x_{i-1}-x_i+F,
\qquad i=1,\ldots,40,
\label{eq:lorenz96}
\end{equation}
with periodic indexing modulo $40$ and constant forcing $F=8.0$. The dynamics reflect the interaction of three components: a quadratic advection term $(x_{i+1}-x_{i-2})x_{i-1}$ that transfers energy across scales and induces strong nonlinearity, linear damping $-x_i$ that prevents unbounded growth, and constant forcing that maintains a statistically stationary, nonequilibrium regime. 

Time integration of \eqref{eq:lorenz96} is performed using a classical fourth-order Runge--Kutta scheme with timestep $\Delta t=0.01$. This choice renders numerical discretization error negligible relative to observation noise and ensemble sampling error. Consequently, performance differences observed in the experiments can be attributed to algorithmic and statistical effects rather than time-integration artifacts. 

\subsection{Experimental Configuration and Probabilistic Calibration}\label{subsec:benchmark}

This section specifies the experimental setup used to evaluate ensemble data assimilation methods and introduces diagnostics for probabilistic calibration. 
The configuration is chosen to stress uncertainty quantification under severe ensemble undersampling, so that performance is assessed not only by point accuracy but also by distributional fidelity.

Unless stated otherwise, experiments use ensemble size $N=10$, observe $m=20$ of $n=40$ state variables, and adopt observation noise level $\sigma_{\mathrm{obs}}=1.5$. This extreme undersampling ($N\ll n$) provides a stringent test of whether algorithms preserve probabilistic calibration despite structural rank limitations. Observations are generated by
\begin{equation}
\mathbf{z}_k=\mathbf{H}\mathbf{x}^{\mathrm{true}}_k+\boldsymbol{\eta}_k,
\qquad 
\boldsymbol{\eta}_k\sim\mathcal{N}(\mathbf{0},\sigma_{\mathrm{obs}}^2\mathbf{I}_m),
\end{equation}
where $\mathbf{H}\in\mathbb{R}^{m\times n}$ extracts every second state component. Observations are assimilated every $T_{\mathrm{obs}}=0.1$ time units. Relative to climatological variability $\sigma_{\mathrm{clim}}\approx3.6$, the resulting signal-to-noise ratio is $\mathrm{SNR}\approx2.4$. Windowed methods assimilate windows of $L=5$ consecutive observations, spanning $LT_{\mathrm{obs}}=0.5$ time units. This corresponds to approximately $0.83\,\tau_{\mathrm{Lyap}}$ with $\tau_{\mathrm{Lyap}}\approx0.6$, yielding a temporal autocorrelation of state variables at lag $0.5$ of approximately $0.61$ (computed numerically from a long free run under the Lorenz--96 invariant measure at $F=8$). We perform $W=50$ windows, for a total of $K=WL=250$ observation times. For four-dimensional methods, analysis updates occur once per window (at window endpoints), yielding $W=50$ analysis states. For sequential methods, updates occur at every observation time, yielding $K=WL=250$ analysis states. To enable fair comparison of time-series calibration, spread, RMSE, and spread--skill diagnostics are evaluated at window endpoints only. We index these evaluation times by $w\in\{1,\ldots,W\}$, corresponding to global observation-time indices $k_w:=wL\in\{L,2L,\ldots,WL\}=\{5,10,\ldots,250\}$. Equations~\eqref{eq:spread_framework}--\eqref{eq:rmse_framework} therefore aggregate over $w=1,\ldots,W$. This convention ensures that (i) 4D methods are evaluated at the times when their analysis is actually computed, and (ii) sequential methods are evaluated at comparable temporal spacing, avoiding artifacts from dense versus sparse evaluation. Rank histograms are treated separately: they are computed at each method's native analysis times, so sequential methods contribute ranks at all $K$ analysis times whereas windowed methods contribute ranks at the $W$ window endpoints. Algorithm-specific parameters are fixed across experiments. QPCA-EnDCF uses truncation level $\kappa=1$ (retaining only the leading eigenmode) and no multiplicative inflation ($\lambda_{\mathrm{infl}}=1.00$), consistent with eigenvalue spectra in which the dominant mode captures $60$--$80\%$ of residual variance. Stochastic methods employ multiplicative inflation $\lambda_{\mathrm{infl}}=1.05$, chosen by light empirical tuning within the stochastic baselines, and perturbed observations drawn from $\mathcal{N}(\mathbf{0},\mathbf{R})$ for sequential updates or $\mathcal{N}(\mathbf{0},\mathbf{R}^{(L)})$ for four-dimensional updates.



The fundamental question we address is whether ensemble uncertainty estimates accurately reflect true estimation error, both in magnitude and in temporal dynamics. This question transcends point estimation accuracy. An ensemble method that produces accurate state estimates but severely underestimates uncertainty provides misleading probabilistic information, rendering ensemble forecasts unsuitable for risk-informed decision making. Conversely, a method exhibiting larger point errors but well-calibrated uncertainty quantifies estimation reliability honestly, enabling rational decisions under uncertainty. Probabilistic calibration thus represents a distinct dimension of performance, complementary to but independent of mean-squared error.

We structure the analysis around two complementary diagnostic perspectives, each revealing different aspects of distributional calibration. Section~\ref{subsec:spread_skill} examines spread--skill relationships, which probe whether ensemble variance exhibits the magnitude and temporal correlation with mean-squared error expected under second-moment calibration; for comparability across algorithms, these diagnostics are evaluated at window endpoints. Section~\ref{subsec:rank_histograms} employs rank histogram analysis to assess full distributional calibration without assuming Gaussianity, testing whether the ensemble satisfies the exchangeability property implied by proper probabilistic representation; these histograms are computed at each method's native analysis times. These diagnostics provide complementary information: spread--skill relationships directly connect uncertainty estimates to error but assume elliptical distributions; rank histograms relax parametric assumptions but aggregate information across state components. Together, they provide a comprehensive assessment of whether ensemble distributions reliably quantify estimation uncertainty.

All experiments reported in this section employ five independent Monte Carlo realizations under the baseline configuration specified above.
Diagnostics are evaluated at each window endpoint, yielding $W=50$ evaluation points per trial (and $5W=250$ window-endpoint samples per algorithm when pooling across trials). When reporting summary scalars (e.g., Table~\ref{tab:calibration_metrics}), we compute each metric within a trial by aggregating over its $W$ windows, then report mean $\pm$ standard deviation across the five trials.


\subsection{Spread--Skill Relationships}\label{subsec:spread_skill}

Spread--skill diagnostics assess whether ensemble-estimated uncertainty is consistent with actual estimation error. A calibrated ensemble should satisfy two criteria: its spread should match the RMSE in magnitude (scale consistency) and moves together with it over time (dynamic tracking).

We quantify posterior uncertainty by the time-averaged ensemble spread
\begin{equation}
\sigma_{\mathrm{ens}}
=
\left[
\frac{1}{W}\sum_{w=1}^W \frac{1}{n}\,\tr\!\bigl(\widehat{\mathbf{P}}_{k_w}^{\,a}\bigr)
\right]^{1/2},
\label{eq:spread_framework}
\end{equation}
where $\widehat{\mathbf{P}}_{k_w}^{\,a} = \frac{1}{N-1}{\mathbf{A}^{a}_{k_w}}{\mathbf{A}^{a}_{k_w}}^{\!\top}$ is the empirical analysis covariance at the window endpoint $k=k_w$, formed from ensemble anomalies $\mathbf{A}^{a}_{k_w} = \mathbf{X}_{k_w}^a - \bar{\mathbf{x}}_{k_w}^a\mathbf{1}^{\top}$. Estimation accuracy is quantified by the root-mean-square error
\begin{equation}
\mathrm{RMSE}
=
\left[
\frac{1}{W}\sum_{w=1}^W \frac{1}{n}\|\bar{\mathbf{x}}_{k_w}^a - \mathbf{x}_{k_w}^{\mathrm{true}}\|^2
\right]^{1/2},
\label{eq:rmse_framework}
\end{equation}
where $\bar{\mathbf{x}}_{k_w}^a$ is the ensemble mean analysis at the endpoint $k=k_w$ and $\mathbf{x}_{k_w}^{\mathrm{true}}$ is the corresponding true state. Dynamic tracking is assessed by examining the cycle-wise (per-window) spread and RMSE sequences. Define the per-cycle spread
\begin{equation}
\sigma_w := \left[\frac{1}{n}\,\tr\!\bigl(\widehat{\mathbf{P}}_{k_w}^{\,a}\bigr)\right]^{1/2}
\end{equation}
and the per-cycle RMSE
\begin{equation}
\mathrm{RMSE}_w := \left[\frac{1}{n}\|\bar{\mathbf{x}}_{k_w}^a - \mathbf{x}_{k_w}^{\mathrm{true}}\|^2\right]^{1/2}.
\end{equation}
Calibration is summarized by the cycle-wise spread--skill ratios
\begin{equation}
\gamma_w := \frac{\sigma_w}{\mathrm{RMSE}_w},
\qquad
\bar{\gamma} := \frac{1}{W}\sum_{w=1}^W \gamma_w,
\label{eq:gamma_framework}
\end{equation}
for which ideal calibration corresponds to $\gamma_w \approx 1$ (and hence $\bar{\gamma}\approx 1$); values $\gamma_w \ll 1$ and $\gamma_w \gg 1$ indicate overconfidence and underconfidence, respectively.
The Pearson correlation $\rho$ between the sequences $\{\sigma_w\}_{w=1}^W$ and $\{\mathrm{RMSE}_w\}_{w=1}^W$ measures whether ensemble uncertainty tracks estimation error across time: calibrated ensembles exhibit $\rho$ close to 1. Table~\ref{tab:calibration_metrics} reports, for each algorithm, calibration statistics summarized across five independent trials under the baseline configuration, including mean ensemble spread, RMSE, average spread--skill ratio $\bar{\gamma}$, and the Pearson correlation $\rho$ between the window-wise spread and RMSE sequences.

\begin{table}[htbp]
\centering
\caption{Probabilistic calibration metrics under the baseline configuration ($N=10$, $L=5$, $\sigma_{\mathrm{obs}}=1.5$). Values are mean $\pm$ standard deviation across five independent trials, where each trial-level statistic is computed by aggregating over its $W=50$ windows.}
\label{tab:calibration_metrics}
\begin{tabular}{lcccc}
\toprule
\textbf{Algorithm} & \textbf{Spread $\sigma_{\mathrm{ens}}$} & \textbf{RMSE} & \textbf{Ratio $\bar{\gamma}$} & \textbf{Correlation $\rho$} \\
\midrule
Sequential EnKF & $0.34 \pm 0.06$ & $4.51 \pm 1.11$ & $0.095 \pm 0.108$ & $0.009 \pm 0.111$ \\
4D-EnKF         & $0.45 \pm 0.09$ & $4.42 \pm 1.15$ & $0.120 \pm 0.096$ & $0.219 \pm 0.099$ \\
QPCA-EnDCF      & $2.84 \pm 0.51$ & $3.55 \pm 0.70$ & $0.811 \pm 0.101$ & $0.820 \pm 0.085$ \\
\bottomrule
\end{tabular}
\end{table}

QPCA-EnDCF achieves $\bar{\gamma}=0.811\pm0.101$, close to ideal calibration with only mild residual underdispersion, and exhibits strong temporal tracking with $\rho=0.820\pm0.085$. These calibration gains accompany a $20\%$ reduction in RMSE relative to stochastic methods, indicating simultaneous improvement in point accuracy and uncertainty quantification. In contrast, the Sequential EnKF is severely underdispersed ($\bar{\gamma}=0.095\pm0.108$) with no meaningful temporal coupling ($\rho=0.009\pm0.111$), reflecting variance collapse under extreme undersampling ($N=10\ll n=40$). The 4D-EnKF yields only modest improvement ($\bar{\gamma}=0.120\pm0.096$, $\rho=0.219\pm0.099$), remaining far from calibrated despite an increase in effective degrees of freedom from $m=20$ to $mL=100$. Temporal behavior is illustrated in Figure~\ref{fig:temporal_calibration}, which shows spread and RMSE across the $50$ assimilation windows.

\begin{figure}[htbp]
\centering
\includegraphics[width=\textwidth]{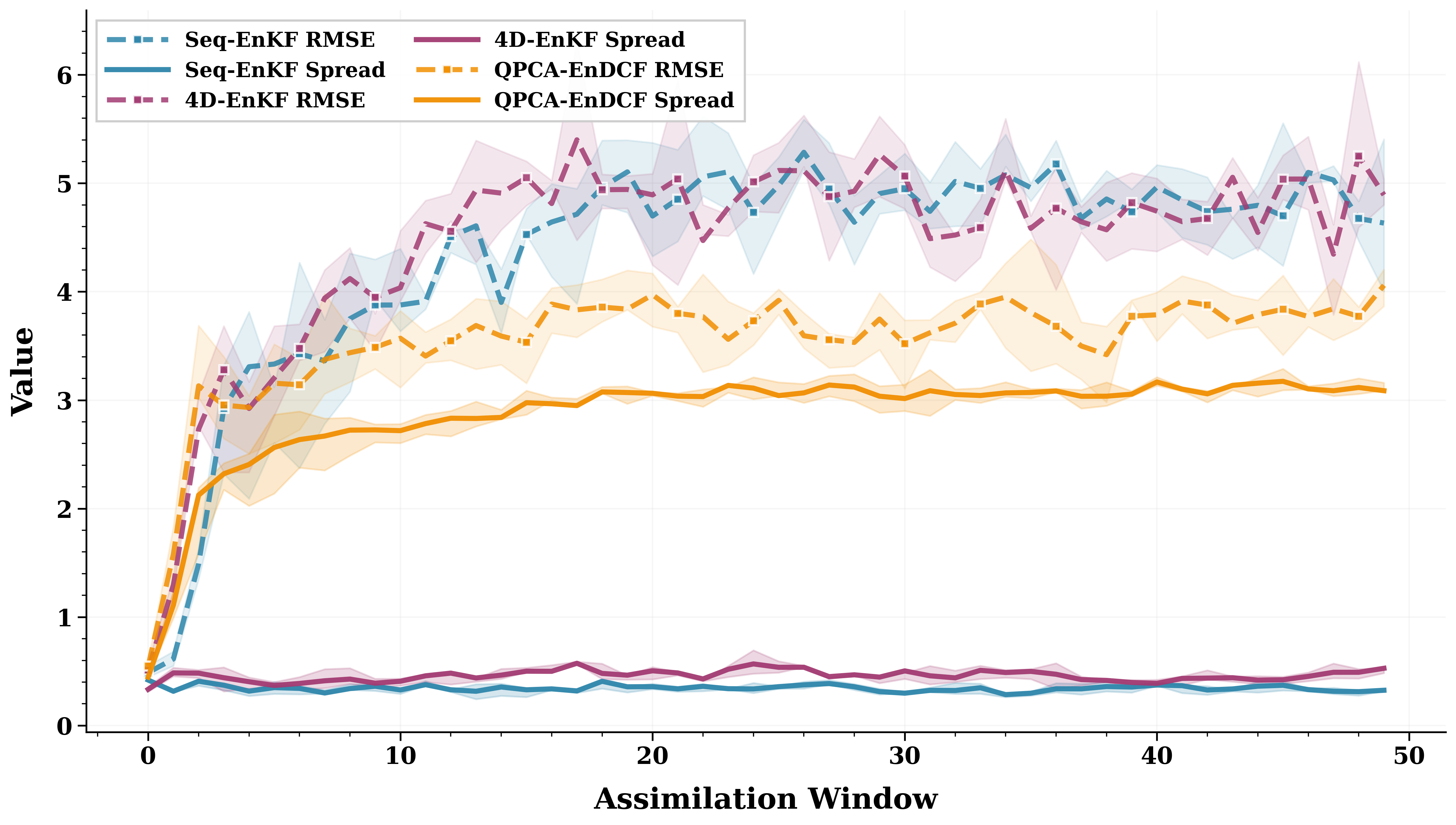}
\caption{Temporal evolution of ensemble spread (solid) and RMSE (dashed) over the 50-window sequence; shaded regions denote the 25th--75th percentile range (IQR band) across five trials. QPCA-EnDCF exhibits coherent covariation between spread and RMSE throughout. Sequential EnKF shows persistently low spread with no tracking of RMSE, while 4D-EnKF displays weak positive correlation but remains underdispersed.}
\label{fig:temporal_calibration}
\end{figure}

QPCA-EnDCF maintains coherent tracking, with spread and RMSE fluctuating together in the range $2$--$4$. Sequential EnKF exhibits the characteristic signature of ensemble collapse: spread remains below $0.5$ while RMSE varies between $3$ and $6$. The 4D-EnKF shows slightly larger spread ($0.4$--$0.6$) with weak coupling. Figure~\ref{fig:reliability} provides complementary calibration diagnostics. QPCA-EnDCF clusters near the identity line across RMSE values $2$--$5$, confirming accurate uncertainty quantification over varying error regimes. Sequential EnKF exhibits near-vertical clustering at $\sigma_{\mathrm{ens}}\approx0.3$, consistent with near-zero spread--skill correlation. The 4D-EnKF shows slightly larger spreads with weak positive association, but remains systematically above the diagonal (underdispersed).
The spread--skill analysis demonstrates that QPCA-EnDCF yields ensembles whose second moments are well calibrated in both magnitude and temporal dynamics, while stochastic methods exhibit order-of-magnitude underdispersion stemming from fundamental design limitations: observation perturbations introduce additional noise, while sampling error is distributed across all degrees of freedom rather than concentrated in dynamically dominant modes.

Spread--skill diagnostics assess second-moment calibration but do not test whether the full ensemble distribution is consistent with the truth. Rank histograms provide a distributional calibration test without Gaussian assumptions, based on exchangeability: for a calibrated ensemble, the true state is statistically indistinguishable from ensemble members, so all ranks occur with equal probability.
In the interest of space, we summarize the main takeaway of the rank histogram analysis and refer the interested reader to~\ref{subsec:rank_histograms} (and Figure~\ref{fig:rank_histograms} more specifically) for more details.
The rank histogram analysis demonstrates that QPCA-EnDCF substantially improves distributional calibration beyond variance matching alone. 
Its flatness metric, which is an order of magnitude smaller than those of the EnKF variants, indicates that the truth is placed appropriately across the ensemble support.
While not perfectly uniform, deviations are minor relative to the severe systematic distortions observed in stochastic methods. The improvement is attributable to the deterministic update and spectral regularization of QPCA-EnDCF, which avoid observation perturbation noise and concentrate corrections into dynamically dominant modes, preserving ensemble diversity and yielding a more faithful approximation of the posterior distribution. 
\begin{figure}[htbp]
\centering
\includegraphics[width=\textwidth]{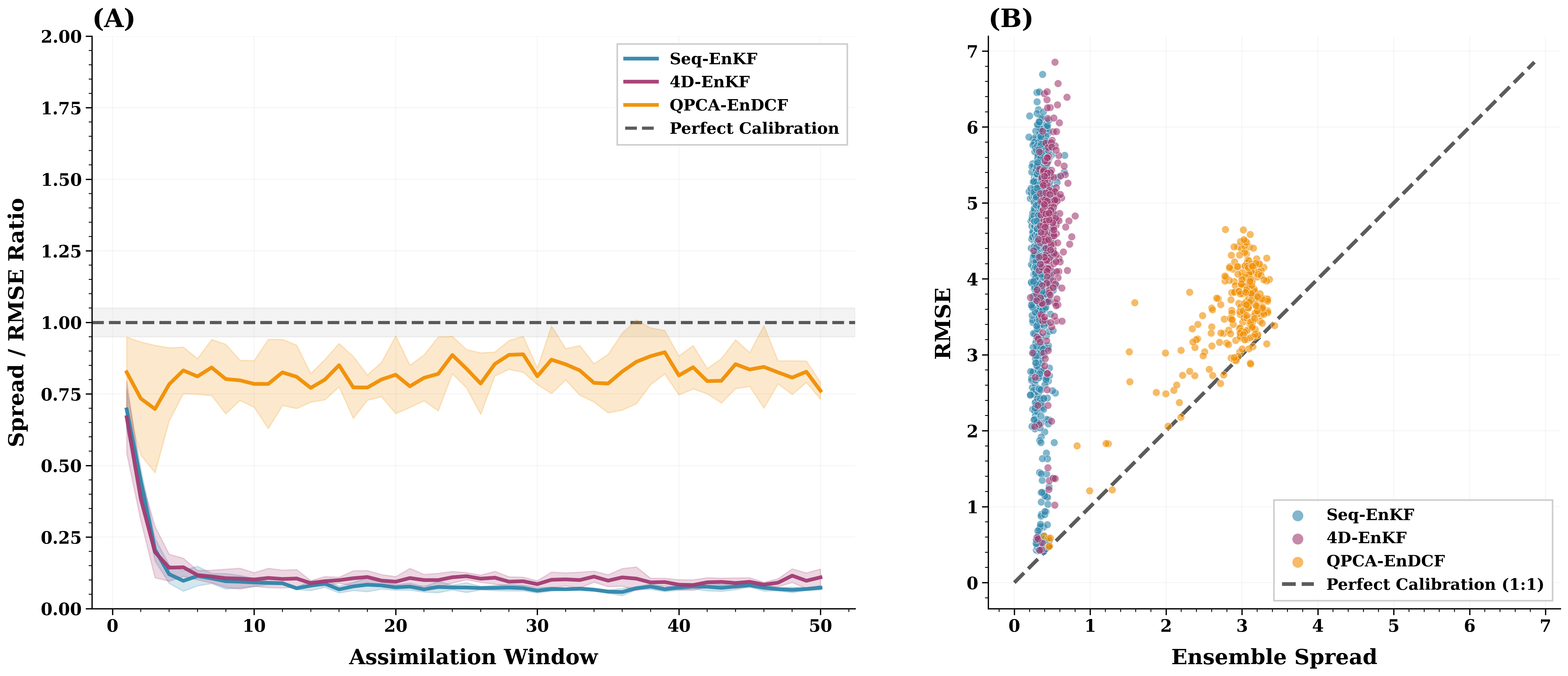}
\caption{Probabilistic calibration diagnostics. \textbf{(A)} Spread--skill ratio $\gamma_w$ across assimilation windows; the dashed line indicates perfect calibration. \textbf{(B)} Reliability diagram plotting window-level ensemble spread $\sigma_w$ against window-level $\mathrm{RMSE}_w$ for all windows and trials, with the diagonal denoting $\sigma_w=\mathrm{RMSE}_w$.}
\label{fig:reliability}
\end{figure}



\FloatBarrier

\section{Bias--Variance Decomposition and Main Theorem}
\label{sec:bv_decomposition}

The calibration diagnostics of the previous section established that QPCA-EnDCF yields well-calibrated uncertainty, as measured by spread--skill relationships and rank histograms. 
These results, however, do not explain why calibration improves. We address this question by decomposing the mean-squared error (MSE) into bias and variance components. 
The theory provided in this section establishes that spectral truncation reduces sampling variance, while any induced truncation bias is governed by the discarded component $\|(\mathbf{I}-\mathbf{P}_\kappa^{\,s})\boldsymbol{\mu}_E^{\,s}\|$ and can remain small when the mean innovation is well aligned with the leading modes; this contrasts with classical regularization strategies, where variance reduction typically comes with a more direct bias penalty.

The main theoretical result of this work requires several technical lemmas on concentration inequalities resulting from finite sampling (providing bounds up to finite fourth-moments) and spectral decomposition.
While these results are essential, they are elementary in the sense of depending on standard statistical and linear algebra arguments. 
We therefore provide these in~\ref{sec:concentration_spectral} for the interested reader. 
Proofs of the theorems and corollaries stated in this section are deferred to~\ref{app:section5_proofs}.

Throughout this section we analyze a single, fixed assimilation window $w$ and use the global window endpoints $k_0(w)$ and $k_w$ from \eqref{eq:k_w_ell_mapping}. In particular, the forecast measure $P_{\mathcal{X}}$ is interpreted as the algorithmic forecast distribution of the window-initial state at time $k_0(w)$, and analysis quantities pertain to the window endpoint at time $k_w$.



\subsection{Measure Spaces and Probability Measures}
\label{subsec:measure_spaces}

\begin{definition}[State space and forecast measure]
\label{def:parameter_space}
Let $\mathcal{X}=\mathbb{R}^n$ denote the state space, equipped with its Borel $\sigma$-algebra $\mathcal{B}_{\mathcal{X}}$ and Lebesgue measure $\mu_{\mathcal{X}}$. We assume the existence of a forecast (prior) density $\pi_{\mathcal{X}}^{f}:\mathcal{X}\to\mathbb{R}_{+}$ that is absolutely continuous with respect to $\mu_{\mathcal{X}}$ and satisfies
\begin{equation}
\int_{\mathcal{X}}\pi_{\mathcal{X}}^{f}(\mathbf{x})\,\mathrm{d}\mu_{\mathcal{X}}(\mathbf{x})=1,
\qquad
\int_{\mathcal{X}}\|\mathbf{x}\|^{2}\,\pi_{\mathcal{X}}^{f}(\mathbf{x})\,\mathrm{d}\mu_{\mathcal{X}}(\mathbf{x})<\infty.
\end{equation}
The associated probability measure $P_{\mathcal{X}}$ is defined by
\begin{equation}
P_{\mathcal{X}}(A)
=
\int_A \pi_{\mathcal{X}}^{f}(\mathbf{x})\,\mathrm{d}\mu_{\mathcal{X}}(\mathbf{x}),
\qquad
\forall A\in\mathcal{B}_{\mathcal{X}}.
\end{equation}
A forecast ensemble $\{\mathbf{x}^{(j),f}\}_{j=1}^N$ is assumed to consist of independent and identically distributed draws from $P_{\mathcal{X}}$.
\end{definition}

In practice, $P_{\mathcal{X}}$ represents the algorithmic forecast distribution (e.g., the Gaussian approximation constructed by the assimilation method) rather than the true invariant measure of the Lorenz--96 attractor, which is singular with respect to Lebesgue measure. The i.i.d.\ sampling assumption is an idealization: in sequential filtering, ensemble members become statistically dependent through repeated cycling. The assumption is invoked here to enable tractable analysis of sampling error and is understood as an approximation valid when ensemble diversity is actively maintained (e.g., through inflation or spectral regularization).
\par\smallskip

\begin{assumption}[Gaussian forecast model]
\label{ass:gaussian_forecast}
When explicit covariance and higher-moment identities are required, we specialize to the Gaussian case. Fix a window index $w$ and consider the forecast distribution at the window-initial time $k_0(w)$. We assume
\begin{equation}
P_{\mathcal{X}}=\mathcal{N}(\boldsymbol{\mu}_{k_0(w)}^{f},\mathbf{P}^{f}),
\end{equation}
so that $\mathbb{E}[\mathbf{x}_{k_0(w)}^{(j),f}]=\boldsymbol{\mu}_{k_0(w)}^{f}$ and $\mathrm{Cov}(\mathbf{x}_{k_0(w)}^{(j),f})=\mathbf{P}^{f}$.
This assumption is invoked only when necessary and does not otherwise restrict the general formulation.
\end{assumption}

\begin{definition}[Observation space and quantity-of-interest map]
\label{def:data_space}
Let $\mathcal{D}=\mathbb{R}^{d}$ with $d:=mL$ denote the stacked observation space, equipped with its Borel $\sigma$-algebra $\mathcal{B}_{\mathcal{D}}$ and Lebesgue measure $\mu_{\mathcal{D}}$. The stacked observation vector $\mathbf{z}^{(w)}$ defined in \eqref{eq:stacked_obs} is an element of $\mathcal{D}$. A quantity-of-interest (QoI) map is any measurable function $Q:\mathcal{X}\to\mathcal{D}$. Throughout the theoretical analysis, we work with a tangent-linear approximation of $Q$, represented by a stacked observation operator $\mathbf{H}^{(L)}\in\mathbb{R}^{d\times n}$:
\begin{equation}
Q(\mathbf{x}_0)\approx\mathbf{H}^{(L)}\mathbf{x}_0
=
\begin{bmatrix}
\mathbf{H}\mathbf{M}_1\\
\mathbf{H}\mathbf{M}_2\\
\vdots\\
\mathbf{H}\mathbf{M}_L
\end{bmatrix}
\mathbf{x}_0
\in\mathbb{R}^{d},
\end{equation}
where $\mathbf{M}_\ell\in\mathbb{R}^{n\times n}$ is the tangent linear model (TLM) obtained by linearizing the nonlinear dynamics $\mathcal{M}$ about a reference trajectory $\{\bar{\mathbf{x}}_\ell\}_{\ell=0}^{L}$ (typically the ensemble mean trajectory), so that $\mathbf{M}_\ell := \frac{\partial}{\partial\mathbf{x}_0}\mathcal{M}^{(\ell)}(\mathbf{x}_0)\big|_{\mathbf{x}_0=\bar{\mathbf{x}}_0}$ maps perturbations at the window-initial time to perturbations at time $\ell$. 
\end{definition}

This tangent-linear formulation permits closed-form expressions for population covariance propagation and underpins the spectral analysis provided in 
provided in ~\ref{sec:concentration_spectral}. The approximation is exact for linear dynamics and linear observation operators; for nonlinear dynamics, it is valid to first order in perturbations about the reference trajectory. The numerical experiments (Section~\ref{sec:motivation}) propagate the full nonlinear model $\mathcal{M}$ and then apply the linear observation operator $\mathbf{H}$; the theoretical results thus characterize algorithm behavior in the tangent-linear regime, which provides a useful approximation when ensemble spread is small relative to the scale of nonlinearity.
\begin{definition}[Convergence of approximate QoI maps]
\label{def:map_convergence}
A sequence of measurable maps $\{Q_s\}_{s\ge 1}$, with $Q_s:\mathcal{X}\to\mathcal{D}$, is said to converge to $Q$ in $L^2(P_{\mathcal{X}})$ if
\begin{align*}
\|Q_s-Q\|_{L^2(P_{\mathcal{X}})}^2
&:=
\mathbb{E}_{P_{\mathcal{X}}}\!\left[\|Q_s(\mathbf{x})-Q(\mathbf{x})\|^2\right]\\
&=
\int_{\mathcal{X}}\|Q_s(\mathbf{x})-Q(\mathbf{x})\|^2\,
\pi_{\mathcal{X}}^{f}(\mathbf{x})\,\mathrm{d}\mu_{\mathcal{X}}(\mathbf{x})
\longrightarrow 0
\quad\text{as } s\to\infty.
\end{align*}
\end{definition}
This notion of convergence will be used to formalize consistency of approximate observation operators and surrogate models. The index $s$ (for ``surrogate'' or ``approximation stage'') is distinct from the within-window time index $\ell\in\{1,\ldots,L\}$ used elsewhere in this paper.
The state space $\mathcal{X}$ and observation space $\mathcal{D}$ correspond to the parameter space $\Lambda$ and data space $D$, respectively, in~\cite{butlerwildeyzhang2022}. Likewise, the forecast density $\pi_{\mathcal{X}}^{f}$ corresponds to $\pi_{\Lambda}$ in that notation, and the maps $Q_s:\mathcal{X}\to\mathcal{D}$ play the role of the parameter-to-QoI maps considered therein. For approximate QoI maps $Q_s$ (where $s$ denotes the approximation stage, distinct from the within-window time index $\ell$), define the corresponding whitened residuals and their population moments by
\begin{equation}
\mathbf{e}^{(j),s}:=(\mathbf{R}^{(L)})^{-1/2}\bigl(Q_s(\mathbf{x}^{(j),f})-\mathbf{z}^{(w)}\bigr),
\qquad
\boldsymbol{\mu}_E^{\,s}:=\mathbb{E}[\mathbf{e}^{(j),s}],
\qquad
\boldsymbol{\Sigma}_E^{\,s}:=\mathrm{Cov}(\mathbf{e}^{(j),s}).
\end{equation}
When $Q_s=Q$, these reduce to $\mathbf{e}^{(j)}$, $\boldsymbol{\mu}_E$, and $\boldsymbol{\Sigma}_E$.
For each fixed $s$, we also define the sample mean and sample covariance of the $s$-residuals by
\begin{equation}
\bar{\mathbf{e}}^{\,s}:=\frac{1}{N}\sum_{j=1}^N \mathbf{e}^{(j),s},
\qquad
\mathbf{C}_E^{\,s}:=\frac{1}{N-1}\sum_{j=1}^N(\mathbf{e}^{(j),s}-\bar{\mathbf{e}}^{\,s})(\mathbf{e}^{(j),s}-\bar{\mathbf{e}}^{\,s})^\top.
\end{equation}
Let $\mathbf{P}_\kappa^{\,s}:=\mathbf{P}_\kappa(\boldsymbol{\Sigma}_E^{\,s})$ and $\hat{\mathbf{P}}_\kappa^{\,s}:=\mathbf{P}_\kappa(\mathbf{C}_E^{\,s})$ denote the corresponding population and empirical rank-$\kappa$ projectors (Definition~\ref{def:projectors_truncation}).

\subsection{Main theorem: bias--variance decomposition}
\label{subsec:main_bv}

We now combine the elementary bias--variance identity of Lemma~\ref{lem:mean_error_decomp} with the deterministic projector stability bounds of Lemma~\ref{lem:covariance_concentration} to obtain a decomposition and explicit estimates for the mean-squared error of the QPCA-EnDCF analysis mean. The resulting bounds isolate three effects: (i) truncation bias induced by restricting the correction to a rank-$\kappa$ subspace, (ii) approximation error arising from using an approximate QoI map $Q_s$, and (iii) sampling error associated with estimating the projector from a finite ensemble. For an estimator $\bar{\mathbf{x}}^a$ of the true state $\mathbf{x}^{\mathrm{true}}$ at a fixed assimilation time, the MSE decomposes as
\begin{equation}
\mathrm{MSE}
=
\mathbb{E}\!\left[\|\bar{\mathbf{x}}^a-\mathbf{x}^{\mathrm{true}}\|^2\right]
=
\mathrm{Bias}^2+\mathrm{Variance},
\label{eq:mse_decomposition_empirical}
\end{equation}
where the expectation is taken over independent ensemble realizations (different initial ensemble draws) with the true trajectory and observations held fixed. The bias term
$\mathrm{Bias}^2=\|\mathbb{E}[\bar{\mathbf{x}}^a]-\mathbf{x}^{\mathrm{true}}\|^2$
captures systematic error, while
$\mathrm{Variance}=\mathbb{E}[\|\bar{\mathbf{x}}^a-\mathbb{E}[\bar{\mathbf{x}}^a]\|^2]$
quantifies sampling-induced fluctuations. Bias reflects deterministic algorithmic limitations; variance is stochastic and constrained by ensemble size.

In the sequential setting where truth and observations evolve over many assimilation cycles, the decomposition \eqref{eq:mse_decomposition_empirical} applies at each assimilation time (here, at each window endpoint). We therefore estimate bias and variance as functions of window index and then time-average over windows to obtain representative values for a method.
\begin{definition}[Bias Variance Hypotheses]
\label{def:bias_variance_hypotheses}
Consider QPCA-EnDCF with truncation level $\kappa$ and empirical gain
$\mathbf{K}^{\mathrm{DC}}=\mathbf{P}_{xz}\mathbf{P}_{zz}^{\dagger}$ as in Definition~\ref{def:endcf_gain_ensemble}. Let
\(
\bar{\mathbf{x}}^{a,s}=\frac{1}{N}\sum_{j=1}^N\mathbf{x}^{(j),a,s}
\)
denote the analysis ensemble mean (where the superscript $s$ indexes the approximation stage). Assume Assumptions~\ref{ass:regularity_bias_variance} and~\ref{ass:gain_moment}. Suppose that the population covariance $\boldsymbol{\Sigma}_E^{\,s}$ of the $s$-residuals has a cutoff gap
\begin{equation}
\label{eq:gap_cutoff_bv}
\lambda_\kappa^{\,s}-\lambda_{\kappa+1}^{\,s}\ge \delta_\kappa>0,
\qquad (\lambda_{d+1}:=0,\ d=mL),
\end{equation}
where $\{\lambda_i^{\,s}\}_{i=1}^d$ are the eigenvalues of $\boldsymbol{\Sigma}_E^{\,s}$ in decreasing order.
Then the mean-squared error admits the exact decomposition
\begin{equation}
\mathbb{E}\!\left[\|\bar{\mathbf{x}}^{a,s}-\mathbf{x}^{\mathrm{true}}\|^2\right]
=
\mathrm{Bias}^2(\kappa,s)+\mathrm{Var}(N,\kappa,s),
\end{equation}
where
\begin{equation}
\mathrm{Bias}^2(\kappa,s):=\bigl\|\mathbb{E}[\bar{\mathbf{x}}^{a,s}]-\mathbf{x}^{\mathrm{true}}\bigr\|^2,
\qquad
\mathrm{Var}(N,\kappa,s):=\mathbb{E}\!\left[\|\bar{\mathbf{x}}^{a,s}-\mathbb{E}[\bar{\mathbf{x}}^{a,s}]\|^2\right].
\end{equation}

\end{definition}

\begin{theorem}[Bias Decomposition with Spectral Truncation]
\label{thm:bias_variance}
	Under the Bias Variance Hypotheses~\ref{def:bias_variance_hypotheses}, the bias satisfies
\begin{equation}
		\mathrm{Bias}_{\mathrm{base}}^{2}(s)
		:=
		\bigl\|\boldsymbol{\mu}_{k_w}^f-\mathbf{x}^{\mathrm{true}}-\mathbb{E}[\mathbf{K}^{\mathrm{DC}}](\mathbf{R}^{(L)})^{1/2}\boldsymbol{\mu}_E^{\,s}\bigr\|^2,
\end{equation}
	\begin{align}
	\mathrm{Bias}^2(\kappa,s)
	&\le
	2\,\mathrm{Bias}_{\mathrm{base}}^{2}(s)
	+
	4\,\mathbb{E}\!\left[\|\mathbf{K}^{\mathrm{DC}}\|_2^2\right]\,
	\|\mathbf{R}^{(L)}\|_2\,
	\bigl\|(\mathbf{I}-\mathbf{P}_\kappa^{\,s})\boldsymbol{\mu}_E^{\,s}\bigr\|^2
	\;+\;
	2\,C_{\mathrm{m}}\,
	\|Q_s-Q\|_{L^2(P_{\mathcal{X}})}^2
	\nonumber\\
		&\hspace{3.2em}
		+\;
		\frac{2\|\mathbf{R}^{(L)}\|_2}{N}\,
		\mathbb{E}\!\left[\|\mathbf{K}^{\mathrm{DC}}\|_2^2\right]\,
		\mathrm{tr}\!\bigl(\boldsymbol{\Sigma}_E^{\,s}\bigr)
		\nonumber\\
		&\hspace{3.2em}
		+\;
		2\,C_{\mathrm{p}}\,
		\frac{\kappa}{\delta_\kappa^2}\,
		\mathbb{E}\!\left[\|\mathbf{C}_E^{\,s}-\boldsymbol{\Sigma}_E^{\,s}\|_F^2\right],
		\label{eq:bias_bound_fixed_main}
		\end{align}
for constants $C_{\mathrm{m}},C_{\mathrm{p}}>0$ independent of $N$.
\end{theorem}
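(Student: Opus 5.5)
We start from the explicit form of the analysis mean. Averaging the member updates $\mathbf{x}^{(j),a,s}=\mathbf{x}_{k_w}^{(j),f}-\mathbf{K}^{\mathrm{DC}}(\mathbf{R}^{(L)})^{1/2}\hat{\mathbf{P}}_\kappa^{\,s}\mathbf{e}^{(j),s}$ over $j$ gives
\[
\bar{\mathbf{x}}^{a,s}=\bar{\mathbf{x}}_{k_w}^f-\mathbf{K}^{\mathrm{DC}}(\mathbf{R}^{(L)})^{1/2}\hat{\mathbf{P}}_\kappa^{\,s}\bar{\mathbf{e}}^{\,s}.
\]
This holds because the projector is common to all members. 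Taking expectations and subtracting $\mathbf{x}^{\mathrm{true}}$, we add and subtract the reference vector $\mathbb{E}[\mathbf{K}^{\mathrm{DC}}](\mathbf{R}^{(L)})^{1/2}\boldsymbol{\mu}_E^{\,s}$. The bias vector then splits as $\mathbf{b}_{\mathrm{base}}+\mathbf{b}_{\mathrm{trunc}}$. The first piece is
\[
\mathbf{b}_{\mathrm{base}}=\boldsymbol{\mu}_{k_w}^f-\mathbf{x}^{\mathrm{true}}-\mathbb{E}[\mathbf{K}^{\mathrm{DC}}](\mathbf{R}^{(L)})^{1/2}\boldsymbol{\mu}_E^{\,s},
\]
and the second is
\[
\mathbf{b}_{\mathrm{trunc}}=\mathbb{E}\bigl[\mathbf{K}^{\mathrm{DC}}(\mathbf{R}^{(L)})^{1/2}(\mathbf{I}-\hat{\mathbf{P}}_\kappa^{\,s}\,\bar{\mathbf{e}}^{\,s}\,\boldsymbol{\mu}_E^{\,s\,-1})\bigr],
\]
written loosely; concretely it is $\mathbb{E}[\mathbf{K}^{\mathrm{DC}}(\mathbf{R}^{(L)})^{1/2}(\boldsymbol{\mu}_E^{\,s}-\hat{\mathbf{P}}_\kappa^{\,s}\bar{\mathbf{e}}^{\,s})]$. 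The inequality $\|\mathbf{a}+\mathbf{b}\|^2\le 2\|\mathbf{a}\|^2+2\|\mathbf{b}\|^2$ produces the leading $2\,\mathrm{Bias}_{\mathrm{base}}^2(s)$ term. Next we apply Jensen, $\|\mathbb{E}\mathbf{Y}\|^2\le\mathbb{E}\|\mathbf{Y}\|^2$, and Cauchy--Schwarz to the second piece. This yields a factor $\mathbb{E}\|\mathbf{K}^{\mathrm{DC}}\|_2^2\,\|\mathbf{R}^{(L)}\|_2$ multiplying second moments of $\boldsymbol{\mu}_E^{\,s}-\hat{\mathbf{P}}_\kappa^{\,s}\bar{\mathbf{e}}^{\,s}$. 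Assumption~\ref{ass:gain_moment} is what allows the gain moment to be factored out: it either decouples the gain from the residual term, or the gain is bounded uniformly so that Hölder applies.

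We then decompose
\[
\boldsymbol{\mu}_E^{\,s}-\hat{\mathbf{P}}_\kappa^{\,s}\bar{\mathbf{e}}^{\,s}
=(\mathbf{I}-\mathbf{P}_\kappa^{\,s})\boldsymbol{\mu}_E^{\,s}
+(\mathbf{P}_\kappa^{\,s}-\hat{\mathbf{P}}_\kappa^{\,s})\boldsymbol{\mu}_E^{\,s}
+\hat{\mathbf{P}}_\kappa^{\,s}(\boldsymbol{\mu}_E^{\,s}-\bar{\mathbf{e}}^{\,s}),
\]
and bound each piece separately.
\begin{enumerate}
\item The first piece is deterministic truncation bias. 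It gives the term $4\,\mathbb{E}\|\mathbf{K}^{\mathrm{DC}}\|_2^2\|\mathbf{R}^{(L)}\|_2\|(\mathbf{I}-\mathbf{P}_\kappa^{\,s})\boldsymbol{\mu}_E^{\,s}\|^2$; the factor $4=2\cdot2$ arises from a further splitting.
\item The third piece uses $\|\hat{\mathbf{P}}_\kappa^{\,s}\|_2\le1$ and the i.i.d.\ identity $\mathbb{E}\|\bar{\mathbf{e}}^{\,s}-\boldsymbol{\mu}_E^{\,s}\|^2=\mathrm{tr}(\boldsymbol{\Sigma}_E^{\,s})/N$. This gives the $\frac{2\|\mathbf{R}^{(L)}\|_2}{N}\mathbb{E}\|\mathbf{K}^{\mathrm{DC}}\|_2^2\,\mathrm{tr}(\boldsymbol{\Sigma}_E^{\,s})$ term.
\item The second piece is projector-estimation error. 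We invoke Lemma~\ref{lem:covariance_concentration}, a Davis--Kahan-type bound under the gap \eqref{eq:gap_cutoff_bv}, in the form $\|\hat{\mathbf{P}}_\kappa^{\,s}-\mathbf{P}_\kappa^{\,s}\|_F^2\le C\kappa\|\mathbf{C}_E^{\,s}-\boldsymbol{\Sigma}_E^{\,s}\|_F^2/\delta_\kappa^2$, possibly via the $\sqrt{2\kappa}$ operator-to-Frobenius conversion. Taking expectations gives the $2C_{\mathrm{p}}\frac{\kappa}{\delta_\kappa^2}\mathbb{E}\|\mathbf{C}_E^{\,s}-\boldsymbol{\Sigma}_E^{\,s}\|_F^2$ term. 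The prefactors $\|\mu_E^s\|^2$, $\|\mathbf{R}^{(L)}\|_2$ and the gain moment are absorbed into $C_{\mathrm{p}}$, using Assumption~\ref{ass:regularity_bias_variance} to bound them uniformly in $N$.
\end{enumerate}

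The approximation term $2C_{\mathrm{m}}\|Q_s-Q\|_{L^2}^2$ enters where the base bias is compared to the exact-map quantities, or where the analysis mean built from the implemented residuals $Q$ is compared to the $Q_s$-residual version. Concretely, we write $\bar{\mathbf{e}}-\bar{\mathbf{e}}^{\,s}=(\mathbf{R}^{(L)})^{-1/2}\frac1N\sum_j(Q-Q_s)(\mathbf{x}^{(j),f})$. Jensen then bounds its second moment by $\|\mathbf{R}^{(L)-1}\|_2\|Q_s-Q\|_{L^2(P_{\mathcal{X}})}^2$, multiplied by the gain moment, and these factors are absorbed into $C_{\mathrm{m}}$. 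The reason for the extra factor is that we actually use a three-way split $\|\mathbf{a}+\mathbf{b}+\mathbf{c}\|^2$ with weights chosen to reproduce the stated constants.

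The main obstacle is the projector step. The bound must be independent of $N$ apart from the $\mathbb{E}\|\mathbf{C}_E^{\,s}-\boldsymbol{\Sigma}_E^{\,s}\|_F^2$ factor, and $\hat{\mathbf{P}}_\kappa^{\,s}$ is correlated with both $\mathbf{K}^{\mathrm{DC}}$ and $\bar{\mathbf{e}}^{\,s}$. The first thing to try is to use the deterministic (pathwise) Davis--Kahan inequality from Lemma~\ref{lem:covariance_concentration}, which holds without any independence. The gain's randomness is then handled through Cauchy--Schwarz together with the moment bound of Assumption~\ref{ass:gain_moment}. If that fails, the fallback is to assume a uniform operator-norm bound on $\mathbf{K}^{\mathrm{DC}}$ and absorb it into $C_{\mathrm{p}}$.
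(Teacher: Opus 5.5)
Your skeleton is the paper's: the same averaged update, the same subtraction of $\mathbb{E}[\mathbf{K}^{\mathrm{DC}}](\mathbf{R}^{(L)})^{1/2}\boldsymbol{\mu}_E^{\,s}$ to isolate $\mathrm{Bias}_{\mathrm{base}}^2(s)$, the same three pieces, the pathwise Davis--Kahan bound of Lemma~\ref{lem:covariance_concentration}(iii), and the same absorption of $\|\boldsymbol{\mu}_E^{\,s}\|^2$, $\|\mathbf{R}^{(L)}\|_2$ and the gain moment into $C_{\mathrm{p}}$.

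\textbf{The step that fails: the constants.} You claim that weights in a three-way split reproduce the stated constants. An inequality $\|\mathbf{a}+\mathbf{b}+\mathbf{c}\|^2\le c_1\|\mathbf{a}\|^2+c_2\|\mathbf{b}\|^2+c_3\|\mathbf{c}\|^2$ that holds for all vectors forces $c_1^{-1}+c_2^{-1}+c_3^{-1}\le 1$. The outer factor is at least $2$, because the base term carries coefficient $2$. You would therefore need $c_1\le 2$ on the truncation piece and $c_3\le 1$ on the sample-mean piece, and then $c_1^{-1}+c_3^{-1}\ge 3/2>1$.

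The missing idea is orthogonality. Pointwise,
\[
\boldsymbol{\mu}_E^{\,s}-\hat{\mathbf{P}}_\kappa^{\,s}\bar{\mathbf{e}}^{\,s}=(\mathbf{I}-\hat{\mathbf{P}}_\kappa^{\,s})\boldsymbol{\mu}_E^{\,s}+\hat{\mathbf{P}}_\kappa^{\,s}(\boldsymbol{\mu}_E^{\,s}-\bar{\mathbf{e}}^{\,s}),
\]
and the two summands lie in the mutually orthogonal ranges of $\mathbf{I}-\hat{\mathbf{P}}_\kappa^{\,s}$ and $\hat{\mathbf{P}}_\kappa^{\,s}$. Apply Pythagoras, then split only the first summand:
\[
\|(\mathbf{I}-\hat{\mathbf{P}}_\kappa^{\,s})\boldsymbol{\mu}_E^{\,s}\|^2\le 2\|(\mathbf{I}-\mathbf{P}_\kappa^{\,s})\boldsymbol{\mu}_E^{\,s}\|^2+2\|(\hat{\mathbf{P}}_\kappa^{\,s}-\mathbf{P}_\kappa^{\,s})\boldsymbol{\mu}_E^{\,s}\|^2 .
\]
This bounds the second moment of the remainder by
\[
2\|(\mathbf{I}-\mathbf{P}_\kappa^{\,s})\boldsymbol{\mu}_E^{\,s}\|^2+2\|\boldsymbol{\mu}_E^{\,s}\|^2\,\mathbb{E}\|\hat{\mathbf{P}}_\kappa^{\,s}-\mathbf{P}_\kappa^{\,s}\|_F^2+\mathrm{tr}(\boldsymbol{\Sigma}_E^{\,s})/N .
\]
Multiplying by $2\|\mathbf{R}^{(L)}\|_2\,\mathbb{E}\|\mathbf{K}^{\mathrm{DC}}\|_2^2$ gives exactly the stated coefficient $4$, the stated $2/N$ term, and the projector term.

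This works only because you apply Cauchy--Schwarz to the whole remainder before splitting. The paper splits into three separate expectations first, and there the random gain destroys the orthogonality. Bounding those expectations separately cannot produce the stated constants, and the paper's written proof skips this step silently.

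\textbf{Two smaller points.}

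First, the $C_{\mathrm{m}}$ term needs no argument. The objects $\bar{\mathbf{x}}^{a,s}$, $\boldsymbol{\mu}_E^{\,s}$, $\mathbf{P}_\kappa^{\,s}$ and $\mathbf{C}_E^{\,s}$ are all built from $Q_s$, so the decomposition closes without reference to $Q$. The term $2C_{\mathrm{m}}\|Q_s-Q\|_{L^2(P_{\mathcal{X}})}^2\ge 0$ is therefore pure slack; the paper only cites a continuity framework for it. Inserting your $\bar{\mathbf{e}}-\bar{\mathbf{e}}^{\,s}$ estimate as a genuine fourth piece would break the constants again.

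Second, Cauchy--Schwarz needs no decoupling of the gain from the residuals. Assumption~\ref{ass:gain_moment} only guarantees finiteness at the given $N$. Neither it nor Assumption~\ref{ass:regularity_bias_variance}, which says nothing about the gain, makes $\mathbb{E}\|\mathbf{K}^{\mathrm{DC}}\|_2^2$ bounded uniformly in $N$. Calling $C_{\mathrm{p}}$ independent of $N$ therefore tacitly requires $\sup_N\mathbb{E}\|\mathbf{K}^{\mathrm{DC}}\|_2^2<\infty$. The paper makes this assumption too, without stating it. State it explicitly rather than attributing it to the regularity assumption.
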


\noindent\textit{Proof.} See~\ref{app:proof_bias_variance}.

\begin{theorem}[Variance Decomposition with Spectral Truncation]
\label{thm:variance_decomposition}Under the Bias Variance Hypotheses~\ref{def:bias_variance_hypotheses}, the variance satisfies
\begin{align}
\mathrm{Var}(N,\kappa,s)
&\le
\frac{2}{N}\,\mathbb{E}\!\left[\|\mathbf{x}_{k_w}^{(1),f}-\boldsymbol{\mu}_{k_w}^f\|^2\right]
+
\frac{2\|\mathbf{R}^{(L)}\|_2}{N}\,
\mathbb{E}\!\left[\|\mathbf{K}^{\mathrm{DC}}\|_2^2\right]\,
\mathrm{tr}\!\bigl(\boldsymbol{\Sigma}_E^{\,s}\bigr)
\nonumber\\
&\hspace{3.2em}
+\;
2\|\mathbf{R}^{(L)}\|_2\,
\mathbb{E}\!\left[\|\mathbf{K}^{\mathrm{DC}}\|_2^2\,
\|(\hat{\mathbf{P}}_\kappa^{\,s}-\mathbf{P}_\kappa^{\,s})\,(\bar{\mathbf{e}}^{\,s}-\boldsymbol{\mu}_E^{\,s})\|^2\right],
\label{eq:var_bound_fixed_main_raw}
\end{align}
Moreover, by Cauchy--Schwarz and Lemma~\ref{lem:covariance_concentration}(iii) applied with $(\boldsymbol{\Sigma}_E,\mathbf{C}_E)$ replaced by $(\boldsymbol{\Sigma}_E^{\,s},\mathbf{C}_E^{\,s})$, the projector factor admits the marginal bound:
\begin{align}
\mathbb{E}\!\left[\|(\hat{\mathbf{P}}_\kappa^{\,s}-\mathbf{P}_\kappa^{\,s})\,(\bar{\mathbf{e}}^{\,s}-\boldsymbol{\mu}_E^{\,s})\|^2\right]
&\le
\Bigl(\mathbb{E}\|\hat{\mathbf{P}}_\kappa^{\,s}-\mathbf{P}_\kappa^{\,s}\|_F^4\Bigr)^{1/2}
\Bigl(\mathbb{E}\|\bar{\mathbf{e}}^{\,s}-\boldsymbol{\mu}_E^{\,s}\|^4\Bigr)^{1/2}
\nonumber\\
&\le
\frac{8\kappa}{\delta_\kappa^2}\,
\Bigl(\mathbb{E}\|\mathbf{C}_E^{\,s}-\boldsymbol{\Sigma}_E^{\,s}\|_F^4\Bigr)^{1/2}
\Bigl(\mathbb{E}\|\bar{\mathbf{e}}^{\,s}-\boldsymbol{\mu}_E^{\,s}\|^4\Bigr)^{1/2}.
\label{eq:var_projector_term_bound_main}
\end{align}
We record \eqref{eq:var_projector_term_bound_main} separately because it does not, by itself, control the gain-weighted joint term in \eqref{eq:var_bound_fixed_main_raw} without additional assumptions on $\mathbf{K}^{\mathrm{DC}}$ beyond second-moment finiteness.
\end{theorem}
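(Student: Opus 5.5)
Starting from the QPCA-EnDCF update, the analysis mean is
\[
\bar{\mathbf{x}}^{a,s}=\bar{\mathbf{x}}_{k_w}^{f}-\mathbf{K}^{\mathrm{DC}}(\mathbf{R}^{(L)})^{1/2}\hat{\mathbf{P}}_\kappa^{\,s}\bar{\mathbf{e}}^{\,s}.
\]
It follows from averaging the columns of \eqref{eq:qpca_delta_obs} after mapping through $\mathbf{K}^{\mathrm{DC}}$, since the update is linear in $\mathbf{E}$. I will subtract its expectation and split the fluctuation into three pieces:
\begin{align*}
\bar{\mathbf{x}}^{a,s}-\mathbb{E}[\bar{\mathbf{x}}^{a,s}]
&= \bigl(\bar{\mathbf{x}}_{k_w}^{f}-\boldsymbol{\mu}_{k_w}^{f}\bigr)
-\bigl(\mathbf{K}^{\mathrm{DC}}(\mathbf{R}^{(L)})^{1/2}\mathbf{P}_\kappa^{\,s}(\bar{\mathbf{e}}^{\,s}-\boldsymbol{\mu}_E^{\,s})\bigr)\\
&\quad -\bigl(\mathbf{K}^{\mathrm{DC}}(\mathbf{R}^{(L)})^{1/2}(\hat{\mathbf{P}}_\kappa^{\,s}-\mathbf{P}_\kappa^{\,s})(\bar{\mathbf{e}}^{\,s}-\boldsymbol{\mu}_E^{\,s})\bigr)
-\mathbf{r},
\end{align*}
where $\mathbf{r}$ collects the remaining terms, which depend on $\boldsymbol{\mu}_E^{\,s}$ and on $\mathbf{K}^{\mathrm{DC}}$ minus its mean. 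Since the variance is the minimal mean-square deviation from a constant, I will compare against a convenient centering and absorb $\mathbf{r}$ into the bias bookkeeping, as is done in Lemma~\ref{lem:mean_error_decomp}. The plan is to invoke that lemma for this split, so that only the three displayed pieces remain.

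Next I will apply $\|a+b\|^2\le 2\|a\|^2+2\|b\|^2$, grouping the first piece against the remaining two. The displayed constants suggest a factor 2 on each term, so I will arrange the grouping to reproduce them.
\begin{itemize}
\item The first piece is a mean of $N$ i.i.d.\ centered vectors (Definition~\ref{def:parameter_space}). Its second moment is exactly $N^{-1}\mathbb{E}\|\mathbf{x}_{k_w}^{(1),f}-\boldsymbol{\mu}_{k_w}^f\|^2$.
\item For the second piece I will bound $\|\mathbf{K}^{\mathrm{DC}}(\mathbf{R}^{(L)})^{1/2}\mathbf{P}_\kappa^{\,s}\mathbf{u}\|^2\le\|\mathbf{K}^{\mathrm{DC}}\|_2^2\|\mathbf{R}^{(L)}\|_2\|\mathbf{u}\|^2$, using $\|\mathbf{P}_\kappa^{\,s}\|_2\le1$. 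This needs $\mathbb{E}[\|\mathbf{K}^{\mathrm{DC}}\|_2^2\|\bar{\mathbf{e}}^{\,s}-\boldsymbol{\mu}_E^{\,s}\|^2]$ to factor as $\mathbb{E}\|\mathbf{K}^{\mathrm{DC}}\|_2^2\cdot\mathrm{tr}(\boldsymbol{\Sigma}_E^{\,s})/N$. I expect to justify this factorization through Assumption~\ref{ass:gain_moment}, or through the decoupling built into Assumption~\ref{ass:regularity_bias_variance}, or else by treating the gain as independent of the sample mean (for Gaussian draws the sample mean is independent of the sample covariances).
\item The third piece is left in gain-weighted joint form, which is exactly the third term of \eqref{eq:var_bound_fixed_main_raw}.
\end{itemize}

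For \eqref{eq:var_projector_term_bound_main}, I will first use $\|\mathbf{A}\mathbf{u}\|\le\|\mathbf{A}\|_F\|\mathbf{u}\|$ and then Cauchy--Schwarz in expectation. This gives the product of fourth-moment square roots. Then I will apply Lemma~\ref{lem:covariance_concentration}(iii), in its Davis--Kahan form $\|\hat{\mathbf{P}}_\kappa-\mathbf{P}_\kappa\|_F\le 2\sqrt{2\kappa}\,\|\mathbf{C}_E-\boldsymbol{\Sigma}_E\|_2/\delta_\kappa$ (bounded further by the Frobenius norm), under the gap \eqref{eq:gap_cutoff_bv}. Raising this to the fourth power yields $64\kappa^2\delta_\kappa^{-4}\mathbb{E}\|\mathbf{C}_E^{\,s}-\boldsymbol{\Sigma}_E^{\,s}\|_F^4$, and taking the square root gives the constant $8\kappa/\delta_\kappa^2$.

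The main obstacle is the correlation between $\mathbf{K}^{\mathrm{DC}}$ and $\bar{\mathbf{e}}^{\,s}$ in the second term. I would first try the Gaussian independence of sample mean and sample covariance, noting that $\mathbf{K}^{\mathrm{DC}}$ depends only on the anomalies. Failing that, I would fall back on whatever decoupling the regularity assumptions provide.
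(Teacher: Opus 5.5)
\textbf{There is a genuine gap.} For the first display you follow the paper's route: subtract the mean, apply Young's inequality, use the two $1/N$ identities. The step that reduces the fluctuation to your three pieces is invalid.

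Write $\mathbf{G}:=\mathbf{K}^{\mathrm{DC}}(\mathbf{R}^{(L)})^{1/2}\hat{\mathbf{P}}_\kappa^{\,s}$. Then $\bar{\mathbf{x}}^{a,s}=\bar{\mathbf{x}}_{k_w}^f-\mathbf{G}(\bar{\mathbf{e}}^{\,s}-\boldsymbol{\mu}_E^{\,s})-\mathbf{G}\boldsymbol{\mu}_E^{\,s}$, and the last term is random. Viewing the variance as the minimal mean-square deviation from a constant only lets you discard the \emph{deterministic} part of $\mathbf{r}$. Whatever constant you center at, the fluctuation $(\mathbf{G}-\mathbb{E}\mathbf{G})\boldsymbol{\mu}_E^{\,s}$ remains: this is the gain and empirical projector fluctuating while acting on the mean innovation. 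Lemma~\ref{lem:mean_error_decomp} splits an error into a constant plus a zero-mean part. It gives no license to move a random term into the bias.

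This term cannot be dominated by the three displayed terms, because the displayed inequality is false without it. Take $n=d=2$, $\mathbf{H}^{(L)}=\mathbf{R}^{(L)}=\mathbf{I}$, $\mathbf{X}_{k_w}^f=\mathbf{X}_{k_0(w)}^f$, a Gaussian forecast with $\mathbf{P}^f=\mathrm{diag}(2,1)$, $\kappa=1$ (so $\delta_1=1$), and $N\ge 3$.
\begin{itemize}
\item Then $\mathbf{K}^{\mathrm{DC}}=\mathbf{I}$ almost surely and $\bar{\mathbf{x}}^{a}=\boldsymbol{\mu}_{k_w}^f-\hat{\mathbf{P}}_1\boldsymbol{\mu}_E+(\mathbf{I}-\hat{\mathbf{P}}_1)(\bar{\mathbf{x}}_{k_w}^f-\boldsymbol{\mu}_{k_w}^f)$.
\item Independence of the sample mean and the sample covariance gives $\mathrm{Var}\ge\mathbb{E}\|(\hat{\mathbf{P}}_1-\mathbb{E}\hat{\mathbf{P}}_1)\boldsymbol{\mu}_E\|^2$. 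This grows like $t^2$ for $\boldsymbol{\mu}_E=t(1,1)^\top$, i.e.\ an observation far from the forecast mean.
\item The right-hand side is at most $18/N$ for every $\mathbf{z}^{(w)}$.
\end{itemize}
A correct bound needs an additional term such as $\|\boldsymbol{\mu}_E^{\,s}\|^2\,\mathbb{E}\|\mathbf{G}-\mathbb{E}\mathbf{G}\|_2^2$. Via Lemma~\ref{lem:covariance_concentration}(iii) and suitable gain moments, this yields a $\kappa\delta_\kappa^{-2}$-weighted projector contribution like the $C_{\mathrm{p}}$ term of Theorem~\ref{thm:bias_variance}, plus a gain-fluctuation contribution. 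The paper's one-line Young's-inequality step passes over this same remainder. The defect therefore sits in the stated bound, not in your choice of route.

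\textbf{Two smaller points.}
\begin{itemize}
\item The factorization $\mathbb{E}[\|\mathbf{K}^{\mathrm{DC}}\|_2^2\|\bar{\mathbf{e}}^{\,s}-\boldsymbol{\mu}_E^{\,s}\|^2]=\mathbb{E}\|\mathbf{K}^{\mathrm{DC}}\|_2^2\,\mathrm{tr}(\boldsymbol{\Sigma}_E^{\,s})/N$ is not supplied by the hypotheses. Assumption~\ref{ass:gain_moment} is a bare second moment, and Assumption~\ref{ass:regularity_bias_variance} gives i.i.d.\ draws with finite fourth moments but no decoupling. Your Gaussian argument works only if, in addition, the forecast is Gaussian and both $Q_s$ and the propagation to $k_w$ are linear, so that $\mathbf{K}^{\mathrm{DC}}$ depends on the centered anomalies alone. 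Otherwise Cauchy--Schwarz needs $\mathbb{E}\|\mathbf{K}^{\mathrm{DC}}\|_2^4$. The paper's appeal to the ``standard identities'' makes the same unstated factorization.
\item If you group the first piece against the other two and then split $\mathbf{P}_\kappa^{\,s}(\bar{\mathbf{e}}^{\,s}-\boldsymbol{\mu}_E^{\,s})$ from $(\hat{\mathbf{P}}_\kappa^{\,s}-\mathbf{P}_\kappa^{\,s})(\bar{\mathbf{e}}^{\,s}-\boldsymbol{\mu}_E^{\,s})$, you get constants $4$, not $2$. The stated constants come from keeping them together as $\hat{\mathbf{P}}_\kappa^{\,s}(\bar{\mathbf{e}}^{\,s}-\boldsymbol{\mu}_E^{\,s})$ and using $\|\hat{\mathbf{P}}_\kappa^{\,s}\|_2\le 1$. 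The third term is then a redundant nonnegative addition.
\end{itemize}

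Your proof of the second display is correct and is the paper's argument. Your population-gap Davis--Kahan constant $2\sqrt{2\kappa}/\delta_\kappa$ gives exactly $8\kappa/\delta_\kappa^2$, and the sharper constant claimed in Lemma~\ref{lem:covariance_concentration}(iii) would only improve it.
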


\noindent\textit{Proof.} See~\ref{app:proof_variance_decomposition}.

The leading truncation contribution in \eqref{eq:bias_bound_fixed_main} depends on
\(
\|(\mathbf{I}-\mathbf{P}_\kappa^{\,s})\boldsymbol{\mu}_E^{\,s}\|^2,
\)
that is, on the component of the mean whitened innovation discarded by truncation. Bounds of the form
\(
\|(\mathbf{I}-\mathbf{P}_\kappa^{\,s})\boldsymbol{\mu}_E^{\,s}\|^2\lesssim \sum_{i>\kappa}\lambda_i^{\,s}
\)
do not hold in general; they require additional structure linking $\boldsymbol{\mu}_E^{\,s}$ to $\boldsymbol{\Sigma}_E^{\,s}$, for example a source/RKHS-type condition such as $\|(\boldsymbol{\Sigma}_E^{\,s})^{-1/2}\boldsymbol{\mu}_E^{\,s}\|<\infty$.
Theorem~\ref{thm:bias_variance} decomposes the mean-squared error into: (i) a base analysis-bias term $\mathrm{Bias}_{\mathrm{base}}^{2}(s)$ reflecting residual error after the (untruncated) population mean correction is applied; (ii) additional truncation bias from restricting the correction to a $\kappa$-dimensional subspace; (iii) approximation error from using $Q_s$ in place of $Q$; and (iv) sampling variability, including the stability factor $\kappa/\delta_\kappa^2$ associated with estimating a rank-$\kappa$ projector from $N$ samples. The pseudoinverse gain is essential in the small-ensemble regime $N-1<d$, and Assumption~\ref{ass:gain_moment} is the only probabilistic requirement needed to justify the gain-dependent mean-square bounds.

\FloatBarrier


\subsection{Corollaries and Practical Implications}
\label{subsec:corollaries_implications}

Theorem~\ref{thm:bias_variance} identifies the mechanisms through which spectral truncation and finite-ensemble sampling affect the QPCA-EnDCF analysis mean, but some of its terms are easier to interpret after further specialization. The first corollary addresses the projector-estimation contribution in the Gaussian setting. Its role is to convert the abstract fourth-moment projector bound into an explicit rate for the term
\(
\mathbb{E}\!\left[\|(\hat{\mathbf{P}}_\kappa^{\,s}-\mathbf{P}_\kappa^{\,s})(\bar{\mathbf{e}}^{\,s}-\boldsymbol{\mu}_E^{\,s})\|^2\right],
\)
thereby making clear how the effective dimension $\kappa$, the cutoff gap $\delta_\kappa$, and the ensemble size $N$ enter the stability of the empirical low-rank correction. The second corollary serves a different purpose: it isolates the variance contribution created by stochastic observation perturbations in a windowed EnKF update and expresses that contribution explicitly in terms of $\mathbf{K}^{(w)}$ and $\mathbf{R}^{(L)}$. This provides the natural comparison point for QPCA-EnDCF. In the stochastic method, perturbed observations introduce an additional $\mathcal{O}(N^{-1})$ source of variability in the analysis mean; in QPCA-EnDCF, that specific term is absent, and the corresponding finite-ensemble variability instead enters through estimation of the rank-$\kappa$ projector.

\begin{corollary}[Scaling of the projector-estimation term (Gaussian case)]
\label{cor:projector_term_scaling_gaussian}
Under the Bias Variance Hypothesis~\ref{ass:regularity_bias_variance}, in addition, assume that the centered whitened residuals are Gaussian,
\begin{equation}
\mathbf{e}^{(j),s}-\boldsymbol{\mu}_E^{\,s}\stackrel{\mathrm{i.i.d.}}{\sim}\mathcal{N}(\mathbf{0},\boldsymbol{\Sigma}_E^{\,s}).
\end{equation}
Then there exists a constant $C>0$, depending only on the observation-space dimension $d=mL$, such that
\begin{multline*}
\mathbb{E}\!\left[\|(\hat{\mathbf{P}}_\kappa^{\,s}-\mathbf{P}_\kappa^{\,s})\,(\bar{\mathbf{e}}^{\,s}-\boldsymbol{\mu}_E^{\,s})\|^2\right]
\le
\frac{C\,\kappa}{\delta_\kappa^2}\,
\frac{1}{N(N-1)}\,
\mathrm{tr}\!\bigl((\boldsymbol{\Sigma}_E^{\,s})^2\bigr)\\
\times\left(\mathbb{E}\|\mathbf{e}^{(1),s}-\boldsymbol{\mu}_E^{\,s}\|^4+\bigl(\mathbb{E}\|\mathbf{e}^{(1),s}-\boldsymbol{\mu}_E^{\,s}\|^2\bigr)^2\right)^{1/2}.
\end{multline*}
In particular, for fixed $d$ this term is $\mathcal{O}(\kappa/(\delta_\kappa^2N^2))$.
\end{corollary}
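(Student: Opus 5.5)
The starting point is the marginal bound \eqref{eq:var_projector_term_bound_main} recorded in Theorem~\ref{thm:variance_decomposition}. It follows from Cauchy--Schwarz together with Lemma~\ref{lem:covariance_concentration}(iii) applied to $(\boldsymbol{\Sigma}_E^{\,s},\mathbf{C}_E^{\,s})$. The cutoff gap $\delta_\kappa$ in \eqref{eq:gap_cutoff_bv} is part of the hypotheses, so this bound is available. It reduces the claim to two Gaussian moment computations:
\[
\Bigl(\mathbb{E}\|\mathbf{C}_E^{\,s}-\boldsymbol{\Sigma}_E^{\,s}\|_F^4\Bigr)^{1/2}
\quad\text{and}\quad
\Bigl(\mathbb{E}\|\bar{\mathbf{e}}^{\,s}-\boldsymbol{\mu}_E^{\,s}\|^4\Bigr)^{1/2}.
\]
The target rate is $N^{-1/2}\cdot N^{-1/2}$ per factor inside the square roots, which after squaring gives the stated $1/(N(N-1))$ prefactor.

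\textbf{Mean factor.} Write $\mathbf{u}_j:=\mathbf{e}^{(j),s}-\boldsymbol{\mu}_E^{\,s}$ and $\bar{\mathbf{u}}=N^{-1}\sum_j\mathbf{u}_j$. Under the Gaussian hypothesis, $\bar{\mathbf{u}}\sim\mathcal{N}(\mathbf{0},\boldsymbol{\Sigma}_E^{\,s}/N)$. Hence
\[
\mathbb{E}\|\bar{\mathbf{u}}\|^4=N^{-2}\bigl((\mathrm{tr}\,\boldsymbol{\Sigma}_E^{\,s})^2+2\,\mathrm{tr}((\boldsymbol{\Sigma}_E^{\,s})^2)\bigr).
\]
Alternatively, without Gaussianity, expand the fourth moment of an i.i.d.\ average directly:
\[
\mathbb{E}\|\bar{\mathbf{u}}\|^4\le N^{-3}\,\mathbb{E}\|\mathbf{u}_1\|^4+3N^{-2}\bigl(\mathbb{E}\|\mathbf{u}_1\|^2\bigr)^2.
\]
This produces exactly the bracketed factor $\mathbb{E}\|\mathbf{e}^{(1),s}-\boldsymbol{\mu}_E^{\,s}\|^4+(\mathbb{E}\|\mathbf{e}^{(1),s}-\boldsymbol{\mu}_E^{\,s}\|^2)^2$ in the statement, up to a $1/N$ prefactor. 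Its square root contributes $N^{-1}$ times the square root of that bracket.

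\textbf{Covariance factor.} Write $\mathbf{C}_E^{\,s}$ in terms of the centered $\mathbf{u}_j$:
\[
\mathbf{C}_E^{\,s}=\frac{1}{N-1}\sum_j\mathbf{u}_j\mathbf{u}_j^\top-\frac{N}{N-1}\bar{\mathbf{u}}\bar{\mathbf{u}}^\top.
\]
Equivalently, by Gaussian rotation invariance, $(N-1)\mathbf{C}_E^{\,s}$ is Wishart $W_d(\boldsymbol{\Sigma}_E^{\,s},N-1)$. Use the Wishart representation $\mathbf{C}_E^{\,s}-\boldsymbol{\Sigma}_E^{\,s}=(N-1)^{-1}\sum_{i=1}^{N-1}(\mathbf{g}_i\mathbf{g}_i^\top-\boldsymbol{\Sigma}_E^{\,s})$ with $\mathbf{g}_i$ i.i.d.\ $\mathcal{N}(\mathbf{0},\boldsymbol{\Sigma}_E^{\,s})$. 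This is an average of i.i.d.\ mean-zero matrices in the Hilbert space $(\mathbb{R}^{d\times d},\|\cdot\|_F)$. The same fourth-moment expansion of an i.i.d.\ sum gives
\[
\mathbb{E}\|\mathbf{C}_E^{\,s}-\boldsymbol{\Sigma}_E^{\,s}\|_F^4\le\frac{3}{(N-1)^2}\bigl(\mathbb{E}\|\mathbf{Y}\|_F^2\bigr)^2+\frac{1}{(N-1)^3}\mathbb{E}\|\mathbf{Y}\|_F^4,
\]
where $\mathbf{Y}=\mathbf{g}\mathbf{g}^\top-\boldsymbol{\Sigma}_E^{\,s}$. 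Isserlis' formula gives $\mathbb{E}\|\mathbf{Y}\|_F^2=\mathrm{tr}((\boldsymbol{\Sigma}_E^{\,s})^2)+(\mathrm{tr}\,\boldsymbol{\Sigma}_E^{\,s})^2$. The eighth Gaussian moment in $\mathbb{E}\|\mathbf{Y}\|_F^4$ is bounded by a constant times $(\mathrm{tr}\,\boldsymbol{\Sigma}_E^{\,s})^4$.

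\textbf{Main obstacle: dimension constants.} The delicate step is expressing everything as $\mathrm{tr}((\boldsymbol{\Sigma}_E^{\,s})^2)$ times the stated bracket, with all remaining constants depending only on $d$. For this I would use the elementary inequalities $(\mathrm{tr}\,\boldsymbol{\Sigma})^2\le d\,\mathrm{tr}(\boldsymbol{\Sigma}^2)$ and $\mathrm{tr}(\boldsymbol{\Sigma}^2)\le(\mathrm{tr}\,\boldsymbol{\Sigma})^2$. They let me write
\[
\Bigl(\mathbb{E}\|\mathbf{C}_E^{\,s}-\boldsymbol{\Sigma}_E^{\,s}\|_F^4\Bigr)^{1/2}\le C_d\,(N-1)^{-1/2}\,\mathrm{tr}((\boldsymbol{\Sigma}_E^{\,s})^2)^{1/2}\,(\mathrm{tr}\,\boldsymbol{\Sigma}_E^{\,s}).
\]
I would then absorb one factor of $\mathrm{tr}\,\boldsymbol{\Sigma}_E^{\,s}$, via Gaussian moment comparison, into a constant times $\mathrm{tr}((\boldsymbol{\Sigma}_E^{\,s})^2)^{1/2}$, again with a $d$-dependent constant.

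\textbf{Combining.} Multiplying the two factors yields
\[
\frac{8\kappa}{\delta_\kappa^2}\,\frac{C_d}{\sqrt{N-1}}\,\mathrm{tr}((\boldsymbol{\Sigma}_E^{\,s})^2)\cdot\frac{1}{\sqrt N}\bigl(\mathbb{E}\|\mathbf{u}_1\|^4+(\mathbb{E}\|\mathbf{u}_1\|^2)^2\bigr)^{1/2}.
\]
The bookkeeping that remains to be checked is matching the exact power $1/(N(N-1))$ against this $1/\sqrt{N(N-1)}$. The covariance-factor bound gives only $(N-1)^{-1/2}$. The mean-factor bound, however, gives $N^{-1}$ from the $N^{-2}$ terms in its fourth-moment expansion, but only $N^{-3/2}$ from the $N^{-3}\mathbb{E}\|\mathbf{u}_1\|^4$ term. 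I would therefore first verify whether the claimed $N^{-2}$ rate is attainable by keeping the full $N^{-2}$ factor from the mean moment. If it is not, I would settle for stating the bound with the rate that the argument actually delivers.
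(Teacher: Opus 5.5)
Your route is the paper's route. You combine the marginal Cauchy--Schwarz bound \eqref{eq:var_projector_term_bound_main} with a Gaussian fourth-moment bound for $\|\mathbf{C}_E^{\,s}-\boldsymbol{\Sigma}_E^{\,s}\|_F$ (Lemma~\ref{lem:cov_frob_fourth_moment}) and a fourth-moment bound for the sample mean (Lemma~\ref{lem:sample_mean_fourth_moment}). Your individual moment computations are correct. Writing $\mathbf{C}_E^{\,s}-\boldsymbol{\Sigma}_E^{\,s}$ as an average of $N-1$ i.i.d.\ mean-zero matrices in $(\mathbb{R}^{d\times d},\|\cdot\|_F)$ is a self-contained proof of the Wishart estimate that the paper only cites as standard. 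The proposal nevertheless does not prove the statement. It ends with the rate $1/\sqrt{N(N-1)}$ and leaves open whether $1/(N(N-1))$ holds. The cause is a square-root slip, not a real obstruction.

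Your own display gives
\[
\mathbb{E}\|\mathbf{C}_E^{\,s}-\boldsymbol{\Sigma}_E^{\,s}\|_F^4\le\frac{3}{(N-1)^2}\bigl(\mathbb{E}\|\mathbf{Y}\|_F^2\bigr)^2+\frac{1}{(N-1)^3}\,\mathbb{E}\|\mathbf{Y}\|_F^4 ,
\]
whose square root is of order $(N-1)^{-1}$, not $(N-1)^{-1/2}$ as you write in the ``main obstacle'' step.

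Likewise, $\mathbb{E}\|\bar{\mathbf{u}}\|^4\le 3N^{-2}\bigl(\mathbb{E}\|\mathbf{u}_1\|^4+(\mathbb{E}\|\mathbf{u}_1\|^2)^2\bigr)$. So the mean factor is at most $\sqrt{3}\,N^{-1}$ times the square root of the bracket. The $N^{-3}$ term contributes $N^{-3/2}$, which decays \emph{faster} than $N^{-1}$, so it does not weaken the rate. The $1/\sqrt{N}$ in your combining display also contradicts your own mean-factor paragraph. The heuristic is that $\|\mathbf{C}_E^{\,s}-\boldsymbol{\Sigma}_E^{\,s}\|_F$ and $\|\bar{\mathbf{e}}^{\,s}-\boldsymbol{\mu}_E^{\,s}\|$ are both of size $N^{-1/2}$. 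Their fourth moments are therefore $O(N^{-2})$, and the square roots of those are $O(N^{-1})$.

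The dimension bookkeeping is also simpler than you make it. Write $\boldsymbol{\Sigma}=\boldsymbol{\Sigma}_E^{\,s}$ and use $(\mathrm{tr}\,\boldsymbol{\Sigma})^2\le d\,\mathrm{tr}(\boldsymbol{\Sigma}^2)$, which is Cauchy--Schwarz on the eigenvalues; no ``Gaussian moment comparison'' is needed. Then $\mathbb{E}\|\mathbf{Y}\|_F^2=\mathrm{tr}(\boldsymbol{\Sigma}^2)+(\mathrm{tr}\,\boldsymbol{\Sigma})^2\le(d+1)\,\mathrm{tr}(\boldsymbol{\Sigma}^2)$. Also $\mathbb{E}\|\mathbf{Y}\|_F^4\le c\,(\mathrm{tr}\,\boldsymbol{\Sigma})^4\le c\,d^2\bigl(\mathrm{tr}(\boldsymbol{\Sigma}^2)\bigr)^2$. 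Together these give $\bigl(\mathbb{E}\|\mathbf{C}_E^{\,s}-\boldsymbol{\Sigma}_E^{\,s}\|_F^4\bigr)^{1/2}\le C_d\,(N-1)^{-1}\,\mathrm{tr}\bigl((\boldsymbol{\Sigma}_E^{\,s})^2\bigr)$, which is exactly Lemma~\ref{lem:cov_frob_fourth_moment}.

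Multiply $8\kappa/\delta_\kappa^2$, this factor, and $\sqrt{3}\,N^{-1}$ times the square root of the bracket. The result is precisely the stated bound with prefactor $1/(N(N-1))$ and $C=8\sqrt{3}\,C_d$. Nothing about attainability needs checking, and there is no reason to fall back to the weaker rate.
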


\noindent\textit{Proof.} See~\ref{app:proof_projector_scaling_cor}.

\begin{corollary}[Comparison to stochastic EnKF observation perturbations]
\label{cor:stochastic_comparison_new}
Consider a stochastic windowed EnKF update with perturbed observations
\begin{equation}
\mathbf{x}^{(j),a}_{\mathrm{stoch}}
=
\mathbf{x}^{(j),f}
+
\mathbf{K}^{(w)}\Bigl(\mathbf{z}^{(w)}+\boldsymbol{\epsilon}_{w}^{(j)}-\mathbf{H}^{(L)}\mathbf{x}^{(j),f}\Bigr),
\qquad
\boldsymbol{\epsilon}_{w}^{(j)}\stackrel{\mathrm{i.i.d.}}{\sim}\mathcal{N}(\mathbf{0},\mathbf{R}^{(L)}),
\end{equation}
where $\mathbf{K}^{(w)}$ is deterministic.
Let $\bar{\mathbf{x}}^{a}_{\mathrm{stoch}}$ denote the stochastic analysis mean and let $\bar{\mathbf{x}}^{a}_{\mathrm{det}}$
denote the corresponding mean obtained by setting $\boldsymbol{\epsilon}_{w}^{(j)}\equiv 0$. Then
\begin{equation}
\bar{\mathbf{x}}^{a}_{\mathrm{stoch}}
=
\bar{\mathbf{x}}^{a}_{\mathrm{det}}
+
\frac{1}{N}\sum_{j=1}^N \mathbf{K}^{(w)}\boldsymbol{\epsilon}_{w}^{(j)},
\end{equation}
and hence
\begin{equation}
\mathrm{Cov}\!\left(\bar{\mathbf{x}}^{a}_{\mathrm{stoch}}\right)
=
\mathrm{Cov}\!\left(\bar{\mathbf{x}}^{a}_{\mathrm{det}}\right)
+
\frac{1}{N}\mathbf{K}^{(w)}\mathbf{R}^{(L)}(\mathbf{K}^{(w)})^\top.
\end{equation}
In particular,
\begin{equation}
\label{eq:stoch_var_bound_new}
\begin{split}
\mathbb{E}\!\left[\left\|\bar{\mathbf{x}}^{a}_{\mathrm{stoch}}-\mathbb{E}[\bar{\mathbf{x}}^{a}_{\mathrm{stoch}}]\right\|^2\right]
&\;\ge\;
\mathbb{E}\!\left[\left\|\frac{1}{N}\sum_{j=1}^N \mathbf{K}^{(w)}\boldsymbol{\epsilon}_{w}^{(j)}\right\|^2\right]\\
&=
\frac{1}{N}\,\mathrm{tr}\!\bigl(\mathbf{K}^{(w)}\mathbf{R}^{(L)}(\mathbf{K}^{(w)})^\top\bigr)
\;\le\;
\frac{\|\mathbf{K}^{(w)}\|_2^2\,\mathrm{tr}(\mathbf{R}^{(L)})}{N}.
\end{split}
\end{equation}
If $\mathbf{R}^{(L)}=\bar r\,\mathbf{I}$, then $\mathrm{tr}(\mathbf{R}^{(L)})=d\,\bar r$ and the perturbation-induced contribution admits the upper bound $\mathcal{O}(d/N)$, provided $\|\mathbf{K}^{(w)}\|_2$ remains uniformly controlled. Under the Bias Variance Hypotheses~\ref{def:bias_variance_hypotheses}, the QPCA-EnDCF variance satisfies $\mathrm{Var}(N,\kappa,s)=\mathcal{O}(1/N)$ (for fixed approximation stage $s$), with an additional stability factor $\kappa/\delta_\kappa^2$ arising from estimation of the rank-$\kappa$ projector (as in~\eqref{eq:var_projector_term_bound_main}).
\end{corollary}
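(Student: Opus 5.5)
The plan is to exploit the linearity of the stochastic update in the perturbations, using that $\mathbf{K}^{(w)}$ is deterministic. First I would average the member-wise update over $j=1,\ldots,N$. Since $\mathbf{K}^{(w)}$ does not depend on $j$, it commutes with the average, and
\[
\bar{\mathbf{x}}^{a}_{\mathrm{stoch}}=\bar{\mathbf{x}}^{f}+\mathbf{K}^{(w)}\bigl(\mathbf{z}^{(w)}-\mathbf{H}^{(L)}\bar{\mathbf{x}}^{f}\bigr)+\frac{1}{N}\sum_{j}\mathbf{K}^{(w)}\boldsymbol{\epsilon}_{w}^{(j)}.
\]
The first two terms are, by definition, $\bar{\mathbf{x}}^{a}_{\mathrm{det}}$, which gives the additive representation.

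For the covariance identity, I would use that the perturbations are drawn independently of the forecast ensemble. Here $\mathbf{z}^{(w)}$ is held fixed, as in the conditioning convention of Definition~\ref{def:whitened_residual_covariances}. Then $\bar{\mathbf{x}}^{a}_{\mathrm{det}}$, a function of the forecast ensemble only, is independent of $\boldsymbol{\xi}:=N^{-1}\sum_j\mathbf{K}^{(w)}\boldsymbol{\epsilon}_{w}^{(j)}$, so the cross-covariances vanish and the covariances add. The i.i.d.\ perturbations have mean zero and covariance $\mathbf{R}^{(L)}$, so $\mathrm{Cov}(N^{-1}\sum_j\boldsymbol{\epsilon}_w^{(j)})=N^{-1}\mathbf{R}^{(L)}$. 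Pushing this through the deterministic linear map $\mathbf{K}^{(w)}$ gives $N^{-1}\mathbf{K}^{(w)}\mathbf{R}^{(L)}(\mathbf{K}^{(w)})^\top$.

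Next I take traces. The scalar variance $\mathbb{E}\|\bar{\mathbf{x}}^a_{\mathrm{stoch}}-\mathbb{E}\bar{\mathbf{x}}^a_{\mathrm{stoch}}\|^2$ equals $\mathrm{tr}\,\mathrm{Cov}(\bar{\mathbf{x}}^a_{\mathrm{stoch}})$. This splits as $\mathrm{tr}\,\mathrm{Cov}(\bar{\mathbf{x}}^a_{\mathrm{det}})\ge 0$ plus $N^{-1}\mathrm{tr}(\mathbf{K}^{(w)}\mathbf{R}^{(L)}(\mathbf{K}^{(w)})^\top)$, which yields the lower bound and the equality in \eqref{eq:stoch_var_bound_new}. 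For the upper bound I would write
\[
\mathrm{tr}(\mathbf{K}\mathbf{R}\mathbf{K}^\top)=\mathrm{tr}(\mathbf{K}^\top\mathbf{K}\mathbf{R})\le\|\mathbf{K}^\top\mathbf{K}\|_2\,\mathrm{tr}(\mathbf{R}),
\]
using $\mathrm{tr}(\mathbf{A}\mathbf{B})\le\|\mathbf{A}\|_2\mathrm{tr}(\mathbf{B})$ for $\mathbf{A}$ symmetric and $\mathbf{B}\succeq0$. When $\mathbf{R}^{(L)}=\bar r\mathbf{I}$, $\mathrm{tr}(\mathbf{R}^{(L)})=d\bar r$, which gives the $\mathcal{O}(d/N)$ statement under uniform control of $\|\mathbf{K}^{(w)}\|_2$.

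Finally, for the QPCA-EnDCF claim I would invoke Theorem~\ref{thm:variance_decomposition}. The first two terms in \eqref{eq:var_bound_fixed_main_raw} are explicitly $\mathcal{O}(1/N)$. This uses finite second moments of the forecast and Assumption~\ref{ass:gain_moment} for $\mathbb{E}\|\mathbf{K}^{\mathrm{DC}}\|_2^2$, with $\mathrm{tr}(\boldsymbol{\Sigma}_E^{\,s})$ fixed for fixed $s$.

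The third, gain-weighted projector term is the only delicate step, because $\mathbf{K}^{\mathrm{DC}}$ is random and correlated with $\hat{\mathbf{P}}_\kappa^{\,s}$ and $\bar{\mathbf{e}}^{\,s}$. I would first try Hölder with a uniform operator-norm bound on the implemented gain, as allowed by the remark on stabilized inverses. That leaves $\sup\|\mathbf{K}^{\mathrm{DC}}\|_2^2$ times the projector factor, which \eqref{eq:var_projector_term_bound_main} together with the fourth-moment concentration of Lemma~\ref{lem:covariance_concentration} (or Corollary~\ref{cor:projector_term_scaling_gaussian} in the Gaussian case) bounds by $(\kappa/\delta_\kappa^2)\cdot\mathcal{O}(N^{-1/2})\cdot\mathcal{O}(N^{-1/2})=\mathcal{O}(\kappa/(\delta_\kappa^2N))$. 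This exhibits the stated stability factor, and I expect this justification to be the main obstacle.
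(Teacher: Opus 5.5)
Your argument for the stochastic part (averaging the memberwise update, independence of the perturbations from the forecast ensemble with $\mathbf{z}^{(w)}$ fixed, the covariance $N^{-1}\mathbf{K}^{(w)}\mathbf{R}^{(L)}(\mathbf{K}^{(w)})^\top$, then traces) is exactly the paper's argument. Your step $\mathrm{tr}(\mathbf{A}\mathbf{B})\le\|\mathbf{A}\|_2\,\mathrm{tr}(\mathbf{B})$ supplies the upper bound that the paper leaves implicit.

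The gap is in the QPCA-EnDCF clause. You control the third term of \eqref{eq:var_bound_fixed_main_raw} by an almost-sure bound $\|\mathbf{K}^{\mathrm{DC}}\|_2\le\bar K$, but this is not among the Bias Variance Hypotheses. Assumption~\ref{ass:gain_moment} gives only $\mathbb{E}\|\mathbf{K}^{\mathrm{DC}}\|_2^2<\infty$, and the remark on stabilized inverses states uniform control as a proviso, not a consequence. Nothing makes the pseudoinverse gain $\mathbf{K}^{\mathrm{DC}}=\mathbf{A}_x\mathbf{A}_z^{\dagger}$ bounded: it involves the reciprocal of the smallest nonzero singular value of $\mathbf{A}_z$, which has no almost-sure lower bound. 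With only a second moment on the gain, $\mathbb{E}\bigl[\|\mathbf{K}^{\mathrm{DC}}\|_2^2\,\|(\hat{\mathbf{P}}_\kappa^{\,s}-\mathbf{P}_\kappa^{\,s})(\bar{\mathbf{e}}^{\,s}-\boldsymbol{\mu}_E^{\,s})\|^2\bigr]$ does not factor and is not controlled. A weaker patch than yours is $\sup_N\mathbb{E}\|\mathbf{K}^{\mathrm{DC}}\|_2^4<\infty$: combined with $\|\hat{\mathbf{P}}_\kappa^{\,s}-\mathbf{P}_\kappa^{\,s}\|_2\le 1$, Cauchy--Schwarz and Lemma~\ref{lem:sample_mean_fourth_moment} then give $\mathcal{O}(1/N)$. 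Similarly, calling the first two terms ``explicitly $\mathcal{O}(1/N)$'' requires $\mathbb{E}\|\mathbf{K}^{\mathrm{DC}}\|_2^2$ to be bounded uniformly in $N$, not merely finite. So you prove the clause only under strengthened hypotheses. In fairness, the paper's one-line justification (a restatement of the theorem together with \eqref{eq:var_projector_term_bound_main}) has the same hole, as the closing sentence of Theorem~\ref{thm:variance_decomposition} itself concedes. You located the weak point correctly, but adding an assumption patches the statement rather than proving it.

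Two further points, even granting your bound.

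\emph{The projector route is unnecessary.} Since $\|\hat{\mathbf{P}}_\kappa^{\,s}-\mathbf{P}_\kappa^{\,s}\|_2\le 1$ for a difference of orthogonal projectors, the third term is at most $2\|\mathbf{R}^{(L)}\|_2\,\bar K^2\,\mathrm{tr}(\boldsymbol{\Sigma}_E^{\,s})/N$, using only second moments. By contrast, \eqref{eq:var_projector_term_bound_main} needs $\mathbb{E}\|\mathbf{C}_E^{\,s}-\boldsymbol{\Sigma}_E^{\,s}\|_F^4<\infty$. In general that requires eighth moments of the residuals, and the paper establishes it only in the Gaussian case (Lemma~\ref{lem:cov_frob_fourth_moment}). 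Lemma~\ref{lem:covariance_concentration}, which you cite for it, is a deterministic perturbation result and provides no moment bound.

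\emph{Your rates are off.} Both square-rooted fourth moments in \eqref{eq:var_projector_term_bound_main} are $\mathcal{O}(N^{-1})$, not $\mathcal{O}(N^{-1/2})$, so the projector term is $\mathcal{O}(\kappa/(\delta_\kappa^2N^2))$, as in Corollary~\ref{cor:projector_term_scaling_gaussian}. Your $\mathcal{O}(\kappa/(\delta_\kappa^2N))$ is valid but loose. It also hides that the factor $\kappa/\delta_\kappa^2$ multiplies a lower-order term rather than the leading $1/N$ contribution.
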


\noindent\textit{Proof.} See~\ref{app:proof_stochastic_comparison_cor}.

Equation~\eqref{eq:stoch_var_bound_new} isolates an irreducible $\mathcal{O}(N^{-1})$ contribution to the variance of the stochastic analysis mean arising solely from observation perturbations. In empirical implementations, $\mathbf{K}^{(w)}$ is typically computed from sample covariances and is therefore random; the covariance identity holds conditionally on $\mathbf{K}^{(w)}$ (or on the forecast ensemble), and the unconditional variance term becomes $\frac{1}{N}\,\mathbb{E}\!\left[\mathbf{K}^{(w)}\mathbf{R}^{(L)}(\mathbf{K}^{(w)})^\top\right]$. QPCA-EnDCF eliminates this specific source of sampling noise, but introduces variability through estimation of the low-rank projector; the latter contribution is controlled by the cutoff gap $\delta_\kappa$ and by the covariance concentration rate.
\par\smallskip

\FloatBarrier


\section{Bias--Variance Tradeoff}\label{sec:bias_variance_results}

Here we provide empirical validation and quantify the relative contributions of bias and variance. For QPCA-EnDCF, Theorem~\ref{thm:bias_variance} yields $\mathrm{Variance}=\mathcal{O}(1/N)$ with $\kappa$-dependent prefactors arising through projector stability (via $\kappa/\delta_\kappa^2$) and through the observation-space term involving $\mathrm{tr}(\boldsymbol{\Sigma}_E)$. The bias contribution is governed by $\|(\mathbf{I}-\mathbf{P}_\kappa)\boldsymbol{\mu}_E\|^2$, the component of the mean whitened innovation orthogonal to the retained eigenspace (see Theorem~\ref{thm:bias_variance}. Importantly, this cannot be bounded simply by the discarded spectral tail $\sum_{i>\kappa}\lambda_i$ without additional structure linking $\boldsymbol{\mu}_E$ to $\boldsymbol{\Sigma}_E$ (e.g., a source condition). However, when the mean mismatch is well-aligned with the leading covariance directions, which occurs empirically when the dominant eigenmode captures systematic forecast--observation discrepancy. The aggressive truncation ($\kappa=1$) can reduce sampling variance without a commensurate bias penalty. By contrast, standard ensemble updates exhibit variance scaling as $\mathcal{O}(r/N)$, where $r$ approaches the effective rank of the update. We estimate bias and variance empirically using $N_{\mathrm{trial}}=5$ independent ensemble realizations per algorithm, differing only in initial ensemble draws. (Note: $N_{\mathrm{trial}}$ denotes the number of Monte Carlo trials, distinct from the ensemble size $N=10$ and the total rank samples $M_{\mathrm{rank}}$ used in Section~\ref{subsec:rank_histograms}.) Let $\bar{\mathbf{x}}_{t,w}^a$ denote the analysis ensemble mean at the end of window $w$ in trial $t$, and let $\mathbf{x}_{w}^{\mathrm{true}}$ denote the corresponding true state (both evaluated at window endpoints). For each window $w$, define the across-trial mean
\begin{equation}
\bar{\mathbf{x}}_{\cdot,w}^a:=\frac{1}{N_{\mathrm{trial}}}\sum_{t=1}^{N_{\mathrm{trial}}}\bar{\mathbf{x}}_{t,w}^a.
\end{equation}
The window-wise estimators are
\begin{align}
\widehat{\mathrm{Bias}}_{w}^2
&=
\left\|
\bar{\mathbf{x}}_{\cdot,w}^a-\mathbf{x}_{w}^{\mathrm{true}}
\right\|^2,
\\
\widehat{\mathrm{Variance}}_{w}
&=
\frac{1}{N_{\mathrm{trial}}}\sum_{t=1}^{N_{\mathrm{trial}}}
\left\|
\bar{\mathbf{x}}_{t,w}^a-\bar{\mathbf{x}}_{\cdot,w}^a
\right\|^2,
\end{align}
with $\widehat{\mathrm{MSE}}_{w}=\widehat{\mathrm{Bias}}_{w}^2+\widehat{\mathrm{Variance}}_{w}$. All norms are total (summed over $n$ state components), so bias$^2$, variance, and MSE values reported below are $\mathcal{O}(n)$ in magnitude. To summarize performance over the full run, we time-average these window-wise quantities:
\begin{equation}
\widehat{\mathrm{Bias}}^2:=\frac{1}{W}\sum_{w=1}^{W}\widehat{\mathrm{Bias}}_{w}^2,
\qquad
\widehat{\mathrm{Variance}}:=\frac{1}{W}\sum_{w=1}^{W}\widehat{\mathrm{Variance}}_{w},
\qquad
\widehat{\mathrm{MSE}}:=\frac{1}{W}\sum_{w=1}^{W}\widehat{\mathrm{MSE}}_{w}.
\end{equation}

Figure~\ref{fig:bias_variance_evolution} shows the window-wise bias--variance decomposition across the 50-window sequence for Sequential EnKF, 4D-EnKF, and QPCA-EnDCF. For each window $w$, the stacked areas depict $\widehat{\mathrm{Bias}}_{w}^2$ (solid) and $\widehat{\mathrm{Variance}}_{w}$ (hatched), whose sum equals $\widehat{\mathrm{MSE}}_{w}$. Estimates are computed from $N_{\mathrm{trial}}=5$ independent realizations. QPCA-EnDCF attains MSE $\approx13$, compared to $\approx22$ for Sequential EnKF and $\approx21$ for 4D-EnKF. The decomposition reveals the mechanism: squared bias is comparable across methods (Bias$^2\approx10$), while variance differs sharply. QPCA-EnDCF exhibits variance $\approx2.3$, an $\sim80\%$ reduction relative to stochastic methods (variance $\approx11.6$). Thus, aggressive truncation removes variance-dominated modes while retaining the leading mode capturing systematic mismatch. The time-averaged Bias$^2$/MSE ratio (computed from the same window-wise estimates) further clarifies this distinction. For QPCA-EnDCF, the ratio is about $82\%$, indicating error dominated by systematic bias rather than sampling noise. In contrast, stochastic methods show ratios near $45$--$47\%$, implying that roughly half of their error arises from sampling variance at fixed ensemble size. Figure~\ref{fig:mse_decomposition_bars} summarizes these results via stacked bar charts.

\begin{figure}[htbp]
\centering
\includegraphics[width=\textwidth]{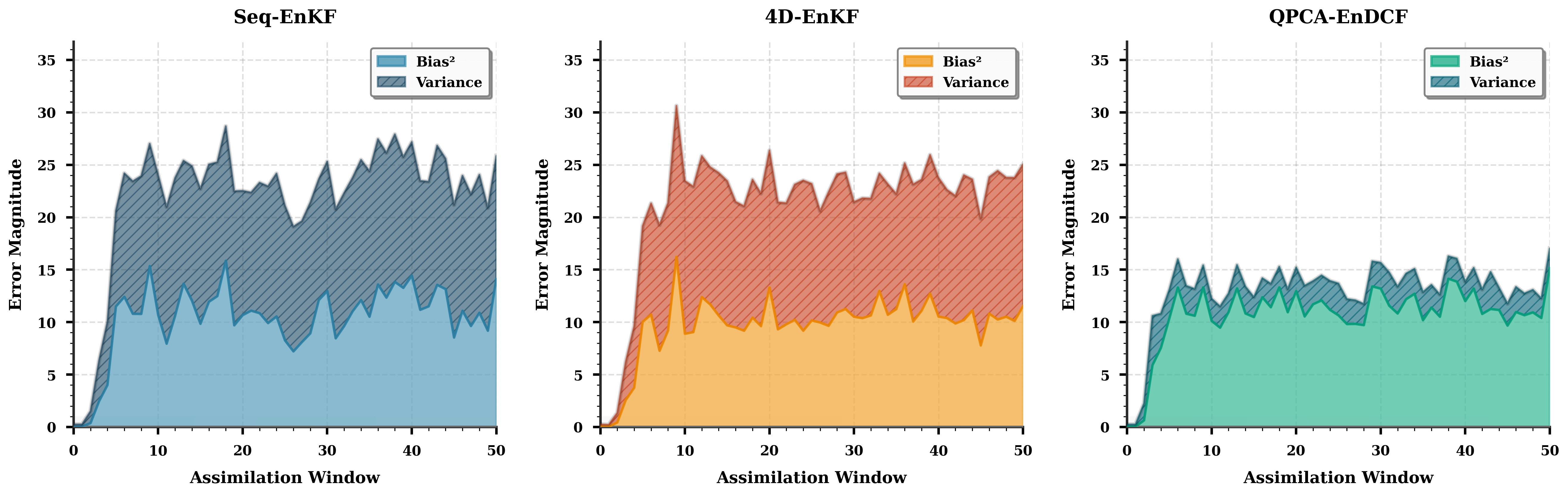}
\caption{Window-wise bias--variance decomposition over 50 assimilation windows (left to right: Sequential EnKF, 4D-EnKF, QPCA-EnDCF). The solid fill gives the baseline contribution $\widehat{\mathrm{Bias}}_{w}^2$, and the hatched fill is stacked additively on top of it to represent $\widehat{\mathrm{Variance}}_{w}$. Thus the top boundary of each stacked region gives the total $\widehat{\mathrm{MSE}}_{w}=\widehat{\mathrm{Bias}}_{w}^2+\widehat{\mathrm{Variance}}_{w}$. Estimates use $N_{\mathrm{trial}}=5$ independent initial-ensemble realizations.}
\label{fig:bias_variance_evolution}
\end{figure}

\begin{figure}[htbp]
\centering
\includegraphics[width=\textwidth]{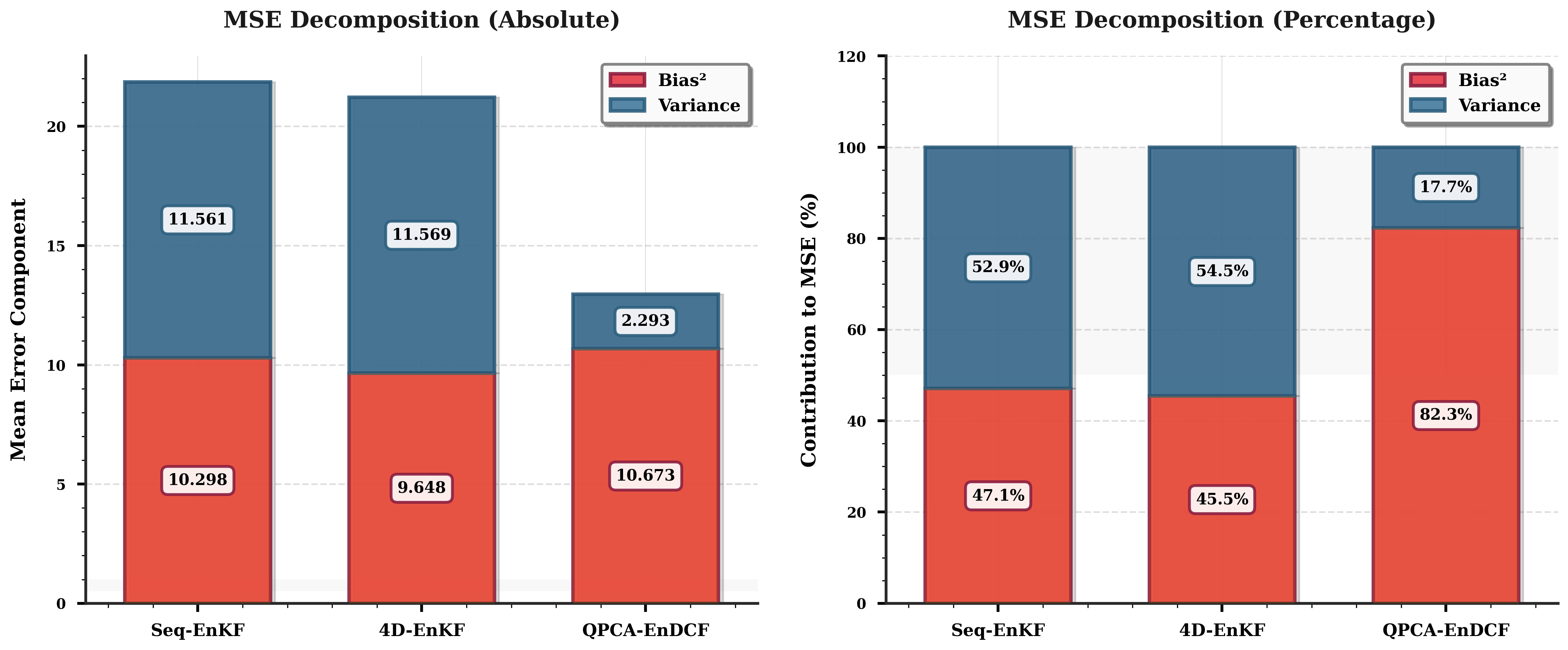}
\caption{Stacked-bar MSE decomposition. Left: absolute contributions of squared bias and variance. Right: percentage contributions normalized to unit height. QPCA-EnDCF exhibits comparable bias but markedly reduced variance relative to stochastic methods.}
\label{fig:mse_decomposition_bars}
\end{figure}

QPCA-EnDCF achieves MSE $=12.96$ (Bias$^2=10.67$, Variance$=2.29$), while Sequential EnKF and 4D-EnKF exhibit MSE $\approx21$--$22$ with variance contributions near $11.6$. Normalized bars emphasize the qualitative shift: QPCA-EnDCF error is overwhelmingly systematic, whereas stochastic methods are variance-limited.  These findings validate the theoretical predictions and explain QPCA-EnDCF’s superior calibration. Variance reduction arises from two effects: (i) the deterministic update avoids observation-perturbation noise, whose contribution scales like $\|\mathbf{K}^{(w)}\|_2^2\,L\,\mathrm{tr}(\mathbf{R})/N$ (as in \eqref{eq:stoch_var_bound_new}) and is substantial under baseline parameters; and (ii) spectral truncation ($\kappa=1$) concentrates updates in the dominant eigenmode, avoiding estimation of noise-dominated directions. With variance scaling as $\mathcal{O}(1/N)$ (with $\kappa$-dependent prefactors) rather than $\mathcal{O}(r/N)$, the observed roughly fivefold reduction follows directly. Crucially, this variance reduction occurs without a bias penalty. Unlike classical penalty-based regularization, which trades variance for increased bias, QPCA-EnDCF maintains bias comparable to or lower than that of stochastic methods while sharply reducing variance. This favorable regime is enabled by rapid spectral decay, whereby leading modes encode signal and higher modes primarily represent sampling noise. Together with the calibration diagnostics, the bias--variance analysis completes the mechanistic picture. QPCA-EnDCF’s near-ideal spread--skill ratio arises because variance is suppressed without inflating bias, allowing ensemble spread to reflect true error. Stochastic methods suffer from both inflated variance and collapsed spread, yielding severe miscalibration. The dominance of systematic bias in QPCA-EnDCF suggests that further gains are attainable through model or operator improvements, whereas stochastic methods face fundamental variance limits at fixed ensemble size.
\FloatBarrier

\section{Conclusion}\label{sec:conclusion}

This work introduced QPCA-EnDCF, a deterministic ensemble assimilation method based on spectral regularization in observation space. The method whitens forecast--observation residuals, identifies a low-rank correction subspace through an empirical spectral decomposition, and maps the resulting truncated increment back to state space through an empirical gain. This construction provides a data-consistent alternative to stochastic observation perturbations in small-ensemble regimes. The theoretical analysis separates population and finite-ensemble objects, establishes covariance concentration and eigenspace perturbation bounds, and derives a bias--variance decomposition for the analysis mean. The resulting comparison clarifies the principal distinction between stochastic and deterministic updates: stochastic EnKF variants contain an irreducible $\mathcal{O}(1/N)$ variance contribution from perturbed observations, whereas QPCA-EnDCF replaces this term with projector-estimation variability that is also $\mathcal{O}(1/N)$ but depends on the retained rank and the cutoff gap through eigenspace stability.

The numerical experiments on the Lorenz--96 system support this picture in a strongly undersampled regime. Relative to sequential and four-dimensional stochastic EnKF, QPCA-EnDCF yields substantially improved probabilistic calibration, as measured by spread--skill ratios, temporal tracking between spread and RMSE, and rank-histogram reliability. Under the baseline configuration, these calibration gains are accompanied by lower RMSE. The bias--variance decomposition gives a consistent explanation of this behavior: QPCA-EnDCF sharply reduces variance while maintaining bias at a comparable level in the regime studied here. Several idealizations limit the scope of the present results. The experiments are conducted in a perfect-model setting with a simplified observing system and a low-dimensional chaotic benchmark, and the theoretical analysis relies on tangent-linear and finite-moment assumptions that isolate the spectral and sampling mechanisms of interest. Future work should examine the method under model error, nonlinear and heterogeneous observation operators, adaptive selection of the truncation rank, and larger-scale models where computational constraints and error structure may alter the balance between truncation bias and sampling variance. Determining whether the calibration gains observed here persist in such settings remains the main open question.

\FloatBarrier

\section{Data Availability}
The code to recreate all of the examples in this work are located at \href{https://github.com/rspence821505//QPCA-EnDCF-Paper}{Data Consistent Ensemble Filtering Github}. 

\section{Acknowledgments}

T.~Butler's work is supported by the National Science Foundation under Grant No.~DMS-2208460 and is also supported by NSF IR/D program while working at National Science Foundation. 
C.~Dawson’s and R.~Spence's work is supported in part by the National Science Foundation No. DMS-2208461. 
However, any opinion, finding, conclusions, or recommendations expressed in this material are those of the authors and do not necessarily reflect the views of the National Science Foundation. 

\FloatBarrier

\appendix


\section{Table of Notation for Matrices and Operators}\label{sec:Notation}

\begin{table}[H]
\centering
\footnotesize
\resizebox{\textwidth}{!}{%
\begin{tabular}{ccl}
\toprule
\textbf{Symbol} & \textbf{Dimension} & \textbf{Description} \\
\midrule
$\mathbf{X}$ & $\mathbb{R}^{n\times N}$ & Ensemble matrix, $\mathbf{X}=[\mathbf{x}^{(1)},\ldots,\mathbf{x}^{(N)}]$ \\
$\mathbf{A}$ & $\mathbb{R}^{n\times N}$ & Anomaly matrix, $\mathbf{A}=\mathbf{X}-\bar{\mathbf{x}}\mathbf{1}^\top$ \\
$\mathbf{H}$ & $\mathbb{R}^{m\times n}$ & Observation operator at a single time \\
$\mathbf{H}^{(L)}$ & $\mathbb{R}^{d\times n}$ & Stacked observation operator over a window \\
$\mathbf{R}$ & $\mathbb{R}^{m\times m}$ & Observation-error covariance (single time), $\mathbf{R}=\sigma_{\mathrm{obs}}^2\mathbf{I}_m$ \\
$\mathbf{R}^{(L)}$ & $\mathbb{R}^{d\times d}$ & Stacked observation-error covariance, $\mathbf{R}^{(L)}=\mathbf{I}_L\otimes \mathbf{R}$ \\
$\mathbf{P}^f$ & $\mathbb{R}^{n\times n}$ & \textbf{Population} forecast covariance (deterministic) \\
$\widehat{\mathbf{P}}^{\,f}$ & $\mathbb{R}^{n\times n}$ & \textbf{Sample} forecast covariance, $\widehat{\mathbf{P}}^{\,f}=\frac{1}{N-1}\mathbf{A}\mathbf{A}^\top$ \\
$\mathbf{P}^a$ & $\mathbb{R}^{n\times n}$ & Population analysis covariance \\
$\widehat{\mathbf{P}}^{\,a}$ & $\mathbb{R}^{n\times n}$ & Sample analysis covariance \\
$\mathbf{E}$ & $\mathbb{R}^{d\times N}$ & Whitened residual matrix, $\mathbf{E}=[\mathbf{e}^{(1)},\ldots,\mathbf{e}^{(N)}]$ \\
$\mathbf{C}_E$ & $\mathbb{R}^{d\times d}$ & Sample covariance of whitened residuals \\
$\boldsymbol{\Sigma}_E$ & $\mathbb{R}^{d\times d}$ & Population covariance of whitened residuals \\
$\boldsymbol{\Sigma}_E^{\,\ell}$ & $\mathbb{R}^{d\times d}$ & Population covariance under approximate map $Q_\ell$ \\
$\boldsymbol{\Sigma}_E^\kappa$ & $\mathbb{R}^{d\times d}$ & Rank-$\kappa$ truncation of $\boldsymbol{\Sigma}_E$ \\
$\mathbf{C}_E^\kappa$ & $\mathbb{R}^{d\times d}$ & Rank-$\kappa$ truncation of $\mathbf{C}_E$ \\
$\mathbf{P}_\kappa$ & $\mathbb{R}^{d\times d}$ & Population rank-$\kappa$ projector, $\mathbf{P}_\kappa=\sum_{i=1}^\kappa \mathbf{v}_i\mathbf{v}_i^\top$ \\
$\hat{\mathbf{P}}_\kappa$ & $\mathbb{R}^{d\times d}$ & Sample rank-$\kappa$ projector, $\hat{\mathbf{P}}_\kappa=\sum_{i=1}^\kappa \hat{\mathbf{v}}_i\hat{\mathbf{v}}_i^\top$ \\
$\mathbf{K}_k$ & $\mathbb{R}^{n\times m}$ & Sequential Kalman gain at time $k$ (using sample covariance $\widehat{\mathbf{P}}^{\,f}$) \\
$\mathbf{K}^{(w)}$ & $\mathbb{R}^{n\times d}$ & Windowed Kalman gain for window $w$ ($d=mL$, from sample covariances) \\
$\mathbf{K}^{\mathrm{DC}}$ & $\mathbb{R}^{n\times d}$ & \textbf{Empirical} EnDCF gain (random), defined via a pseudoinverse \\
$\mathbf{X}_{k_w}$ & $\mathbb{R}^{n\times N}$ & Forecast ensemble at window endpoint time $k=k_w$ \\
$\mathbf{Z}^{(w)}$ & $\mathbb{R}^{d\times N}$ & Stacked forecast observations for window $w$, columns $Q(\mathbf{x}_{k_0(w)}^{(j),f})=\mathbf{H}^{(L)}\mathbf{x}_{k_0(w)}^{(j),f}$ \\
$\mathbf{A}_x$ & $\mathbb{R}^{n\times N}$ & State anomalies at endpoint, $\mathbf{A}_x=\mathbf{X}_{k_w}-\bar{\mathbf{x}}_{k_w}^f\mathbf{1}^\top$ \\
$\mathbf{A}_z$ & $\mathbb{R}^{d\times N}$ & Obs anomalies, $\mathbf{A}_z=\mathbf{Z}^{(w)}-\bar{\mathbf{z}}_{\mathrm{stack}}\mathbf{1}^\top$ \\
$\mathbf{P}_{xz}$ & $\mathbb{R}^{n\times d}$ & Cross-covariance, $\mathbf{P}_{xz}=\frac{1}{N-1}\mathbf{A}_x\mathbf{A}_z^\top$ \\
$\mathbf{P}_{zz}$ & $\mathbb{R}^{d\times d}$ & Auto-covariance, $\mathbf{P}_{zz}=\frac{1}{N-1}\mathbf{A}_z\mathbf{A}_z^\top$ \\
$\hat{\mathbf{V}}_\kappa$ & $\mathbb{R}^{d\times \kappa}$ & Matrix of first $\kappa$ sample eigenvectors, $\hat{\mathbf{V}}_\kappa=[\hat{\mathbf{v}}_1,\ldots,\hat{\mathbf{v}}_\kappa]$ \\
\bottomrule
\end{tabular}%
}
\end{table}

\FloatBarrier

\section{Algorithm Specifications}\label{app:algorithms}

We provide complete algorithmic specifications for the three ensemble data assimilation methods compared in this study: Sequential Ensemble Kalman Filter, Four-Dimensional Ensemble Kalman Filter, and QPCA Ensemble Data Consistency Filter.

\subsection{Sequential Stochastic Ensemble Kalman Filter}\label{app:seq_enkf}

The sequential stochastic ensemble Kalman filter processes observations individually, updating the ensemble through perturbed Kalman corrections at each observation time.

\begin{algorithm}[H]
\caption{Sequential Stochastic Ensemble Kalman Filter}
\label{alg:seq_enkf_appendix}
\begin{algorithmic}[1]
\Require Initial ensemble $\mathbf{X}_0 \in \mathbb{R}^{n \times N}$, observation operator $\mathbf{H}$, observation error covariance $\mathbf{R}$, observation sequence $\{\mathbf{z}_k\}_{k=1}^K$, forward model $\mathcal{M}$, multiplicative inflation factor $\lambda_{\mathrm{infl}} \geq 1$
\Ensure Analysis ensemble $\mathbf{X}_K^a$
\State $\mathbf{X} \gets \mathbf{X}_0$
\For{$k = 1$ to $K$}
    \For{$j = 1$ to $N$}
        \State $\mathbf{x}^{(j)} \gets \mathcal{M}(\mathbf{x}^{(j)})$ \Comment{Forecast propagation}
    \EndFor
    \State $\bar{\mathbf{x}}^f \gets \frac{1}{N}\sum_{j=1}^{N}\mathbf{x}^{(j)}$, \quad $\mathbf{A}^f \gets \mathbf{X} - \bar{\mathbf{x}}^f\mathbf{1}^{\top}$ \Comment{Forecast anomalies}
    \State $\widehat{\mathbf{P}}^{\,f} \gets \frac{1}{N-1}\mathbf{A}^f(\mathbf{A}^f)^{\top}$, \quad $\mathbf{Z} \gets \mathbf{H}\mathbf{X}$
    \State $\bar{\mathbf{z}} \gets \frac{1}{N}\mathbf{Z}\mathbf{1}$ \Comment{Mean predicted observation}
    \State $\mathbf{S} \gets \frac{1}{N-1}(\mathbf{Z} - \bar{\mathbf{z}}\mathbf{1}^{\top})(\mathbf{Z} - \bar{\mathbf{z}}\mathbf{1}^{\top})^{\top} + \mathbf{R}$
    \State $\mathbf{K}_k \gets \widehat{\mathbf{P}}^{\,f}\mathbf{H}^{\top}\mathbf{S}^{-1}$
    \For{$j = 1$ to $N$}
        \State $\boldsymbol{\epsilon}_{k}^{(j)} \sim \mathcal{N}(\mathbf{0}, \mathbf{R})$
        \State $\mathbf{x}^{(j)} \gets \mathbf{x}^{(j)} + \mathbf{K}_k(\mathbf{z}_k + \boldsymbol{\epsilon}_{k}^{(j)} - \mathbf{H}\mathbf{x}^{(j)})$ \Comment{Perturbed update}
    \EndFor
    \State $\bar{\mathbf{x}}^a \gets \frac{1}{N}\sum_{j=1}^{N}\mathbf{x}^{(j)}$
    \State $\mathbf{x}^{(j)} \gets \bar{\mathbf{x}}^a + \lambda_{\mathrm{infl}}(\mathbf{x}^{(j)} - \bar{\mathbf{x}}^a)$ for $j=1,\ldots,N$ \Comment{Post-update inflation}
\EndFor
\State \Return $\mathbf{X}$
\end{algorithmic}
\end{algorithm}

Observation perturbations $\boldsymbol{\epsilon}_{k}^{(j)} \sim \mathcal{N}(\mathbf{0}, \mathbf{R})$ are introduced to preserve ensemble variance in expectation. Multiplicative inflation $\lambda_{\mathrm{infl}} > 1$ is applied after each analysis update, inflating analysis anomalies about the analysis mean to counteract ensemble collapse.

\subsection{Four-Dimensional Stochastic Ensemble Kalman Filter}\label{app:4d_enkf}

The four-dimensional stochastic ensemble Kalman filter accumulates information from $L$ consecutive observations within temporal windows, performing joint analysis updates.

\begin{algorithm}[H]
\caption{Four-Dimensional Stochastic Ensemble Kalman Filter}
\label{alg:4d_enkf_appendix}
\begin{algorithmic}[1]
\Require Initial ensemble $\mathbf{X}_0 \in \mathbb{R}^{n \times N}$, observation operator $\mathbf{H}$, observation error covariance $\mathbf{R}$, window length $L$, number of windows $W$, observation sequence $\{\mathbf{z}_k\}_{k=1}^{K}$, forward model $\mathcal{M}$, multiplicative inflation factor $\lambda_{\mathrm{infl}} \geq 1$
\Ensure Analysis ensemble at window ends
\State $\mathbf{X} \gets \mathbf{X}_0$, \quad $\mathbf{R}^{(L)} \gets \mathbf{I}_L \otimes \mathbf{R}$
\For{$w = 1$ to $W$}
    \State $k_0 \gets (w-1)L$, \quad $k_w \gets wL$ \Comment{Global window endpoints}
    \State Initialize $\{\mathbf{X}_k\}_{k=k_0}^{k_w}$ with $\mathbf{X}_{k_0} \gets \mathbf{X}$
    \For{$\ell = 1$ to $L$}
        \State $k \gets k_0 + \ell$
        \For{$j = 1$ to $N$}
            \State $\mathbf{x}^{(j)} \gets \mathcal{M}(\mathbf{x}^{(j)})$
        \EndFor
        \State $\mathbf{X}_k \gets \mathbf{X}$, \quad $\mathbf{Z}_k \gets \mathbf{H}\mathbf{X}_k$
    \EndFor
    \State $\mathbf{z}^{(w)} \gets [\mathbf{z}_{k_0+1}^{\top}, \ldots, \mathbf{z}_{k_w}^{\top}]^{\top}$, \quad $\mathbf{Z}^{(w)} \gets [\mathbf{Z}_{k_0+1}^{\top}, \ldots, \mathbf{Z}_{k_w}^{\top}]^{\top}$
    \State $\bar{\mathbf{x}}_{k_w}^f \gets \frac{1}{N}\mathbf{X}_{k_w}\mathbf{1}$, \quad $\bar{\mathbf{z}}_{\mathrm{stack}} \gets \frac{1}{N}\mathbf{Z}^{(w)}\mathbf{1}$ \Comment{Ensemble means}
    \State $\mathbf{A}_x \gets \mathbf{X}_{k_w} - \bar{\mathbf{x}}_{k_w}^f\mathbf{1}^{\top}$, \quad $\mathbf{A}_z \gets \mathbf{Z}^{(w)} - \bar{\mathbf{z}}_{\mathrm{stack}}\mathbf{1}^{\top}$ \Comment{Forecast anomalies}
    \State $\mathbf{P}_{xz} \gets \frac{1}{N-1}\mathbf{A}_x\mathbf{A}_z^{\top}$, \quad $\mathbf{P}_{zz} \gets \frac{1}{N-1}\mathbf{A}_z\mathbf{A}_z^{\top}$
    \State $\mathbf{K}^{(w)} \gets \mathbf{P}_{xz}(\mathbf{P}_{zz} + \mathbf{R}^{(L)})^{-1}$
    \For{$j = 1$ to $N$}
        \State $\boldsymbol{\epsilon}_{w}^{(j)} \sim \mathcal{N}(\mathbf{0}, \mathbf{R}^{(L)})$
        \State $\mathbf{x}^{(j)} \gets \mathbf{x}^{(j)} + \mathbf{K}^{(w)}(\mathbf{z}^{(w)} + \boldsymbol{\epsilon}_{w}^{(j)} - \mathbf{Z}^{(w,j)})$ \Comment{Update at $k=k_w$}
    \EndFor
    \State $\mathbf{X} \gets \mathbf{X}_{k_w}$
    \State $\bar{\mathbf{x}}^a \gets \frac{1}{N}\mathbf{X}\mathbf{1}$
    \State $\mathbf{x}^{(j)} \gets \bar{\mathbf{x}}^a + \lambda_{\mathrm{infl}}(\mathbf{x}^{(j)} - \bar{\mathbf{x}}^a)$ for $j=1,\ldots,N$ \Comment{Post-update inflation}
\EndFor
\State \Return $\mathbf{X}$
\end{algorithmic}
\end{algorithm}

Joint processing of $L$ observations exploits temporal correlations through extended cross-covariance $\mathbf{P}_{xz} \in \mathbb{R}^{n \times d}$. Perturbations are sampled from $\mathcal{N}(\mathbf{0}, \mathbf{R}^{(L)})$ with block-diagonal covariance structure. Multiplicative inflation is applied after the analysis update, consistent with the sequential variant.

\subsection{QPCA Ensemble Data Consistency Filter}\label{app:qpca_endcf}

The QPCA ensemble data consistency filter implements deterministic residual filtering through whitening, spectral decomposition, and low-rank projection.

\begin{algorithm}[H]
\caption{QPCA Ensemble Data Consistency Filter}
\label{alg:qpca_endcf}
\begin{algorithmic}[1]
\Require Initial ensemble $\mathbf{X}_0 \in \mathbb{R}^{n \times N}$, observation operator $\mathbf{H}$, observation error covariance $\mathbf{R}$, window length $L$, number of windows $W$, observation sequence $\{\mathbf{z}_k\}_{k=1}^{K}$, forward model $\mathcal{M}$, spectral truncation $\kappa$
\Ensure Analysis ensemble at window ends
\State $\mathbf{X} \gets \mathbf{X}_0$, \quad $\mathbf{R}^{(L)} \gets \mathbf{I}_L \otimes \mathbf{R}$
\For{$w = 1$ to $W$}
    \State \textit{// Forecast phase}
    \State $k_0 \gets (w-1)L$, \quad $k_w \gets wL$ \Comment{Global window endpoints}
    \State Initialize $\{\mathbf{X}_k\}_{k=k_0}^{k_w}$ with $\mathbf{X}_{k_0} \gets \mathbf{X}$
    \For{$\ell = 1$ to $L$}
        \State $k \gets k_0 + \ell$
        \For{$j = 1$ to $N$}
            \State $\mathbf{x}^{(j)} \gets \mathcal{M}(\mathbf{x}^{(j)})$
        \EndFor
        \State $\mathbf{X}_k \gets \mathbf{X}$, \quad $\mathbf{Z}_k \gets \mathbf{H}\mathbf{X}_k$
    \EndFor
    \State
    \State \textit{// Whitened residual computation}
    \State $\mathbf{z}^{(w)} \gets [\mathbf{z}_{k_0+1}^{\top}, \ldots, \mathbf{z}_{k_w}^{\top}]^{\top}$, \quad $\mathbf{Z}^{(w)} \gets [\mathbf{Z}_{k_0+1}^{\top}, \ldots, \mathbf{Z}_{k_w}^{\top}]^{\top}$
    \State $\bar{\mathbf{x}}_{k_w}^f \gets \frac{1}{N}\mathbf{X}_{k_w}\mathbf{1}$, \quad $\bar{\mathbf{z}}_{\mathrm{stack}} \gets \frac{1}{N}\mathbf{Z}^{(w)}\mathbf{1}$ \Comment{Ensemble means}
    \State $\mathbf{D} \gets \mathbf{Z}^{(w)} - \mathbf{z}^{(w)}\mathbf{1}^{\top}$
    \State $\mathbf{E} \gets (\mathbf{R}^{(L)})^{-1/2}\mathbf{D}$ \Comment{Whitening}
    \State $\mathbf{E}_c \gets \mathbf{E} - \frac{1}{N}(\mathbf{E}\mathbf{1})\mathbf{1}^{\top}$ \Comment{Centering}
    \State
	    \State \textit{// Spectral decomposition}
	    \State $\mathbf{C}_E \gets \frac{1}{N-1}\mathbf{E}_c\mathbf{E}_c^{\top}$
	    \State $[\hat{\mathbf{V}}, \hat{\boldsymbol{\Lambda}}] \gets \mathrm{eig}(\mathbf{C}_E)$ \Comment{$\hat{\mathbf{V}}=[\hat{\mathbf{v}}_1,\ldots,\hat{\mathbf{v}}_d]$, $\hat{\boldsymbol{\Lambda}}=\mathrm{diag}(\hat{\lambda}_1,\ldots,\hat{\lambda}_d)$}
	    \State $r \gets \min(d, N-1)$ \Comment{Maximum rank of $\mathbf{C}_E$, where $d=mL$}
	    \State Sort indices so $\hat{\lambda}_1 \geq \hat{\lambda}_2 \geq \cdots \geq \hat{\lambda}_{r} \geq 0$; permute $\hat{\mathbf{v}}_i$ consistently
	    \State $\hat{\mathbf{V}}_{\kappa} \gets [\hat{\mathbf{v}}_1, \ldots, \hat{\mathbf{v}}_{\kappa}]$ \Comment{Leading $\kappa$ eigenvectors}
	    \State
	    \State \textit{// Regularized correction}
	    \State $\mathbf{Q}_{\mathrm{PCA}} \gets \hat{\mathbf{V}}_{\kappa}^{\top}\mathbf{E}$
	    \State $\boldsymbol{\Delta}_{\mathrm{white}} \gets -\hat{\mathbf{V}}_{\kappa}\mathbf{Q}_{\mathrm{PCA}}$
	    \State $\boldsymbol{\Delta}_{\mathrm{obs}} \gets (\mathbf{R}^{(L)})^{1/2}\boldsymbol{\Delta}_{\mathrm{white}}$
    \State
    \State \textit{// Update}
		    \State $\mathbf{A}_x \gets \mathbf{X}_{k_w} - \bar{\mathbf{x}}_{k_w}^f\mathbf{1}^{\top}$
		    \State $\mathbf{A}_z \gets \mathbf{Z}^{(w)} - \bar{\mathbf{z}}_{\mathrm{stack}}\mathbf{1}^{\top}$
		    \State $\mathbf{P}_{xz} \gets \frac{1}{N-1}\mathbf{A}_x\mathbf{A}_z^{\top}$, \quad $\mathbf{P}_{zz} \gets \frac{1}{N-1}\mathbf{A}_z\mathbf{A}_z^{\top}$
		    \State $\mathbf{K}^{\mathrm{DC}} \gets \mathbf{P}_{xz}(\mathbf{P}_{zz})^{\dagger}$
		    \State $\mathbf{X}_{k_w} \gets \mathbf{X}_{k_w} + \mathbf{K}^{\mathrm{DC}}\boldsymbol{\Delta}_{\mathrm{obs}}$
    \State $\mathbf{X} \gets \mathbf{X}_{k_w}$
\EndFor
\State \Return $\mathbf{X}$
\end{algorithmic}
\end{algorithm}

\subsubsection{Implementation Notes}

The whitening transformation $\mathbf{E} \gets (\mathbf{R}^{(L)})^{-1/2}\mathbf{D}$ normalizes residuals so observation errors have identity covariance in transformed space. The algorithm accepts general $\mathbf{R}$. For the diagonal special case $\mathbf{R}=\sigma_{\mathrm{obs}}^2\mathbf{I}_m$, this reduces to scalar division by $\sigma_{\mathrm{obs}}$ and can be implemented efficiently. For correlated errors, matrix square root requires Cholesky decomposition or symmetric eigendecomposition.

Spectral truncation implements hard thresholding: the coordinate map $\mathbf{Q}_{\mathrm{PCA}} = \hat{\mathbf{V}}_{\kappa}^{\top}\mathbf{E}$ retains only the leading $\kappa$ eigenmodes, and the lifted correction $\boldsymbol{\Delta}_{\mathrm{white}} = -\hat{\mathbf{V}}_{\kappa}\mathbf{Q}_{\mathrm{PCA}}$ carries a negative sign so that the projected residual is driven toward zero, shifting the ensemble toward the observations.

The algorithm applies a deterministic correction to the ensemble without stochastic observation perturbations. As written, $\boldsymbol{\Delta}_{\mathrm{obs}}$ is a matrix of memberwise observation-space increments derived from the residual matrix, so corrections are deterministic but generally member dependent. State anomalies are computed without inflation factor, in contrast to explicit inflation in stochastic methods.

\FloatBarrier


\section{Rank Histogram Analysis}\label{subsec:rank_histograms}

For each assimilation cycle $k$ and state component $i$, we compute the rank of the truth within the analysis ensemble,
\begin{equation}
r_{k,i}
=
\left|\{j : x_{k,i}^{(j),a} < x_{k,i}^{\mathrm{true}}\}\right| + 1,
\label{eq:rank}
\end{equation}
where $x_{k,i}^{(j),a}$ is the $i$th component of analysis ensemble member $j$. Ties (which occur with probability zero under continuous distributions and are empirically rare) are broken randomly. Under perfect calibration, ranks are uniformly distributed on $\{1,\ldots,N+1\}$ with probability $1/(N+1)$. Systematic departures indicate miscalibration: dome-shaped histograms reflect overdispersion, U-shaped histograms indicate underdispersion, and asymmetry signals bias.

Uniformity is assessed using a chi-squared goodness-of-fit statistic,
\begin{equation}
\chi^2 = \sum_{b=1}^{N+1}\frac{(O_b - E_b)^2}{E_b},
\label{eq:chi_squared}
\end{equation}
where $O_b$ denotes the observed count in bin $b$ and $E_b = M_{\mathrm{rank}}/(N+1)$ is the expected count under uniformity, with $M_{\mathrm{rank}}$ being the total number of rank samples. Because large sample sizes make $\chi^2$ highly sensitive to minor deviations, we also report a flatness metric
\begin{equation}
\text{Flatness} = \frac{\sqrt{\frac{1}{N+1}\sum_{b=1}^{N+1}(f_b - \bar{f})^2}}{\bar{f}},
\label{eq:flatness}
\end{equation}
where $f_b = O_b / M_{\mathrm{rank}}$ is the observed frequency of bin $b$ and $\bar{f} = 1/(N+1)$ is the expected uniform frequency. This normalized standard deviation quantifies the practical magnitude of nonuniformity: perfectly uniform histograms yield Flatness $= 0$, while severe departures produce values $\gg 1$.

Ranks are computed at each algorithm's analysis times. Aggregating across all state components ($n=40$) and trials (5) yields $M_{\mathrm{rank}}=n \times K \times 5 = 50{,}000$ rank samples for the Sequential EnKF (evaluated at all $K=WL=250$ observation times), and $M_{\mathrm{rank}}=n \times W \times 5 = 10{,}000$ samples for the windowed methods (evaluated at the $W=50$ window endpoints). For $N=10$, there are $N+1=11$ bins, yielding an expected count of $E_b=M_{\mathrm{rank}}/11$ per bin under uniformity.

\begin{figure}[htbp]
\centering
\includegraphics[width=\textwidth]{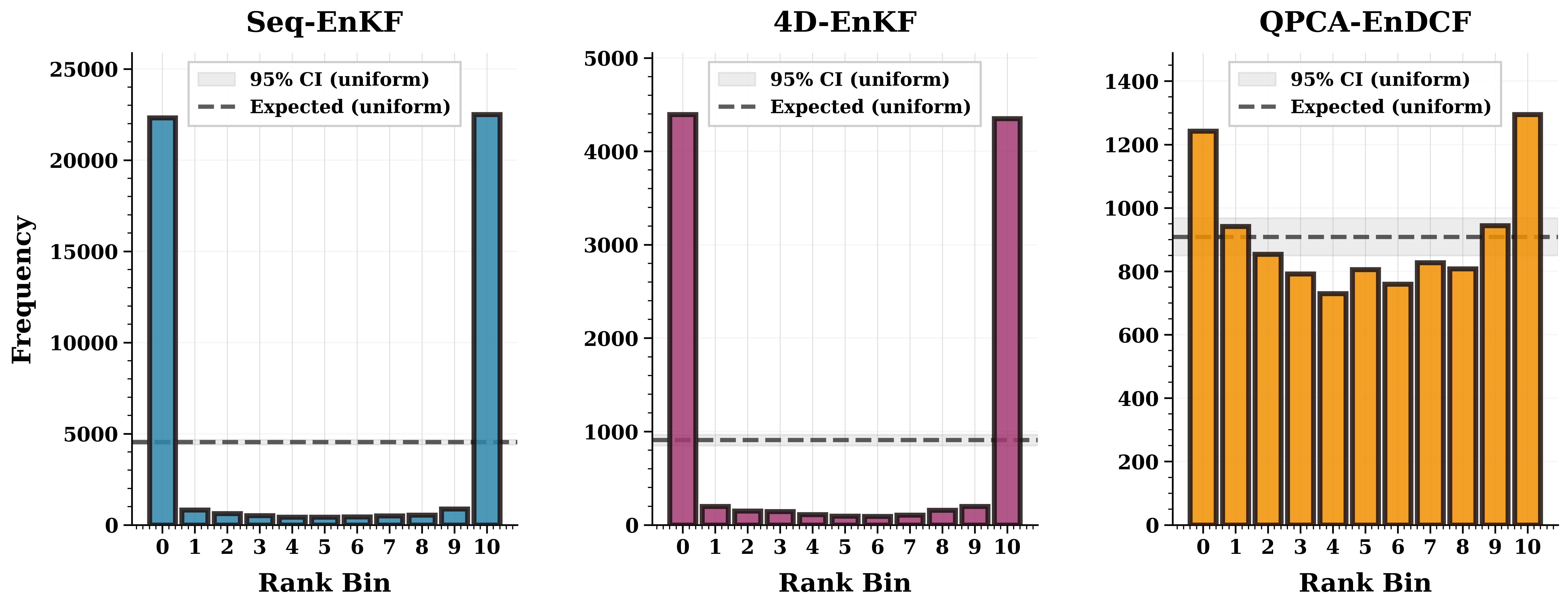}
\caption{Rank histogram analysis aggregated over all state components, analysis times, and trials. The dashed line indicates the expected uniform count $E_b=M_{\mathrm{rank}}/(N+1)$ in each panel (here $E_b\approx 4545$ for Seq-EnKF and $E_b\approx 909$ for 4D-EnKF and QPCA-EnDCF, corresponding to uniform relative frequency $1/11$), and the shaded band denotes the 95\% confidence interval under the uniformity null. Left: Sequential EnKF exhibits severe U-shaped distortion ($\chi^2=173{,}195$, flatness $1.861$). Center: 4D-EnKF shows substantial but reduced departure from uniformity ($\chi^2=32{,}269$, flatness $1.796$). Right: QPCA-EnDCF shows near-uniformity ($\chi^2=396.8$, flatness $0.199$).}
\label{fig:rank_histograms}
\end{figure}

QPCA-EnDCF provides the closest approximation to uniformity, with a flatness metric of $0.199$ and rank frequencies fluctuating mildly around the expected value. Although $\chi^2=396.8$ formally rejects uniformity ($p<0.001$), this reflects the extreme power of the test with $10{,}000$ samples. In relative terms, the departure is nearly two orders of magnitude smaller than for 4D-EnKF and three orders of magnitude smaller than for the Sequential EnKF, indicating near-distributional consistency in practice.

Sequential EnKF is strongly miscalibrated ($\chi^2=173{,}195$, flatness $1.861$), with a pronounced U shape: extreme ranks occur far more frequently than expected, while central ranks are depleted. This is the canonical signature of severe underdispersion caused by ensemble collapse. The 4D-EnKF shows improvement ($\chi^2=32{,}269$, flatness $1.796$) but retains a U-shaped structure, indicating persistent underdispersion despite temporal coupling. While increasing $L$ provides more observational information per window, it simultaneously increases the effective observation dimension $mL$, exacerbating undersampling when $N \ll mL$. For 4D-EnKF with $N = 10$ and $mL = 100$, the latter effect dominates.

\FloatBarrier

\section{Concentration and Spectral Perturbation Analysis for QPCA-EnDCF}
\label{sec:concentration_spectral}

The analysis of this section is organized around a decomposition that makes explicit how spectral truncation and finite-ensemble sampling jointly determine performance. 
The development proceeds in three stages. 
First, we establish baseline identities and moment-based concentration estimates for the sample covariance of the whitened forecast--observation residuals, quantifying how empirical second-order statistics fluctuate around their population counterparts. 
Second, we invoke deterministic matrix perturbation theory to translate these covariance deviations into stability results for eigenvalues and eigenspaces, thereby controlling the accuracy of empirical spectral projectors. 
Third, we derive a bias--variance decomposition for the mean-squared error of the analysis mean, which cleanly separates truncation effects, which are governed by the truncation level $\kappa$ and the alignment of the mean innovation with the retained subspace from sampling effects, which depend on the ensemble size $N$ and on the stability of the empirical projector. The EnDCF gain used in practice is an empirical quantity constructed from finite ensembles. In the regime where $N-1<d$, the observation-space sample covariance is singular, requiring a stabilized inverse (Definition~\ref{def:endcf_gain_ensemble}, Subsection~\ref{subsec:ensemble_uq}). Whenever expectations involve this random gain, we invoke the moment condition in Assumption~\ref{ass:gain_moment}.



\subsection{Regularity Assumptions}
\label{subsec:regularity_assumptions}

We collect here the regularity conditions required for the convergence analysis of QPCA-EnDCF. These assumptions are deliberately mild and are designed to isolate the essential probabilistic and spectral inputs needed for the bias--variance decomposition and the associated concentration estimates.

\begin{assumption}[Regularity conditions]
\label{ass:regularity_bias_variance}
Let $d:=mL$. We assume the following:
\begin{enumerate}[label=(\roman*)]
\item The forecast distribution $P_{\mathcal{X}}$ has a finite second moment, that is,
\begin{equation}
\mathbb{E}\|\mathbf{x}\|^{2}<\infty .
\end{equation}

\item The stacked observation error covariance $\mathbf{R}^{(L)}\in\mathbb{R}^{d\times d}$ is symmetric positive definite and satisfies
\begin{equation}
\lambda_{\min}\!\bigl(\mathbf{R}^{(L)}\bigr)\ge r_{\min}>0.
\end{equation}

\item The forecast ensemble $\{\mathbf{x}^{(j),f}\}_{j=1}^N$ consists of independent and identically distributed samples from $P_{\mathcal{X}}$. The corresponding whitened residuals $\{\mathbf{e}^{(j)}\}_{j=1}^N$, defined in Definition~\ref{def:whitened_residual_covariances}, satisfy
\begin{equation}
\mathbb{E}\|\mathbf{e}^{(1)}\|^{4}<\infty,
\end{equation}
or, equivalently, $\mathbb{E}\|\mathbf{e}^{(1)}-\boldsymbol{\mu}_E\|^{4}<\infty$.

\item The population covariance $\boldsymbol{\Sigma}_E$ introduced in Definition~\ref{def:whitened_residual_covariances} admits the spectral decomposition
\begin{equation}
\boldsymbol{\Sigma}_E=\sum_{i=1}^{d}\lambda_i\,\mathbf{v}_i\mathbf{v}_i^\top,
\qquad
\lambda_1\ge\lambda_2\ge\cdots\ge\lambda_d\ge 0.
\end{equation}
\end{enumerate}
\end{assumption}

\noindent
Assumption~\ref{ass:regularity_bias_variance}(i) ensures that all first and second-moment quantities appearing in the filter are well defined. Assumption~\ref{ass:regularity_bias_variance}(ii) guarantees that the whitening transformation is well posed. Assumption~\ref{ass:regularity_bias_variance}(iii) provides the probabilistic input for the covariance concentration estimates developed below; it is strictly weaker than Gaussianity and suffices for mean-square convergence results, while sharper fluctuation bounds may require stronger distributional assumptions. Finally, Assumption~\ref{ass:regularity_bias_variance}(iv) fixes notation for the population spectrum; no spectral gap or decay condition is imposed unless explicitly stated in later results.
The i.i.d.\ sampling assumption in (iii) is a mathematical idealization adopted for deriving the convergence bounds and concentration estimates in~\ref{subsec:fundamental_concentration}--\ref{subsec:main_bv}. 
In practical cycling experiments, 
the analysis ensemble from window $w$ becomes the forecast ensemble for window $w+1$, inducing temporal dependence that violates strict i.i.d.\ conditions.
Nonetheless, the theoretical results provide qualitative guidance: under ergodic dynamics and sufficient mixing, consecutive windows become approximately independent in distribution, and the main conclusions, that sampling variance scales as $\mathcal{O}(1/N)$ and that the projector estimation contribution is governed by $\kappa$ and the cutoff gap $\delta_\kappa$ through stability bounds, remain empirically valid. In regimes where residual energy concentrates in low rank subspaces, the dominant sampling sensitive contribution exhibits rank driven behavior.
The experiments demonstrate that the predicted performance advantages persist under cycling, even though the formal assumptions are satisfied only asymptotically or approximately.
\par\smallskip

\begin{assumption}[Moment control for the empirical gain]
\label{ass:gain_moment}
Whenever expectations involve the random empirical gain, we assume that
\begin{equation}
\mathbb{E}\!\left[\|\mathbf{K}^{\mathrm{DC}}\|_2^{2}\right]<\infty.
\end{equation}
\end{assumption}

This moment condition is mild in practice: when a stabilized inverse such as $(\mathbf{P}_{zz}+\varepsilon\mathbf{I})^{-1}$ is used, the inverse is uniformly bounded, and the condition follows from standard moment bounds on the ensemble anomalies (e.g., if the forecast distribution has exponential or sub-exponential tails, as in the Gaussian case). The pseudoinverse satisfies the condition under similar regularity conditions on the forecast distribution. This is the only property of the gain required to pass from conditional to unconditional mean-square bounds in the analysis.


\subsection{Fundamental Concentration Results}
\label{subsec:fundamental_concentration}

Here, we first provide the basic probabilistic identities and concentration estimates that form the backbone of the convergence analysis. 
The results in this subsection are elementary but essential as they separate deterministic bias from sampling variance and quantify how empirical covariance operators concentrate around their population counterparts. 
Together, these estimates provide the input required for subsequent spectral perturbation and bias--variance analyses.


\begin{lemma}[Bias--variance decomposition for the analysis mean]
\label{lem:mean_error_decomp}
Let
\(
\bar{\mathbf{x}}^{a}=\frac{1}{N}\sum_{j=1}^N\mathbf{x}^{(j),a}
\)
denote the analysis ensemble mean (at a fixed assimilation time; we suppress the time index). Then
\begin{equation}
\bar{\mathbf{x}}^{a}-\mathbf{x}^{\mathrm{true}}
=
\bigl(\mathbb{E}[\bar{\mathbf{x}}^{a}]-\mathbf{x}^{\mathrm{true}}\bigr)
+
\bigl(\bar{\mathbf{x}}^{a}-\mathbb{E}[\bar{\mathbf{x}}^{a}]\bigr),
\end{equation}
and consequently
\begin{equation}
\mathbb{E}\!\left[\|\bar{\mathbf{x}}^{a}-\mathbf{x}^{\mathrm{true}}\|^2\right]
=
\|\mathbb{E}[\bar{\mathbf{x}}^{a}]-\mathbf{x}^{\mathrm{true}}\|^2
+
\mathbb{E}\!\left[\|\bar{\mathbf{x}}^{a}-\mathbb{E}[\bar{\mathbf{x}}^{a}]\|^2\right].
\end{equation}
\end{lemma}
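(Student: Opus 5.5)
The identity in Lemma~\ref{lem:mean_error_decomp} is the standard Pythagorean decomposition of mean-squared error. The plan is to add and subtract $\mathbb{E}[\bar{\mathbf{x}}^{a}]$ and show that the cross term vanishes in expectation.

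\textbf{Step 1: well-posedness.} We first check that $\mathbb{E}[\bar{\mathbf{x}}^{a}]$ exists and that $\mathbb{E}\|\bar{\mathbf{x}}^{a}\|^2<\infty$, since otherwise the right-hand side is not well defined. We take $\mathbf{x}^{\mathrm{true}}$ and $\mathbf{z}^{(w)}$ as fixed (deterministic), per the conditioning convention following Definition~\ref{def:whitened_residual_covariances}. Square integrability of the analysis mean then follows from three ingredients:
\begin{itemize}
\item Assumption~\ref{ass:regularity_bias_variance}(i) for the forecast part;
\item Assumption~\ref{ass:gain_moment} together with the fourth-moment bound on the residuals in Assumption~\ref{ass:regularity_bias_variance}(iii), applied to the increment $\mathbf{K}^{\mathrm{DC}}\boldsymbol{\Delta}_{\mathrm{obs}}$ via Cauchy--Schwarz;
\item the bound $\|\hat{\mathbf{P}}_\kappa\|_2\le 1$.
\end{itemize}
If the lemma is read abstractly, we simply assume $\mathbb{E}\|\bar{\mathbf{x}}^a\|^2<\infty$, which is implicit in the statement.

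\textbf{Step 2: the pointwise identity.} The first displayed identity is immediate algebra: add and subtract the deterministic vector $\mathbb{E}[\bar{\mathbf{x}}^{a}]$.

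\textbf{Step 3: expand the square.} Write $\mathbf{b}:=\mathbb{E}[\bar{\mathbf{x}}^{a}]-\mathbf{x}^{\mathrm{true}}$ (deterministic) and $\boldsymbol{\xi}:=\bar{\mathbf{x}}^{a}-\mathbb{E}[\bar{\mathbf{x}}^{a}]$. Then
\[
\|\mathbf{b}+\boldsymbol{\xi}\|^2=\|\mathbf{b}\|^2+2\,\mathbf{b}^\top\boldsymbol{\xi}+\|\boldsymbol{\xi}\|^2 .
\]
Take expectations. Because $\mathbf{b}$ is nonrandom and $\boldsymbol{\xi}$ is integrable, linearity gives $\mathbb{E}[\mathbf{b}^\top\boldsymbol{\xi}]=\mathbf{b}^\top\mathbb{E}[\boldsymbol{\xi}]=0$. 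This yields the second identity.

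\textbf{Main obstacle.} The only real subtlety is integrability in Step 1. If $\mathbf{x}^{\mathrm{true}}$ were random and correlated with the ensemble, $\mathbf{b}$ would no longer be deterministic and the cross term need not vanish. Fixing $\mathbf{x}^{\mathrm{true}}$ and $\mathbf{z}^{(w)}$, as the paper does throughout, removes this issue.
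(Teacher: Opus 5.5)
Your proposal is correct and follows the paper's proof. Both split the error into the deterministic bias $\mathbf{b}$ plus the zero-mean fluctuation $\boldsymbol{\xi}$, expand the square, and remove the cross term via $\langle \mathbf{b},\mathbb{E}\boldsymbol{\xi}\rangle=0$; your Step~1 is an extra integrability check the paper omits. One small correction there: with only $\mathbb{E}\|\mathbf{K}^{\mathrm{DC}}\|_2^2<\infty$, Cauchy--Schwarz gives integrability of the increment, not square integrability (that would need a fourth moment of the gain). Integrability is all the identity needs, though: since $\mathbf{b}$ is a fixed finite vector, if $\mathbb{E}\|\boldsymbol{\xi}\|^2=\infty$ then both sides equal $+\infty$.
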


\begin{proof}
Define the deterministic bias vector
\(
\mathbf{b}:=\mathbb{E}[\bar{\mathbf{x}}^{a}]-\mathbf{x}^{\mathrm{true}}
\)
	and the zero-mean fluctuation
	\(
	\boldsymbol{\xi}:=\bar{\mathbf{x}}^{a}-\mathbb{E}[\bar{\mathbf{x}}^{a}],
	\)
	so that $\mathbb{E}[\boldsymbol{\xi}]=\mathbf{0}$. Writing
	\(
	\bar{\mathbf{x}}^{a}-\mathbf{x}^{\mathrm{true}}=\mathbf{b}+\boldsymbol{\xi},
	\)
	we compute
\begin{equation}
	\mathbb{E}\|\mathbf{b}+\boldsymbol{\xi}\|^2
	=
	\|\mathbf{b}\|^2
	+
	2\langle \mathbf{b},\mathbb{E}\boldsymbol{\xi}\rangle
	+
	\mathbb{E}\|\boldsymbol{\xi}\|^2
	=
	\|\mathbf{b}\|^2+\mathbb{E}\|\boldsymbol{\xi}\|^2,
\end{equation}
	which yields the stated identity.
	\end{proof}

\begin{lemma}[Unbiasedness of the sample covariance]
\label{lem:unbiasedness_sample_cov}
Let $\boldsymbol{\Sigma}_E$ and $\mathbf{C}_E$ be as in Definition~\ref{def:whitened_residual_covariances}. Under Assumption~\ref{ass:regularity_bias_variance}(iii) and for all $N\ge 2$,
\begin{equation}
\mathbb{E}[\mathbf{C}_E]=\boldsymbol{\Sigma}_E.
\end{equation}
\end{lemma}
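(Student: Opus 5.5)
The plan is to use the standard algebraic identity for the centered sample covariance and take expectations term by term, using only the i.i.d.\ structure and the finite second moments from Assumption~\ref{ass:regularity_bias_variance}(iii). Writing $\mathbf{u}^{(j)}:=\mathbf{e}^{(j)}-\boldsymbol{\mu}_E$ and $\bar{\mathbf{u}}:=\bar{\mathbf{e}}-\boldsymbol{\mu}_E$, we have $\mathbf{e}^{(j)}-\bar{\mathbf{e}}=\mathbf{u}^{(j)}-\bar{\mathbf{u}}$. Centering at the population mean therefore leaves $\mathbf{C}_E$ unchanged. The $\mathbf{u}^{(j)}$ are i.i.d., have zero mean, and have covariance $\boldsymbol{\Sigma}_E$. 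All these moments exist because the fourth-moment hypothesis in (iii) implies finite second moments.

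The first step is the expansion
\begin{equation*}
\sum_{j=1}^N(\mathbf{u}^{(j)}-\bar{\mathbf{u}})(\mathbf{u}^{(j)}-\bar{\mathbf{u}})^\top
=\sum_{j=1}^N\mathbf{u}^{(j)}(\mathbf{u}^{(j)})^\top-N\,\bar{\mathbf{u}}\bar{\mathbf{u}}^\top,
\end{equation*}
which follows because the cross terms combine using $\sum_j\mathbf{u}^{(j)}=N\bar{\mathbf{u}}$.

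The second step takes expectations of each piece. The first sum has expectation $N\boldsymbol{\Sigma}_E$. For the second piece,
\begin{equation*}
\mathbb{E}[\bar{\mathbf{u}}\bar{\mathbf{u}}^\top]=N^{-2}\sum_{j,l}\mathbb{E}[\mathbf{u}^{(j)}(\mathbf{u}^{(l)})^\top]=N^{-1}\boldsymbol{\Sigma}_E .
\end{equation*}
Here the off-diagonal terms $j\neq l$ vanish by independence and zero mean; this is the only place independence is used. The total expectation is therefore $(N-1)\boldsymbol{\Sigma}_E$.

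The last step divides by $N-1$, which is legitimate since $N\ge2$, giving $\mathbb{E}[\mathbf{C}_E]=\boldsymbol{\Sigma}_E$. The argument is elementary and has no real obstacle. The only points to check are the integrability that justifies exchanging expectation with the finite sums, and the remark that the conditioning on the fixed $\mathbf{z}^{(w)}$ only shifts the mean, so it does not affect the computation.
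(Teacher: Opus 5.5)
Your proposal is correct and follows essentially the same route as the paper. Both recenter at $\boldsymbol{\mu}_E$, write $(N-1)\mathbf{C}_E=\sum_{j}\tilde{\mathbf{e}}^{(j)}(\tilde{\mathbf{e}}^{(j)})^\top-N\,\bar{\tilde{\mathbf{e}}}\,\bar{\tilde{\mathbf{e}}}^\top$, and use $\mathbb{E}[\bar{\tilde{\mathbf{e}}}\,\bar{\tilde{\mathbf{e}}}^\top]=\boldsymbol{\Sigma}_E/N$; you simply make the cross-term cancellation and the integrability check more explicit.
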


\begin{proof}
Unbiasedness of the sample covariance with Bessel correction holds for independent samples with finite second moments. Let
\(
\tilde{\mathbf{e}}^{(j)}:=\mathbf{e}^{(j)}-\boldsymbol{\mu}_E
\)
and
\(
\bar{\tilde{\mathbf{e}}}:=\frac{1}{N}\sum_{j=1}^N\tilde{\mathbf{e}}^{(j)}.
\)
Then $\mathbf{e}^{(j)}-\bar{\mathbf{e}}=\tilde{\mathbf{e}}^{(j)}-\bar{\tilde{\mathbf{e}}}$ and
\begin{equation}
\mathbf{C}_E
=
\frac{1}{N-1}\sum_{j=1}^N \tilde{\mathbf{e}}^{(j)}(\tilde{\mathbf{e}}^{(j)})^\top
-
\frac{N}{N-1}\bar{\tilde{\mathbf{e}}}\,\bar{\tilde{\mathbf{e}}}^\top.
\end{equation}
Taking expectations and using
\(
\mathbb{E}[\tilde{\mathbf{e}}^{(j)}(\tilde{\mathbf{e}}^{(j)})^\top]=\boldsymbol{\Sigma}_E
\)
together with
\(
\mathbb{E}[\bar{\tilde{\mathbf{e}}}\,\bar{\tilde{\mathbf{e}}}^\top]
=\mathrm{Cov}(\bar{\tilde{\mathbf{e}}})
=\boldsymbol{\Sigma}_E/N,
\)
we obtain $\mathbb{E}[\mathbf{C}_E]=\boldsymbol{\Sigma}_E$.
\end{proof}

\begin{lemma}[Mean-square Frobenius deviation of the sample covariance]
\label{lem:frobenius_concentration}
Let $\boldsymbol{\Sigma}_E$ and $\mathbf{C}_E$ be as in Definition~\ref{def:whitened_residual_covariances}. Under Assumption~\ref{ass:regularity_bias_variance}(iii), there exists a constant $C_{\mathrm{cov}}>0$, depending only on the dimension $d$ and on standardized fourth moments of $\mathbf{e}^{(1)}$, such that for all $N\ge 2$,
\begin{equation}
\mathbb{E}\!\left[\|\mathbf{C}_E-\boldsymbol{\Sigma}_E\|_F^2\right]
\le
\frac{C_{\mathrm{cov}}}{N-1}\,
\mathrm{tr}\!\left(\boldsymbol{\Sigma}_E^2\right).
\end{equation}
In particular, $\mathbb{E}\|\mathbf{C}_E-\boldsymbol{\Sigma}_E\|_F^2=\mathcal{O}(N^{-1})$ as $N\to\infty$.
\end{lemma}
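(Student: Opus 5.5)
We work with the centered residuals $\tilde{\mathbf{e}}^{(j)}:=\mathbf{e}^{(j)}-\boldsymbol{\mu}_E$, which are i.i.d., have mean zero and covariance $\boldsymbol{\Sigma}_E$, and satisfy $\mathbb{E}\|\tilde{\mathbf{e}}^{(1)}\|^4<\infty$ by Assumption~\ref{ass:regularity_bias_variance}(iii). As in the proof of Lemma~\ref{lem:unbiasedness_sample_cov}, write
\begin{equation*}
\mathbf{C}_E-\boldsymbol{\Sigma}_E
=
\underbrace{\frac{1}{N-1}\sum_{j=1}^N\bigl(\tilde{\mathbf{e}}^{(j)}(\tilde{\mathbf{e}}^{(j)})^\top-\boldsymbol{\Sigma}_E\bigr)}_{=:T_1}
+\underbrace{\frac{1}{N-1}\boldsymbol{\Sigma}_E}_{=:T_2}
-\underbrace{\frac{N}{N-1}\bar{\tilde{\mathbf{e}}}\,\bar{\tilde{\mathbf{e}}}^\top}_{=:T_3}.
\end{equation*}
We then use $\|T_1+T_2-T_3\|_F^2\le 3(\|T_1\|_F^2+\|T_2\|_F^2+\|T_3\|_F^2)$ and bound the three terms separately. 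An exact expansion is possible but not needed, since the constant may depend on $d$ and on the fourth moments.

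\emph{Term $T_1$.} The summands $\mathbf{Y}_j:=\tilde{\mathbf{e}}^{(j)}(\tilde{\mathbf{e}}^{(j)})^\top-\boldsymbol{\Sigma}_E$ are i.i.d. and have mean zero. Independence therefore kills the cross terms in the Frobenius inner product, which gives
\begin{equation*}
\mathbb{E}\|T_1\|_F^2=\frac{N}{(N-1)^2}\,\mathbb{E}\|\mathbf{Y}_1\|_F^2
=\frac{N}{(N-1)^2}\bigl(\mathbb{E}\|\tilde{\mathbf{e}}^{(1)}\|^4-\mathrm{tr}(\boldsymbol{\Sigma}_E^2)\bigr).
\end{equation*}
Here we used $\|\mathbf{u}\mathbf{u}^\top\|_F^2=\|\mathbf{u}\|^4$. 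Since $N/(N-1)^2\le 2/(N-1)$ for $N\ge2$, this term is $\mathcal{O}(1/(N-1))$.

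\emph{Term $T_2$.} We have $\|T_2\|_F^2=\mathrm{tr}(\boldsymbol{\Sigma}_E^2)/(N-1)^2\le \mathrm{tr}(\boldsymbol{\Sigma}_E^2)/(N-1)$.

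\emph{Term $T_3$.} We have $\|T_3\|_F^2=\frac{N^2}{(N-1)^2}\|\bar{\tilde{\mathbf{e}}}\|^4$. Expanding $\|\bar{\tilde{\mathbf{e}}}\|^4=N^{-4}\bigl(\sum_{j,k}\langle\tilde{\mathbf{e}}^{(j)},\tilde{\mathbf{e}}^{(k)}\rangle\bigr)^2$, only index patterns with paired indices survive the expectation. This yields
\begin{equation*}
\mathbb{E}\|\bar{\tilde{\mathbf{e}}}\|^4
= N^{-3}\,\mathbb{E}\|\tilde{\mathbf{e}}^{(1)}\|^4
+ N^{-3}(N-1)\bigl((\mathrm{tr}\boldsymbol{\Sigma}_E)^2+2\,\mathrm{tr}(\boldsymbol{\Sigma}_E^2)\bigr)
=\mathcal{O}(N^{-2}).
\end{equation*}
Hence $\mathbb{E}\|T_3\|_F^2=\mathcal{O}(N^{-2})\le \mathcal{O}(1/(N-1))$.

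\emph{Normalization.} The main obstacle is expressing every piece as a multiple of $\mathrm{tr}(\boldsymbol{\Sigma}_E^2)$, as the statement requires. For the second-moment terms this is elementary: $(\mathrm{tr}\boldsymbol{\Sigma}_E)^2\le d\,\mathrm{tr}(\boldsymbol{\Sigma}_E^2)$ by Cauchy--Schwarz on the eigenvalues. For the fourth moment, define the standardized kurtosis-type constant $\kappa_4:=\mathbb{E}\|\tilde{\mathbf{e}}^{(1)}\|^4/(\mathrm{tr}\boldsymbol{\Sigma}_E)^2$, which is finite by assumption. Then $\mathbb{E}\|\tilde{\mathbf{e}}^{(1)}\|^4\le \kappa_4 d\,\mathrm{tr}(\boldsymbol{\Sigma}_E^2)$. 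If $\boldsymbol{\Sigma}_E=0$, the residuals are almost surely constant and both sides of the claimed inequality vanish, so this case is trivial.

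Collecting the bounds gives
\begin{equation*}
\mathbb{E}\|\mathbf{C}_E-\boldsymbol{\Sigma}_E\|_F^2\le \frac{C_{\mathrm{cov}}}{N-1}\,\mathrm{tr}(\boldsymbol{\Sigma}_E^2),
\end{equation*}
with $C_{\mathrm{cov}}$ depending only on $d$ and $\kappa_4$. In the Gaussian case one has $\kappa_4\le 3$, so $C_{\mathrm{cov}}$ reduces to a function of $d$ alone.
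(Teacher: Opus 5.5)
Your proof is correct and follows essentially the same route as the paper: you center the residuals, split $\mathbf{C}_E-\boldsymbol{\Sigma}_E$ into the i.i.d.\ mean-zero sum and the sample-mean correction, use independence together with the $\mathcal{O}(N^{-2})$ fourth moment of $\bar{\tilde{\mathbf{e}}}$, and convert $\mathbb{E}\|\tilde{\mathbf{e}}^{(1)}\|^4$ into a multiple of $\mathrm{tr}(\boldsymbol{\Sigma}_E^2)$ via $(\mathrm{tr}\,\boldsymbol{\Sigma}_E)^2\le d\,\mathrm{tr}(\boldsymbol{\Sigma}_E^2)$. Your exact moment identities, the explicit kurtosis constant $\kappa_4$, and the separate treatment of the case $\boldsymbol{\Sigma}_E=\mathbf{0}$ make precise the ``standardized fourth-moment control'' that the paper leaves implicit in $C_{\mathrm{cov}}$.
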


\begin{proof}
Let
\(
\tilde{\mathbf{e}}^{(j)}:=\mathbf{e}^{(j)}-\boldsymbol{\mu}_E
\)
and
\(
\bar{\tilde{\mathbf{e}}}:=\frac{1}{N}\sum_{j=1}^N\tilde{\mathbf{e}}^{(j)}.
\)
A direct expansion yields
\begin{equation}
\mathbf{C}_E-\boldsymbol{\Sigma}_E
=
\frac{1}{N-1}\sum_{j=1}^N
\bigl(\tilde{\mathbf{e}}^{(j)}(\tilde{\mathbf{e}}^{(j)})^\top-\boldsymbol{\Sigma}_E\bigr)
-
\frac{N}{N-1}
\Bigl(\bar{\tilde{\mathbf{e}}}\,\bar{\tilde{\mathbf{e}}}^\top-\frac{1}{N}\boldsymbol{\Sigma}_E\Bigr).
\end{equation}
Applying the inequality $(a+b)^2\le 2a^2+2b^2$, using independence of the samples, and invoking the fourth-moment bound
\(
\mathbb{E}\|\bar{\tilde{\mathbf{e}}}\|^4\lesssim N^{-2}\mathbb{E}\|\tilde{\mathbf{e}}^{(1)}\|^4
\)
(which follows from Assumption~\ref{ass:regularity_bias_variance}(iii)), we obtain
\begin{equation}
\mathbb{E}\!\left[\|\mathbf{C}_E-\boldsymbol{\Sigma}_E\|_F^2\right]
\le
\frac{C_{\mathrm{cov}}}{N-1}\,
\mathbb{E}\!\left[
\bigl\|
\tilde{\mathbf{e}}^{(1)}(\tilde{\mathbf{e}}^{(1)})^\top-\boldsymbol{\Sigma}_E
\bigr\|_F^2
\right],
\end{equation}
	for a constant $C_{\mathrm{cov}}>0$ depending only on $d$ and standardized fourth moments. Finally, since
	\(
	\|\tilde{\mathbf{e}}^{(1)}(\tilde{\mathbf{e}}^{(1)})^\top-\boldsymbol{\Sigma}_E\|_F^2
	\le
	2\|\tilde{\mathbf{e}}^{(1)}(\tilde{\mathbf{e}}^{(1)})^\top\|_F^2+2\|\boldsymbol{\Sigma}_E\|_F^2
	=
		2\|\tilde{\mathbf{e}}^{(1)}\|^4+2\,\mathrm{tr}(\boldsymbol{\Sigma}_E^2),
		\)
		and the standardized fourth-moment control implicit in the constant $C_{\mathrm{cov}}$ ensures that $\mathbb{E}\|\tilde{\mathbf{e}}^{(1)}\|^4$ is bounded by a dimension-dependent multiple of $\mathrm{tr}(\boldsymbol{\Sigma}_E^2)$. In particular, if $\tilde{\mathbf{e}}^{(1)}$ is Gaussian then
\begin{equation}
		\mathbb{E}\|\tilde{\mathbf{e}}^{(1)}\|^4
		=
		\bigl(\mathrm{tr}(\boldsymbol{\Sigma}_E)\bigr)^2 + 2\,\mathrm{tr}(\boldsymbol{\Sigma}_E^2)
		\le
		(d+2)\,\mathrm{tr}(\boldsymbol{\Sigma}_E^2),
\end{equation}
		where the final inequality uses $\bigl(\mathrm{tr}(\boldsymbol{\Sigma}_E)\bigr)^2\le d\,\mathrm{tr}(\boldsymbol{\Sigma}_E^2)$. Absorbing this ratio into $C_{\mathrm{cov}}$ yields the stated bound.
\end{proof}

\begin{remark}[Gaussian sharpening of Lemma~\ref{lem:frobenius_concentration}]
\label{rem:gaussian_frobenius}
If, in addition, $\tilde{\mathbf{e}}^{(1)}$ is Gaussian, then $(N-1)\mathbf{C}_E$ follows a Wishart distribution and the bound of Lemma~\ref{lem:frobenius_concentration} can be replaced by the exact identity
\begin{equation}
\mathbb{E}\|\mathbf{C}_E-\boldsymbol{\Sigma}_E\|_F^2
=
\frac{\bigl(\mathrm{tr}(\boldsymbol{\Sigma}_E)\bigr)^2+\mathrm{tr}(\boldsymbol{\Sigma}_E^2)}{N-1}.
\end{equation}
Applying $\bigl(\mathrm{tr}(\boldsymbol{\Sigma}_E)\bigr)^2\le d\,\mathrm{tr}(\boldsymbol{\Sigma}_E^2)$ then gives $\mathbb{E}\|\mathbf{C}_E-\boldsymbol{\Sigma}_E\|_F^2\le \frac{d+1}{N-1}\,\mathrm{tr}(\boldsymbol{\Sigma}_E^2)$.
\end{remark}

\begin{lemma}[Fourth-moment bound for the sample mean]
\label{lem:sample_mean_fourth_moment}
Let $\{\mathbf{e}^{(j)}\}_{j=1}^N$ be i.i.d.\ in $\mathbb{R}^d$ with mean $\boldsymbol{\mu}_E$ and finite fourth moment $\mathbb{E}\|\mathbf{e}^{(1)}-\boldsymbol{\mu}_E\|^4<\infty$. Define $\bar{\mathbf{e}}:=\frac{1}{N}\sum_{j=1}^N\mathbf{e}^{(j)}$. Then there exists a constant $C_{\mathrm{mean}}>0$, depending only on $d$, such that for all $N\ge 2$,
\begin{equation}
\mathbb{E}\|\bar{\mathbf{e}}-\boldsymbol{\mu}_E\|^4
\le
\frac{C_{\mathrm{mean}}}{N^2}\left(\mathbb{E}\|\mathbf{e}^{(1)}-\boldsymbol{\mu}_E\|^4+\bigl(\mathbb{E}\|\mathbf{e}^{(1)}-\boldsymbol{\mu}_E\|^2\bigr)^2\right).
\end{equation}
In particular, $\bigl(\mathbb{E}\|\bar{\mathbf{e}}-\boldsymbol{\mu}_E\|^4\bigr)^{1/2}=\mathcal{O}(N^{-1})$ as $N\to\infty$.
\end{lemma}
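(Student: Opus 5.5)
We center the samples and expand the fourth power of the norm of a sum of independent zero-mean vectors. Set $\tilde{\mathbf{e}}^{(j)}:=\mathbf{e}^{(j)}-\boldsymbol{\mu}_E$, so that $\bar{\mathbf{e}}-\boldsymbol{\mu}_E=\frac{1}{N}\mathbf{S}_N$ with $\mathbf{S}_N:=\sum_{j=1}^N\tilde{\mathbf{e}}^{(j)}$. It then suffices to show
\begin{equation*}
\mathbb{E}\|\mathbf{S}_N\|^4\le C_{\mathrm{mean}}\Bigl(N\,\mathbb{E}\|\tilde{\mathbf{e}}^{(1)}\|^4+N^2\bigl(\mathbb{E}\|\tilde{\mathbf{e}}^{(1)}\|^2\bigr)^2\Bigr).
\end{equation*}
Dividing by $N^4$ gives the stated bound, since $N^{-3}\le N^{-2}$ for the first term.

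Write $\|\mathbf{S}_N\|^4=\bigl(\sum_{i,j}\langle\tilde{\mathbf{e}}^{(i)},\tilde{\mathbf{e}}^{(j)}\rangle\bigr)^2=\sum_{i,j,k,l}\langle\tilde{\mathbf{e}}^{(i)},\tilde{\mathbf{e}}^{(j)}\rangle\langle\tilde{\mathbf{e}}^{(k)},\tilde{\mathbf{e}}^{(l)}\rangle$ and take expectations term by term. Finiteness of every term follows from Assumption~\ref{ass:regularity_bias_variance}(iii) together with Cauchy--Schwarz. By independence and $\mathbb{E}\tilde{\mathbf{e}}^{(j)}=\mathbf{0}$, any index appearing exactly once gives expectation zero, so only the following patterns survive.

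\textbf{(a)} $i=j=k=l$: there are $N$ such terms, each equal to $\mathbb{E}\|\tilde{\mathbf{e}}^{(1)}\|^4$.

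\textbf{(b)} $i=j\neq k=l$: each term equals $\mathbb{E}\|\tilde{\mathbf{e}}^{(i)}\|^2\,\mathbb{E}\|\tilde{\mathbf{e}}^{(k)}\|^2=(\mathbb{E}\|\tilde{\mathbf{e}}^{(1)}\|^2)^2$, and there are $N(N-1)$ of them.

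\textbf{(c)} $i=k\neq j=l$ and $i=l\neq j=k$: each term equals $\mathbb{E}\langle\tilde{\mathbf{e}}^{(i)},\tilde{\mathbf{e}}^{(j)}\rangle^2=\mathrm{tr}(\boldsymbol{\Sigma}_E^2)\le(\mathrm{tr}\,\boldsymbol{\Sigma}_E)^2=(\mathbb{E}\|\tilde{\mathbf{e}}^{(1)}\|^2)^2$, with $2N(N-1)$ terms in total.

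Summing the three groups gives $\mathbb{E}\|\mathbf{S}_N\|^4\le N\,\mathbb{E}\|\tilde{\mathbf{e}}^{(1)}\|^4+3N(N-1)(\mathbb{E}\|\tilde{\mathbf{e}}^{(1)}\|^2)^2$. Hence the bound holds with $C_{\mathrm{mean}}=3$, which is in fact dimension-free and so certainly depends only on $d$.

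The only delicate point is the bookkeeping of index coincidences. In particular, one must check that patterns such as three equal indices with one distinct, e.g.\ $i=j=k\neq l$, vanish: the lone index $l$ contributes a factor $\mathbb{E}\tilde{\mathbf{e}}^{(l)}=\mathbf{0}$ by independence. One must also check that the cross-pair terms in (c) are controlled by $\mathrm{tr}(\boldsymbol{\Sigma}_E^2)\le(\mathrm{tr}\,\boldsymbol{\Sigma}_E)^2$, which holds because $\boldsymbol{\Sigma}_E$ is positive semidefinite. The final $\mathcal{O}(N^{-1})$ statement follows by taking square roots, since both moments on the right-hand side are finite by assumption.
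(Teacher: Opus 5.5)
Your proof is correct, and it takes a somewhat different route from the paper's.

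The paper works coordinatewise. It applies the scalar identity $\mathbb{E}[(\bar X_i)^4]=N^{-4}\bigl(N\,\mathbb{E}[(X_i^{(1)})^4]+3N(N-1)(\mathbb{E}[(X_i^{(1)})^2])^2\bigr)$ to each component. It then recombines the components via $\|\bar{\mathbf{X}}\|^4\le d\sum_i(\bar X_i)^4$, together with $\sum_i\mathbb{E}[(X_i^{(1)})^4]\le\mathbb{E}\|\mathbf{X}^{(1)}\|^4$ and $\sum_i(\mathbb{E}[(X_i^{(1)})^2])^2\le(\mathbb{E}\|\mathbf{X}^{(1)}\|^2)^2$. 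This yields $C_{\mathrm{mean}}=3d$.

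You instead expand $\|\mathbf{S}_N\|^4$ directly in inner products. The same index-pairing bookkeeping is then done once at the vector level, and the cross-pair terms appear as $\mathrm{tr}(\boldsymbol{\Sigma}_E^2)$ rather than being handled coordinate by coordinate. This gives you the exact identity $\mathbb{E}\|\mathbf{S}_N\|^4=N\,\mathbb{E}\|\tilde{\mathbf{e}}^{(1)}\|^4+N(N-1)\bigl[(\mathrm{tr}\,\boldsymbol{\Sigma}_E)^2+2\,\mathrm{tr}(\boldsymbol{\Sigma}_E^2)\bigr]$ and a dimension-free constant $C_{\mathrm{mean}}=3$. You thereby avoid the factor $d$ the paper loses in its Cauchy--Schwarz step across coordinates, so the $d$-dependence the lemma allows is not actually needed. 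The paper's version is marginally more elementary, since it only uses the familiar scalar identity, but yours is sharper.

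One small point: you do not need to invoke Assumption~\ref{ass:regularity_bias_variance}(iii). Finiteness of every term follows from the lemma's own fourth-moment hypothesis via H\"older, including the vanishing three-plus-one patterns, which need $\mathbb{E}\|\tilde{\mathbf{e}}^{(1)}\|^3<\infty$.
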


\begin{proof}
Let $\mathbf{X}^{(j)}:=\mathbf{e}^{(j)}-\boldsymbol{\mu}_E$ and $\bar{\mathbf{X}}:=\frac{1}{N}\sum_{j=1}^N\mathbf{X}^{(j)}=\bar{\mathbf{e}}-\boldsymbol{\mu}_E$. For each coordinate $i\in\{1,\ldots,d\}$, the scalar random variables $\{X_i^{(j)}\}_{j=1}^N$ are i.i.d.\ with mean $0$, and the fourth-moment identity for sums yields
\begin{equation}
\mathbb{E}\bigl[(\bar X_i)^4\bigr]
=
\frac{1}{N^4}\left(N\,\mathbb{E}\bigl[(X_i^{(1)})^4\bigr] + 3N(N-1)\bigl(\mathbb{E}[(X_i^{(1)})^2]\bigr)^2\right)
\le
\frac{\mathbb{E}\bigl[(X_i^{(1)})^4\bigr]}{N^3}
\;+\;
\frac{3\bigl(\mathbb{E}[(X_i^{(1)})^2]\bigr)^2}{N^2}.
\end{equation}
Next, using $(\sum_{i=1}^d a_i)^2\le d\sum_{i=1}^d a_i^2$ with $a_i=(\bar X_i)^2$ gives
\begin{equation}
\|\bar{\mathbf{X}}\|^4
=
\left(\sum_{i=1}^d (\bar X_i)^2\right)^2
\le
d\sum_{i=1}^d (\bar X_i)^4.
\end{equation}
Taking expectations and summing the coordinate bounds yields
\begin{equation}
\mathbb{E}\|\bar{\mathbf{X}}\|^4
\le
\frac{d}{N^3}\sum_{i=1}^d \mathbb{E}\bigl[(X_i^{(1)})^4\bigr]
\;+\;
\frac{3d}{N^2}\sum_{i=1}^d \bigl(\mathbb{E}[(X_i^{(1)})^2]\bigr)^2.
\end{equation}
Finally, $\sum_i \mathbb{E}[(X_i^{(1)})^4]\le \mathbb{E}\|\mathbf{X}^{(1)}\|^4$ and
$\sum_i (\mathbb{E}[(X_i^{(1)})^2])^2\le (\sum_i \mathbb{E}[(X_i^{(1)})^2])^2=(\mathbb{E}\|\mathbf{X}^{(1)}\|^2)^2$.
Absorbing the factor $d$ into $C_{\mathrm{mean}}$ gives the stated bound.
\end{proof}



\subsection{Spectral Perturbation Bounds}
\label{subsec:spectral_perturbation}

We next convert matrix-level covariance deviations into spectral stability estimates. 
This step is essential for QPCA-EnDCF, whose update is governed by the rank-$\kappa$ empirical projector $\hat{\mathbf{P}}_\kappa$. 
The results below are deterministic perturbation inequalities: randomness enters only through the perturbation $\mathbf{C}_E-\boldsymbol{\Sigma}_E$, which will later be controlled in expectation via the concentration estimates of~\ref{subsec:fundamental_concentration}.

\begin{lemma}[Spectral perturbation and projector stability]
\label{lem:covariance_concentration}
Let $\boldsymbol{\Sigma}_E$, $\mathbf{C}_E$, and their eigendecompositions be as in Definition~\ref{def:spectral_decompositions}. Then:
\begin{enumerate}[label=(\roman*)]
\item For each $i=1,\ldots,d$,
\begin{equation}
|\hat{\lambda}_i-\lambda_i|
\le
\|\mathbf{C}_E-\boldsymbol{\Sigma}_E\|_2 .
\end{equation}

	\item Fix $i\in\{1,\ldots,d\}$ and assume that $\lambda_i$ is isolated, i.e.,
\begin{equation}
	\mathrm{gap}_i
	:=
\min\{\lambda_{i-1}-\lambda_i,\ \lambda_i-\lambda_{i+1}\}
\ge
\delta_i>0,
\qquad
(\lambda_0:=+\infty,\ \lambda_{d+1}:=0).
\end{equation}
	Choose the sign of $\hat{\mathbf{v}}_i$ so that $\langle \hat{\mathbf{v}}_i,\mathbf{v}_i\rangle\ge 0$. Then
\begin{equation}
	\sin\angle(\hat{\mathbf{v}}_i,\mathbf{v}_i)
	=
	\|(\mathbf{I}-\mathbf{v}_i\mathbf{v}_i^\top)\hat{\mathbf{v}}_i\|_2
	\le
	\frac{\|\mathbf{C}_E-\boldsymbol{\Sigma}_E\|_2}{\delta_i},
	\qquad
	\|\hat{\mathbf{v}}_i-\mathbf{v}_i\|_2
	\le
	\frac{\sqrt{2}\,\|\mathbf{C}_E-\boldsymbol{\Sigma}_E\|_2}{\delta_i}.
\end{equation}

\item Fix $\kappa\in\{1,\ldots,d\}$ and assume a cutoff gap
\begin{equation}
\lambda_\kappa-\lambda_{\kappa+1}\ge \delta_\kappa>0,
\qquad (\lambda_{d+1}:=0).
\end{equation}
Let
\(
\mathbf{P}_\kappa:=\sum_{i=1}^{\kappa}\mathbf{v}_i\mathbf{v}_i^\top
\)
and
\(
\hat{\mathbf{P}}_\kappa:=\sum_{i=1}^{\kappa}\hat{\mathbf{v}}_i\hat{\mathbf{v}}_i^\top.
\)
	Then
\begin{equation}
	\|\hat{\mathbf{P}}_\kappa-\mathbf{P}_\kappa\|_F
	\le
	\frac{\sqrt{2\kappa}}{\delta_\kappa}\,
	\|\mathbf{C}_E-\boldsymbol{\Sigma}_E\|_2
	\le
	\frac{\sqrt{2\kappa}}{\delta_\kappa}\,
	\|\mathbf{C}_E-\boldsymbol{\Sigma}_E\|_F.
\end{equation}

\item With $\boldsymbol{\Sigma}_E^\kappa:=\mathbf{P}_\kappa\boldsymbol{\Sigma}_E\mathbf{P}_\kappa$
and $\mathbf{C}_E^\kappa:=\hat{\mathbf{P}}_\kappa\mathbf{C}_E\hat{\mathbf{P}}_\kappa$, one has
\begin{equation}
\|\mathbf{C}_E^\kappa-\boldsymbol{\Sigma}_E^\kappa\|_F
\;\le\;
\|\mathbf{P}_\kappa(\mathbf{C}_E-\boldsymbol{\Sigma}_E)\mathbf{P}_\kappa\|_F
+
\|(\hat{\mathbf{P}}_\kappa-\mathbf{P}_\kappa)\mathbf{C}_E\|_F
+
\|\mathbf{C}_E(\hat{\mathbf{P}}_\kappa-\mathbf{P}_\kappa)\|_F .
\end{equation}
\end{enumerate}
\end{lemma}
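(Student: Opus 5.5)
The four parts are deterministic perturbation inequalities for the symmetric pair $(\boldsymbol{\Sigma}_E,\mathbf{C}_E)$. I would write $\boldsymbol{\Delta}:=\mathbf{C}_E-\boldsymbol{\Sigma}_E$ throughout and prove each part with a classical tool.

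For (i), I would apply Weyl's inequality. By the Courant--Fischer min--max characterization, for any unit $\mathbf{u}$ we have $\mathbf{u}^\top\mathbf{C}_E\mathbf{u}=\mathbf{u}^\top\boldsymbol{\Sigma}_E\mathbf{u}+\mathbf{u}^\top\boldsymbol{\Delta}\mathbf{u}$. The last term lies in $[-\|\boldsymbol{\Delta}\|_2,\|\boldsymbol{\Delta}\|_2]$, so each ordered eigenvalue moves by at most $\|\boldsymbol{\Delta}\|_2$. The padding convention $\hat\lambda_i=0$ for $i>\min(d,N-1)$ is consistent with this, because $\mathbf{C}_E$ is a genuine $d\times d$ matrix whose remaining eigenvalues are zero.

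For (ii), I would use the Davis--Kahan $\sin\theta$ argument in its one-dimensional form. Write $\hat{\mathbf{v}}_i=c\,\mathbf{v}_i+\mathbf{w}$ with $\mathbf{w}\perp\mathbf{v}_i$. Then $(\boldsymbol{\Sigma}_E-\hat\lambda_i\mathbf{I})\hat{\mathbf{v}}_i=-\boldsymbol{\Delta}\hat{\mathbf{v}}_i$. Projecting this identity onto $\mathbf{v}_i^\perp$ gives $\|(\boldsymbol{\Sigma}_E-\hat\lambda_i)\mathbf{w}\|\le\|\boldsymbol{\Delta}\|_2$. The main obstacle sits here. The statement uses the population gap $\delta_i$ alone, but the clean Davis--Kahan bound needs the separation of $\hat\lambda_i$ from $\{\lambda_j\}_{j\ne i}$, which is only about $\delta_i-\|\boldsymbol{\Delta}\|_2$. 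I would therefore split into two cases. If $\|\boldsymbol{\Delta}\|_2\ge\delta_i$, the bound is trivial, since $\sin\angle\le1$ and $\|\hat{\mathbf{v}}_i-\mathbf{v}_i\|\le\sqrt2$ under the sign choice. Otherwise I would invoke the Davis--Kahan theorem as stated with the population gap. The version of Yu--Wang--Samworth gives $2\|\boldsymbol{\Delta}\|_2/\delta_i$, so I would accept a constant there, or cite the symmetric $\sin\theta$ theorem whose separation is measured between $\mathrm{spec}$ of the restricted operators. If an extra factor is forced, I would record it and adjust the constant rather than the structure of the proof. Once the angle bound is in hand, the bound on $\|\hat{\mathbf{v}}_i-\mathbf{v}_i\|$ follows from $\|\hat{\mathbf{v}}_i-\mathbf{v}_i\|^2=2(1-\cos\theta)\le2\sin^2\theta$, which holds because $\cos\theta\ge0$.

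For (iii), I would apply the subspace Davis--Kahan theorem to the leading $\kappa$-dimensional invariant subspaces. This gives $\|\sin\Theta\|_2\le\|\boldsymbol{\Delta}\|_2/\delta_\kappa$, with the same two-case caveat as in (ii). Then I would use the identity $\|\hat{\mathbf{P}}_\kappa-\mathbf{P}_\kappa\|_F=\sqrt2\,\|\sin\Theta\|_F$ for equal-rank projectors. Since $\sin\Theta$ has at most $\kappa$ nonzero singular values, $\|\sin\Theta\|_F\le\sqrt\kappa\,\|\sin\Theta\|_2$. The final inequality follows from $\|\cdot\|_2\le\|\cdot\|_F$.

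For (iv), I would expand algebraically. Write $\hat{\mathbf{P}}_\kappa=\mathbf{P}_\kappa+\mathbf{D}$ with $\mathbf{D}:=\hat{\mathbf{P}}_\kappa-\mathbf{P}_\kappa$. Then
$\hat{\mathbf{P}}_\kappa\mathbf{C}_E\hat{\mathbf{P}}_\kappa-\mathbf{P}_\kappa\boldsymbol{\Sigma}_E\mathbf{P}_\kappa=\mathbf{P}_\kappa\boldsymbol{\Delta}\mathbf{P}_\kappa+\mathbf{D}\mathbf{C}_E\hat{\mathbf{P}}_\kappa+\mathbf{P}_\kappa\mathbf{C}_E\mathbf{D}$.
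The triangle inequality, together with $\|\mathbf{A}\mathbf{P}\|_F\le\|\mathbf{A}\|_F$ for any orthogonal projector $\mathbf{P}$, yields the three terms in the statement.
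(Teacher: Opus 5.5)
Your arguments for (i) and (iv) are correct and match the paper's: Weyl's inequality, and inserting and subtracting $\mathbf{P}_\kappa\mathbf{C}_E\mathbf{P}_\kappa$ before dropping projector factors in the Frobenius norm. The gap you flagged in (ii) and (iii) is real, and no argument can close it, because the inequalities with the stated constants are false.

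\textbf{Why the step fails.} For $\|\boldsymbol{\Delta}\|_2<\delta_i$ you propose to ``invoke the Davis--Kahan theorem as stated with the population gap,'' but no such theorem holds with constant $1$. The $\sin\theta$ theorem, in its one-sided or symmetric form, needs separation between the spectrum of one matrix on one subspace and that of the other matrix on the complement (e.g.\ $\hat\lambda_i$ versus $\{\lambda_j\}_{j\ne i}$). Weyl only guarantees $\delta_i-\|\boldsymbol{\Delta}\|_2$ for that separation.

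\textbf{Counterexamples.} Take $\boldsymbol{\Sigma}_E=\mathrm{diag}(2,1)$ and $\mathbf{C}_E=\mathrm{diag}(1.4,1.6)$. Then $\delta_1=1$ and $\|\boldsymbol{\Delta}\|_2=0.6$, yet $\hat{\mathbf{v}}_1=\mathbf{e}_2\perp\mathbf{v}_1$. So $\sin\angle(\hat{\mathbf{v}}_1,\mathbf{v}_1)=1$ and $\|\hat{\mathbf{v}}_1-\mathbf{v}_1\|_2=\|\hat{\mathbf{P}}_1-\mathbf{P}_1\|_F=\sqrt{2}$, which violates every bound in (ii) and (iii) with $i=\kappa=1$. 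This is not only a large-perturbation effect. The matrix $\mathbf{C}_E=\bigl(\begin{smallmatrix}1.68&0.24\\0.24&1.32\end{smallmatrix}\bigr)$ has $\|\boldsymbol{\Delta}\|_2=0.4<\delta_1/2$ and no eigenvalue crossing, yet $\sin\angle(\hat{\mathbf{v}}_1,\mathbf{v}_1)=1/\sqrt{5}>0.4$. The paper's proof makes exactly the citation you hesitated over, so it has the same defect.

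\textbf{What is true, and how your argument proves it.} The correct statement carries an extra factor $2$. Your own projection argument proves it once you move the case split to $\|\boldsymbol{\Delta}\|_2\ge\delta_i/2$. The window $\delta_i/2\le\|\boldsymbol{\Delta}\|_2<\delta_i$ is where eigenvalues can already cross, and with the factor $2$ that case becomes trivial.

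For (ii) with $\|\boldsymbol{\Delta}\|_2<\delta_i/2$: Weyl gives $|\lambda_j-\hat\lambda_i|\ge\delta_i-\|\boldsymbol{\Delta}\|_2$ for $j\ne i$. Your identity then yields $(\delta_i-\|\boldsymbol{\Delta}\|_2)\|\mathbf{w}\|\le\|\boldsymbol{\Delta}\|_2$. Hence $\sin\angle(\hat{\mathbf{v}}_i,\mathbf{v}_i)\le 2\|\boldsymbol{\Delta}\|_2/\delta_i$ and $\|\hat{\mathbf{v}}_i-\mathbf{v}_i\|_2\le 2\sqrt{2}\,\|\boldsymbol{\Delta}\|_2/\delta_i$.

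For (iii): put $\mathbf{V}_\perp:=[\mathbf{v}_{\kappa+1},\ldots,\mathbf{v}_d]$, $\hat{\boldsymbol{\Lambda}}_\kappa:=\mathrm{diag}(\hat\lambda_1,\ldots,\hat\lambda_\kappa)$ and $\mathbf{X}:=\mathbf{V}_\perp^\top\hat{\mathbf{V}}_\kappa$. Applying $\mathbf{V}_\perp^\top$ to $\boldsymbol{\Sigma}_E\hat{\mathbf{V}}_\kappa-\hat{\mathbf{V}}_\kappa\hat{\boldsymbol{\Lambda}}_\kappa=-\boldsymbol{\Delta}\hat{\mathbf{V}}_\kappa$ gives a Sylvester equation that is diagonal in these bases. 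Divide entrywise by $\hat\lambda_k-\lambda_j\ge\delta_\kappa-\|\boldsymbol{\Delta}\|_2$ for $k\le\kappa<j$. This gives $\|\mathbf{X}\|_F\le\sqrt{\kappa}\,\|\boldsymbol{\Delta}\|_2/(\delta_\kappa-\|\boldsymbol{\Delta}\|_2)$. Hence $\|\hat{\mathbf{P}}_\kappa-\mathbf{P}_\kappa\|_F=\sqrt{2}\,\|\mathbf{X}\|_F\le 2\sqrt{2\kappa}\,\|\boldsymbol{\Delta}\|_2/\delta_\kappa$. In the other case, $\|\hat{\mathbf{P}}_\kappa-\mathbf{P}_\kappa\|_F\le\sqrt{2\kappa}$ suffices.

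\textbf{Sharpness and downstream use.} The factor $2$ is sharp. Along $\mathbf{C}_E=\mathrm{diag}(1.5-\eta,1.5+\eta)$ with $\eta\downarrow 0$, the ratio $\delta_1\sin\angle(\hat{\mathbf{v}}_1,\mathbf{v}_1)/\|\boldsymbol{\Delta}\|_2$ tends to $2$. So state (ii) and (iii) with the factor $2$ from the outset, rather than ``adjusting the constant'' afterward. The $8\kappa/\delta_\kappa^2$ in \eqref{eq:var_projector_term_bound_main} equals $(2\sqrt{2\kappa}/\delta_\kappa)^2$, so the paper's downstream bound already matches the corrected constant.
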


\begin{proof}
The statements follow from standard deterministic perturbation results for symmetric matrices. Weyl's inequality yields
\begin{equation}
|\lambda_i(\mathbf{C}_E)-\lambda_i(\boldsymbol{\Sigma}_E)|
\le
\|\mathbf{C}_E-\boldsymbol{\Sigma}_E\|_2,
\qquad i=1,\ldots,d,
\end{equation}
proving (i). Fix $i$ and assume the separation condition in (ii). The Davis--Kahan $\sin\Theta$ theorem gives
\begin{equation}
\sin\angle(\hat{\mathbf{v}}_i,\mathbf{v}_i)
=
\|(\mathbf{I}-\mathbf{v}_i\mathbf{v}_i^\top)\hat{\mathbf{v}}_i\|_2
\le
\frac{\|\mathbf{C}_E-\boldsymbol{\Sigma}_E\|_2}{\delta_i}.
\end{equation}
Choosing the sign of $\hat{\mathbf{v}}_i$ so that $\langle \hat{\mathbf{v}}_i,\mathbf{v}_i\rangle\ge 0$ and using
$\|\hat{\mathbf{v}}_i-\mathbf{v}_i\|_2=2\sin(\angle(\hat{\mathbf{v}}_i,\mathbf{v}_i)/2)\le \sqrt{2}\,\sin\angle(\hat{\mathbf{v}}_i,\mathbf{v}_i)$ yields the Euclidean bound in (ii). Under the cutoff-gap condition in (iii), Davis--Kahan in projector form yields
\begin{equation}
\|\hat{\mathbf{P}}_\kappa-\mathbf{P}_\kappa\|_F
\le
\frac{\sqrt{2\kappa}}{\delta_\kappa}\,
\|\mathbf{C}_E-\boldsymbol{\Sigma}_E\|_2,
\end{equation}
and the final inequality follows from $\|\cdot\|_2\le \|\cdot\|_F$. Using $\boldsymbol{\Sigma}_E^\kappa=\mathbf{P}_\kappa\boldsymbol{\Sigma}_E\mathbf{P}_\kappa$ and
$\mathbf{C}_E^\kappa=\hat{\mathbf{P}}_\kappa\mathbf{C}_E\hat{\mathbf{P}}_\kappa$, insert and subtract
$\mathbf{P}_\kappa\mathbf{C}_E\mathbf{P}_\kappa$ to obtain
\begin{equation}
\mathbf{C}_E^\kappa-\boldsymbol{\Sigma}_E^\kappa
=
\mathbf{P}_\kappa(\mathbf{C}_E-\boldsymbol{\Sigma}_E)\mathbf{P}_\kappa
+
(\hat{\mathbf{P}}_\kappa-\mathbf{P}_\kappa)\mathbf{C}_E\hat{\mathbf{P}}_\kappa
+
\mathbf{P}_\kappa\mathbf{C}_E(\hat{\mathbf{P}}_\kappa-\mathbf{P}_\kappa).
\end{equation}
Taking Frobenius norms and applying the triangle inequality gives (iv).
\end{proof}

\begin{lemma}[Fourth-moment bound for the sample covariance error (Gaussian case)]
\label{lem:cov_frob_fourth_moment}
Let $\boldsymbol{\Sigma}_E$ and $\mathbf{C}_E$ be as in Definition~\ref{def:whitened_residual_covariances}. Assume that the centered whitened residuals
\begin{equation}
\tilde{\mathbf{e}}^{(j)}:=\mathbf{e}^{(j)}-\boldsymbol{\mu}_E
\end{equation}
are i.i.d.\ Gaussian,
\begin{equation}
\tilde{\mathbf{e}}^{(j)} \stackrel{\mathrm{i.i.d.}}{\sim} \mathcal{N}(\mathbf{0},\boldsymbol{\Sigma}_E)
\qquad\text{(equivalently, }\mathbf{e}^{(j)} \stackrel{\mathrm{i.i.d.}}{\sim} \mathcal{N}(\boldsymbol{\mu}_E,\boldsymbol{\Sigma}_E)\text{)}.
\end{equation}
Then there exists a constant $C_{\mathrm{W}}>0$, depending only on the observation-space dimension $d=mL$, such that for all $N\ge 2$,
\begin{equation}
\label{eq:cov_fourth_moment_bound}
\mathbb{E}\!\left[\|\mathbf{C}_E-\boldsymbol{\Sigma}_E\|_F^4\right]
\le
\frac{C_{\mathrm{W}}}{(N-1)^2}\,\bigl(\mathrm{tr}(\boldsymbol{\Sigma}_E^2)\bigr)^2.
\end{equation}
In particular,
\begin{equation}
\Bigl(\mathbb{E}\|\mathbf{C}_E-\boldsymbol{\Sigma}_E\|_F^4\Bigr)^{1/2}
=
\mathcal{O}\!\left(\frac{1}{N}\right)
\quad\text{as }N\to\infty.
\end{equation}
\end{lemma}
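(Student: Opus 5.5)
We prove the bound by reducing to standard Gaussian matrices and tracking only constants that depend on $d$. Write $\boldsymbol{\Sigma}_E=\boldsymbol{\Sigma}_E^{1/2}\boldsymbol{\Sigma}_E^{1/2}$ and $\tilde{\mathbf{e}}^{(j)}=\boldsymbol{\Sigma}_E^{1/2}\mathbf{g}^{(j)}$ with $\mathbf{g}^{(j)}\stackrel{\mathrm{i.i.d.}}{\sim}\mathcal{N}(\mathbf{0},\mathbf{I}_d)$. This representation also covers the degenerate case where $\boldsymbol{\Sigma}_E$ is singular. Centering is invariant under this linear map, so $\mathbf{C}_E=\boldsymbol{\Sigma}_E^{1/2}\mathbf{S}\,\boldsymbol{\Sigma}_E^{1/2}$, where $\mathbf{S}$ is the Bessel-corrected sample covariance of the $\mathbf{g}^{(j)}$. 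By Cochran's theorem (orthogonal rotation of the $N$ samples removing the mean direction), $(N-1)\mathbf{S}=\sum_{j=1}^{N-1}\mathbf{h}^{(j)}(\mathbf{h}^{(j)})^\top$ with $\mathbf{h}^{(j)}$ i.i.d.\ standard Gaussian. Setting $\mathbf{W}:=\mathbf{S}-\mathbf{I}_d$, we get
\[
\mathbf{C}_E-\boldsymbol{\Sigma}_E=\boldsymbol{\Sigma}_E^{1/2}\mathbf{W}\boldsymbol{\Sigma}_E^{1/2},
\]
and $\mathbf{W}=\frac{1}{N-1}\sum_{j=1}^{N-1}\mathbf{Y}_j$ is an average of i.i.d.\ zero-mean matrices $\mathbf{Y}_j:=\mathbf{h}^{(j)}(\mathbf{h}^{(j)})^\top-\mathbf{I}_d$.

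Next we separate the deterministic factor $\boldsymbol{\Sigma}_E$ from the random factor $\mathbf{W}$. Using $\|\mathbf{A}\mathbf{B}\mathbf{C}\|_F\le\|\mathbf{A}\|_2\|\mathbf{B}\|_F\|\mathbf{C}\|_2$,
\[
\|\mathbf{C}_E-\boldsymbol{\Sigma}_E\|_F^4\le\|\boldsymbol{\Sigma}_E\|_2^4\,\|\mathbf{W}\|_F^4\le\bigl(\mathrm{tr}(\boldsymbol{\Sigma}_E^2)\bigr)^2\|\mathbf{W}\|_F^4,
\]
since $\|\boldsymbol{\Sigma}_E\|_2^2\le\mathrm{tr}(\boldsymbol{\Sigma}_E^2)$. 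This crude step is what lets the constant depend only on $d$ and not on the conditioning of $\boldsymbol{\Sigma}_E$. It therefore suffices to prove $\mathbb{E}\|\mathbf{W}\|_F^4\le C_{\mathrm{W}}/(N-1)^2$, where $C_{\mathrm{W}}$ depends only on $d$.

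For this estimate we work entrywise, following the scalar argument in Lemma~\ref{lem:sample_mean_fourth_moment}. By $(\sum_{a,b}x_{ab}^2)^2\le d^2\sum_{a,b}x_{ab}^4$,
\[
\|\mathbf{W}\|_F^4\le d^2\sum_{a,b}W_{ab}^4.
\]
Each entry $W_{ab}=\frac{1}{M}\sum_{j=1}^{M}(Y_j)_{ab}$, with $M=N-1$, is an average of i.i.d.\ zero-mean scalars. The fourth-moment identity for such sums gives
\[
\mathbb{E}W_{ab}^4=\frac{M\,\mathbb{E}Y_{ab}^4+3M(M-1)(\mathbb{E}Y_{ab}^2)^2}{M^4}\le\frac{\mathbb{E}Y_{ab}^4+3(\mathbb{E}Y_{ab}^2)^2}{M^2}.
\]
The moments of $Y_{ab}$, namely $h_ah_b$ for $a\ne b$ and $h_a^2-1$ for $a=b$, are absolute Gaussian constants: $\mathbb{E}(h_a^2-1)^4=60$ and $\mathbb{E}(h_ah_b)^4=9$. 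Summing over the $d^2$ entries yields $\mathbb{E}\|\mathbf{W}\|_F^4\le C\,d^4/M^2$, so the claim holds with $C_{\mathrm{W}}\propto d^4$. The $\mathcal{O}(1/N)$ statement for $(\mathbb{E}\|\mathbf{C}_E-\boldsymbol{\Sigma}_E\|_F^4)^{1/2}$ then follows by taking square roots.

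The only delicate step is the Cochran reduction from $N$ centered samples to $N-1$ uncentered i.i.d.\ Gaussians. It is what makes $\mathbf{W}$ an exact average of i.i.d.\ zero-mean matrices and avoids cross terms with the sample mean. If we wanted to skip it, we could instead write $\mathbf{C}_E-\boldsymbol{\Sigma}_E$ as in the proof of Lemma~\ref{lem:frobenius_concentration}, use $(a+b)^4\le 8a^4+8b^4$, and bound the sample-mean term with Lemma~\ref{lem:sample_mean_fourth_moment} in its eighth-moment analogue. That route also gives an $\mathcal{O}(N^{-2})$ rate but is messier.
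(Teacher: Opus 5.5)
Your proof is correct. Its first step is the paper's: the orthogonal rotation that diagonalizes the centering matrix (Cochran) identifies $(N-1)\mathbf{C}_E$ with a central Wishart matrix with $N-1$ degrees of freedom.

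The two arguments differ after that point. The paper appeals to ``standard fourth-moment bounds for the Wishart distribution'' without stating or deriving them, whereas you prove the needed bound. You whiten ($\tilde{\mathbf{e}}^{(j)}=\boldsymbol{\Sigma}_E^{1/2}\mathbf{g}^{(j)}$) and factor out the deterministic part through $\|\boldsymbol{\Sigma}_E^{1/2}\mathbf{W}\boldsymbol{\Sigma}_E^{1/2}\|_F\le\|\boldsymbol{\Sigma}_E\|_2\|\mathbf{W}\|_F$ and $\|\boldsymbol{\Sigma}_E\|_2^2\le\mathrm{tr}(\boldsymbol{\Sigma}_E^2)$. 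You then control $\mathbb{E}\|\mathbf{W}\|_F^4$ for the standard Wishart deviation entrywise, using the scalar fourth-moment identity from Lemma~\ref{lem:sample_mean_fourth_moment}. Your Gaussian constants check out: fourth moments $60$ and $9$, and second moments $2$ and $1$ for diagonal and off-diagonal entries. They give the explicit admissible value $C_{\mathrm{W}}=12d^3(d+5)$.

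Your route buys a self-contained, fully explicit proof that does not depend on an unspecified external moment formula. The cost is sharpness in $d$: the step $\|\boldsymbol{\Sigma}_E\|_2^4\le(\mathrm{tr}(\boldsymbol{\Sigma}_E^2))^2$ loses a factor of up to $d^2$ (for isotropic $\boldsymbol{\Sigma}_E$), so your constant grows like $d^4$. Exact Wishart moment formulas, the kind of result the paper's citation rests on (compare the second-moment identity in Remark~\ref{rem:gaussian_frobenius}), give a constant of order $d^2$. The lemma only asks for some constant depending on $d$, so this loss does not affect its conclusion.
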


\begin{proof}
Let $\tilde{\mathbf{e}}^{(j)}:=\mathbf{e}^{(j)}-\boldsymbol{\mu}_E$, so that
$\tilde{\mathbf{e}}^{(j)}\stackrel{\mathrm{i.i.d.}}{\sim}\mathcal{N}(\mathbf{0},\boldsymbol{\Sigma}_E)$.
Write the sample covariance as
\begin{equation}
\mathbf{C}_E
=
\frac{1}{N-1}\sum_{j=1}^N(\tilde{\mathbf{e}}^{(j)}-\bar{\tilde{\mathbf{e}}})(\tilde{\mathbf{e}}^{(j)}-\bar{\tilde{\mathbf{e}}})^\top,
\qquad
\bar{\tilde{\mathbf{e}}}:=\frac{1}{N}\sum_{j=1}^N \tilde{\mathbf{e}}^{(j)}.
\end{equation}
Let $\mathbf{A}\in\mathbb{R}^{d\times N}$ be the data matrix with columns $\tilde{\mathbf{e}}^{(j)}$, and let
$\mathbf{J}_N:=\mathbf{I}_N-\frac{1}{N}\mathbf{1}\mathbf{1}^\top$ be the centering matrix. Then
\begin{equation}
(N-1)\mathbf{C}_E=\mathbf{A}\mathbf{J}_N\mathbf{A}^\top.
\end{equation}

\smallskip
\noindent Since $\mathbf{J}_N$ is an orthogonal projector of rank $N-1$, there exists an orthogonal matrix
$\mathbf{U}\in\mathbb{R}^{N\times N}$ such that
\begin{equation}
\mathbf{J}_N
=
\mathbf{U}
\begin{bmatrix}
\mathbf{I}_{N-1} & 0\\
0 & 0
\end{bmatrix}
\mathbf{U}^\top.
\end{equation}
Let $\mathbf{U}_{1:(N-1)}\in\mathbb{R}^{N\times (N-1)}$ denote the first $N-1$ columns of $\mathbf{U}$. Then
\begin{equation}
(N-1)\mathbf{C}_E
=
\mathbf{A}\mathbf{U}_{1:(N-1)}
\bigl(\mathbf{A}\mathbf{U}_{1:(N-1)}\bigr)^\top.
\end{equation}

The key point is that right multiplication by an orthogonal matrix preserves the distribution of a Gaussian data matrix. Indeed, since the columns of $\mathbf{A}$ are i.i.d.\ $\mathcal{N}(\mathbf{0},\boldsymbol{\Sigma}_E)$, for any orthogonal $\mathbf{U}$,
\begin{equation}
\mathrm{vec}(\mathbf{A}\mathbf{U})
=
(\mathbf{U}^\top\otimes \mathbf{I}_d)\,\mathrm{vec}(\mathbf{A})
\sim
\mathcal{N}\!\left(\mathbf{0},\,\mathbf{I}_N\otimes \boldsymbol{\Sigma}_E\right),
\end{equation}
so $\mathbf{A}\mathbf{U}$ and $\mathbf{A}$ are identically distributed. In particular, the columns of $\mathbf{A}\mathbf{U}$ are again i.i.d.\ $\mathcal{N}(\mathbf{0},\boldsymbol{\Sigma}_E)$, and thus the first $N-1$ columns
\begin{equation}
\mathbf{B}:=\mathbf{A}\mathbf{U}_{1:(N-1)}\in\mathbb{R}^{d\times (N-1)}
\end{equation}
are i.i.d.\ $\mathcal{N}(\mathbf{0},\boldsymbol{\Sigma}_E)$. Therefore
\begin{equation}
(N-1)\mathbf{C}_E=\mathbf{B}\mathbf{B}^\top
\end{equation}
is a central Wishart random matrix with $N-1$ degrees of freedom and scale $\boldsymbol{\Sigma}_E$.

\smallskip
\noindent Standard fourth-moment bounds for the Wishart distribution imply that there exists a constant $C_{\mathrm{W}}>0$, depending only on $d$, such that
\begin{equation}
\mathbb{E}\!\left[\|\mathbf{C}_E-\boldsymbol{\Sigma}_E\|_F^4\right]
\le
\frac{C_{\mathrm{W}}}{(N-1)^2}\,\bigl(\mathrm{tr}(\boldsymbol{\Sigma}_E^2)\bigr)^2,
\end{equation}
which is \eqref{eq:cov_fourth_moment_bound}. Taking square roots yields the stated $\mathcal{O}(N^{-1})$ rate.
\end{proof}

Spectral separation is intrinsic to eigenspace stability: when eigenvalues are repeated or nearly repeated, arbitrarily small perturbations may induce large rotations within the associated invariant subspace even though eigenvalues remain stable. For QPCA-EnDCF, stability of the rank-$\kappa$ projector is therefore strongest when the cutoff gap $\lambda_\kappa-\lambda_{\kappa+1}$ is well separated.

\FloatBarrier

\section{Proofs for Section~\ref{sec:bv_decomposition}}\label{app:section5_proofs}

\subsection{Proof of Theorem~\ref{thm:bias_variance}}\label{app:proof_bias_variance}

\begin{proof}
The exact decomposition follows directly from Lemma~\ref{lem:mean_error_decomp}. It remains to express the analysis mean in a form amenable to truncation and projector perturbation bounds and then to estimate the resulting bias and variance contributions. By the QPCA-EnDCF construction (Definition~\ref{def:endcf_gain_ensemble} and Definition~\ref{def:projectors_truncation}), the analysis update acts on the whitened residuals through the empirical rank-$\kappa$ projector:
\begin{equation}
\mathbf{x}^{(j),a,s}
=
\mathbf{x}^{(j),f}
-
\mathbf{K}^{\mathrm{DC}}(\mathbf{R}^{(L)})^{1/2}\hat{\mathbf{P}}_\kappa^{\,s}\,\mathbf{e}^{(j),s}.
\end{equation}
Averaging over $j$ gives the corresponding relation for the analysis mean,
\begin{equation}
\label{eq:mean_update_projector_main}
\bar{\mathbf{x}}^{a,s}
=
\bar{\mathbf{x}}_{k_w}^{f}
-
\mathbf{K}^{\mathrm{DC}}(\mathbf{R}^{(L)})^{1/2}\hat{\mathbf{P}}_\kappa^{\,s}\,\bar{\mathbf{e}}^{\,s}.
\end{equation}

\smallskip
	\noindent Insert and subtract the population-truncated correction $(\mathbf{R}^{(L)})^{1/2}\mathbf{P}_\kappa^{\,s}\boldsymbol{\mu}_E^{\,s}$ in \eqref{eq:mean_update_projector_main} and decompose $\bar{\mathbf{e}}^{\,s}=\boldsymbol{\mu}_E^{\,s}+(\bar{\mathbf{e}}^{\,s}-\boldsymbol{\mu}_E^{\,s})$ to obtain
		\begin{align*}
			\mathbb{E}[\bar{\mathbf{x}}^{a,s}]
			&=
			\boldsymbol{\mu}_{k_w}^f
			-
			\mathbb{E}\!\left[\mathbf{K}^{\mathrm{DC}}(\mathbf{R}^{(L)})^{1/2}\mathbf{P}_\kappa^{\,s}\,\boldsymbol{\mu}_E^{\,s}\right]\\
	&\quad-
	\mathbb{E}\!\left[\mathbf{K}^{\mathrm{DC}}(\mathbf{R}^{(L)})^{1/2}(\hat{\mathbf{P}}_\kappa^{\,s}-\mathbf{P}_\kappa^{\,s})\,\boldsymbol{\mu}_E^{\,s}\right]\\
	&\quad-
	\mathbb{E}\!\left[\mathbf{K}^{\mathrm{DC}}(\mathbf{R}^{(L)})^{1/2}\hat{\mathbf{P}}_\kappa^{\,s}\,(\bar{\mathbf{e}}^{\,s}-\boldsymbol{\mu}_E^{\,s})\right].
	\end{align*}
	Add and subtract the untruncated population correction $\mathbb{E}[\mathbf{K}^{\mathrm{DC}}](\mathbf{R}^{(L)})^{1/2}\boldsymbol{\mu}_E^{\,s}$ to isolate the residual (``base'') analysis bias:
\begin{equation}
		\mathbf{b}_{\mathrm{base}}^{\,s}
		:=
			\boldsymbol{\mu}_{k_w}^f-\mathbf{x}^{\mathrm{true}}-\mathbb{E}[\mathbf{K}^{\mathrm{DC}}](\mathbf{R}^{(L)})^{1/2}\boldsymbol{\mu}_E^{\,s}.
\end{equation}
	Using $\mathbb{E}[\mathbf{K}^{\mathrm{DC}}(\mathbf{R}^{(L)})^{1/2}\boldsymbol{\mu}_E^{\,s}]=\mathbb{E}[\mathbf{K}^{\mathrm{DC}}](\mathbf{R}^{(L)})^{1/2}\boldsymbol{\mu}_E^{\,s}$, we obtain
	\begin{align*}
	\mathbb{E}[\bar{\mathbf{x}}^{a,s}]-\mathbf{x}^{\mathrm{true}}
	&=
	\mathbf{b}_{\mathrm{base}}^{\,s}
	+
	\mathbb{E}\!\left[\mathbf{K}^{\mathrm{DC}}(\mathbf{R}^{(L)})^{1/2}(\mathbf{I}-\mathbf{P}_\kappa^{\,s})\boldsymbol{\mu}_E^{\,s}\right]\\
	&\qquad
	-
	\mathbb{E}\!\left[\mathbf{K}^{\mathrm{DC}}(\mathbf{R}^{(L)})^{1/2}(\hat{\mathbf{P}}_\kappa^{\,s}-\mathbf{P}_\kappa^{\,s})\,\boldsymbol{\mu}_E^{\,s}\right]
	-
	\mathbb{E}\!\left[\mathbf{K}^{\mathrm{DC}}(\mathbf{R}^{(L)})^{1/2}\hat{\mathbf{P}}_\kappa^{\,s}\,(\bar{\mathbf{e}}^{\,s}-\boldsymbol{\mu}_E^{\,s})\right].
	\end{align*}
	Consequently,
\begin{equation}
	\mathrm{Bias}^2(\kappa,s)
	\le
	2\|\mathbf{b}_{\mathrm{base}}^{\,s}\|^2
	+
	2\left\|\mathbb{E}[\bar{\mathbf{x}}^{a,s}]-\mathbf{x}^{\mathrm{true}}-\mathbf{b}_{\mathrm{base}}^{\,s}\right\|^2.
\end{equation}
	The truncation remainder relative to the full population correction is governed by $(\mathbf{I}-\mathbf{P}_\kappa^{\,s})\boldsymbol{\mu}_E^{\,s}$. By Jensen's inequality and submultiplicativity of the spectral norm,
\begin{equation}
	\left\|\mathbb{E}\!\left[\mathbf{K}^{\mathrm{DC}}(\mathbf{R}^{(L)})^{1/2}(\mathbf{I}-\mathbf{P}_\kappa^{\,s})\boldsymbol{\mu}_E^{\,s}\right]\right\|^2
	\le
	\mathbb{E}\|\mathbf{K}^{\mathrm{DC}}\|_2^2\,\|\mathbf{R}^{(L)}\|_2\,\|(\mathbf{I}-\mathbf{P}_\kappa^{\,s})\boldsymbol{\mu}_E^{\,s}\|^2,
\end{equation}
	which yields the truncation contribution in \eqref{eq:bias_bound_fixed_main}.
\end{proof}

\subsection{Proof of Theorem~\ref{thm:variance_decomposition}}\label{app:proof_variance_decomposition}

\begin{proof}
The sample-mean fluctuation term is controlled without any independence assumption. Using $\|\hat{\mathbf{P}}_\kappa^{\,s}\|_2\le 1$, Jensen's inequality, submultiplicativity, and Cauchy--Schwarz,
\begin{multline*}
\left\|\mathbb{E}\!\left[\mathbf{K}^{\mathrm{DC}}(\mathbf{R}^{(L)})^{1/2}\hat{\mathbf{P}}_\kappa^{\,s}\,(\bar{\mathbf{e}}^{\,s}-\boldsymbol{\mu}_E^{\,s})\right]\right\|^2
\le
\|\mathbf{R}^{(L)}\|_2
\left(\mathbb{E}\!\left[\|\mathbf{K}^{\mathrm{DC}}\|_2\,\|\bar{\mathbf{e}}^{\,s}-\boldsymbol{\mu}_E^{\,s}\|\right]\right)^2\\
\le
\|\mathbf{R}^{(L)}\|_2\,\mathbb{E}\|\mathbf{K}^{\mathrm{DC}}\|_2^2\,
\mathbb{E}\|\bar{\mathbf{e}}^{\,s}-\boldsymbol{\mu}_E^{\,s}\|^2.
\end{multline*}
Since $\mathbb{E}\|\bar{\mathbf{e}}^{\,s}-\boldsymbol{\mu}_E^{\,s}\|^2=N^{-1}\mathrm{tr}(\boldsymbol{\Sigma}_E^{\,s})$, this yields the $\mathcal{O}(N^{-1})$ contribution in \eqref{eq:bias_bound_fixed_main}. The projector-estimation term is bounded similarly. Indeed, by Jensen's inequality, submultiplicativity, $\|(\mathbf{R}^{(L)})^{1/2}\|_2^2=\|\mathbf{R}^{(L)}\|_2$, $\|\hat{\mathbf{P}}_\kappa^{\,s}-\mathbf{P}_\kappa^{\,s}\|_2\le \|\hat{\mathbf{P}}_\kappa^{\,s}-\mathbf{P}_\kappa^{\,s}\|_F$, and Cauchy--Schwarz (no independence assumption),
	\begin{align*}
	\left\|\mathbb{E}\!\left[\mathbf{K}^{\mathrm{DC}}(\mathbf{R}^{(L)})^{1/2}(\hat{\mathbf{P}}_\kappa^{\,s}-\mathbf{P}_\kappa^{\,s})\boldsymbol{\mu}_E^{\,s}\right]\right\|^2
	&\le
	\|\mathbf{R}^{(L)}\|_2\,\|\boldsymbol{\mu}_E^{\,s}\|^2
	\left(\mathbb{E}\!\left[\|\mathbf{K}^{\mathrm{DC}}\|_2\,\|\hat{\mathbf{P}}_\kappa^{\,s}-\mathbf{P}_\kappa^{\,s}\|_F\right]\right)^2\\
	&\le
	\mathbb{E}\|\mathbf{K}^{\mathrm{DC}}\|_2^2\,\|\mathbf{R}^{(L)}\|_2\,
	\mathbb{E}\|\hat{\mathbf{P}}_\kappa^{\,s}-\mathbf{P}_\kappa^{\,s}\|_F^2\,\|\boldsymbol{\mu}_E^{\,s}\|^2.
	\end{align*}
	Invoking Lemma~\ref{lem:covariance_concentration}(iii) gives
	\(
	\mathbb{E}\|\hat{\mathbf{P}}_\kappa^{\,s}-\mathbf{P}_\kappa^{\,s}\|_F^2
\lesssim
\frac{\kappa}{\delta_\kappa^2}\mathbb{E}\|\mathbf{C}_E^{\,s}-\boldsymbol{\Sigma}_E^{\,s}\|_F^2,
\)
and absorbing $\|\boldsymbol{\mu}_E^{\,s}\|^2$ into the constant yields the projector-estimation term in \eqref{eq:bias_bound_fixed_main}. Finally, the map-approximation term $C_{\mathrm{m}}\|Q_s-Q\|_{L^2(P_{\mathcal{X}})}^2$ follows from stability of the mean update with respect to $Q_s\to Q$ in $L^2(P_{\mathcal{X}})$ (as in the continuity framework in~\cite{butlerwildeyzhang2022}); we record it separately because it is orthogonal to the sampling and truncation mechanisms.

\smallskip
\noindent Subtract $\mathbb{E}[\bar{\mathbf{x}}^{a,s}]$ from \eqref{eq:mean_update_projector_main} and apply Young's inequality (i.e., $2ab\le a^2+b^2$) to separate (i) the fluctuation of the forecast mean and (ii) the fluctuation of the projected residual term. Using the standard identities
\begin{equation}
\mathbb{E}\|\bar{\mathbf{x}}_{k_w}^f-\boldsymbol{\mu}_{k_w}^f\|^2=\frac{1}{N}\mathbb{E}\|\mathbf{x}_{k_w}^{(1),f}-\boldsymbol{\mu}_{k_w}^f\|^2,
\qquad
\mathbb{E}\|\bar{\mathbf{e}}^{\,s}-\boldsymbol{\mu}_E^{\,s}\|^2=\frac{1}{N}\mathrm{tr}\!\bigl(\boldsymbol{\Sigma}_E^{\,s}\bigr),
\end{equation}
we obtain \eqref{eq:var_bound_fixed_main_raw}. The remaining projector term reflects the fact that $\hat{\mathbf{P}}_\kappa^{\,s}-\mathbf{P}_\kappa^{\,s}$ and $\bar{\mathbf{e}}^{\,s}-\boldsymbol{\mu}_E^{\,s}$ are generally dependent, since both are computed from the same ensemble. The estimate \eqref{eq:var_projector_term_bound_main} follows by applying Cauchy--Schwarz to fourth moments and then invoking Lemma~\ref{lem:covariance_concentration}(iii) applied to $(\boldsymbol{\Sigma}_E^{\,s},\mathbf{C}_E^{\,s})$.
\end{proof}

\subsection{Proof of Corollary~\ref{cor:projector_term_scaling_gaussian}}\label{app:proof_projector_scaling_cor}

\begin{proof}
Combine \eqref{eq:var_projector_term_bound_main} with Lemma~\ref{lem:cov_frob_fourth_moment} applied to $(\mathbf{C}_E^{\,s},\boldsymbol{\Sigma}_E^{\,s})$ under the stated Gaussian hypothesis, and with Lemma~\ref{lem:sample_mean_fourth_moment} applied to $\bar{\mathbf{e}}^{\,s}$. The resulting bound is of order $(N-1)^{-1}\times N^{-1}$, and the constants can be absorbed into a single $C>0$ depending only on $d$.
\end{proof}

\subsection{Proof of Corollary~\ref{cor:stochastic_comparison_new}}\label{app:proof_stochastic_comparison_cor}

\begin{proof}
The identity for $\bar{\mathbf{x}}^{a}_{\mathrm{stoch}}$ follows by averaging the memberwise update and collecting the perturbation terms. The covariance relation then follows from independence and
\begin{equation}
\mathrm{Cov}\!\left(\frac{1}{N}\sum_{j=1}^N \mathbf{K}^{(w)}\boldsymbol{\epsilon}_{w}^{(j)}\right)
=
\frac{1}{N}\mathbf{K}^{(w)}\mathbf{R}^{(L)}(\mathbf{K}^{(w)})^\top.
\end{equation}
The final comparison for QPCA-EnDCF is a direct restatement of Theorem~\ref{thm:bias_variance} together with \eqref{eq:var_projector_term_bound_main}.
\end{proof}

 \bibliographystyle{elsarticle-num} 
 \bibliography{references}

@article{burgers1998,
  author  = {Burgers, Gerrit and van Leeuwen, Peter Jan and Evensen, Geir},
  title   = {Analysis scheme in the ensemble {K}alman filter},
  journal = {Monthly Weather Review},
  volume  = {126},
  number  = {6},
  pages   = {1719--1724},
  year    = {1998},
  doi     = {10.1175/1520-0493(1998)126<1719:ASITEK>2.0.CO;2}
}

@article{houtekamer1998data,
  author  = {Houtekamer, Peter L. and Mitchell, Herschel L.},
  title   = {Data assimilation using an ensemble {K}alman filter technique},
  journal = {Monthly Weather Review},
  volume  = {126},
  number  = {3},
  pages   = {796--811},
  year    = {1998},
  doi     = {10.1175/1520-0493(1998)126<0796:DAUAEK>2.0.CO;2}
}

@article{hamill2001localization,
  author  = {Hamill, Thomas M. and Whitaker, Jeffrey S. and Snyder, Chris},
  title   = {Distance-dependent filtering of background error covariance estimates in an ensemble {K}alman filter},
  journal = {Monthly Weather Review},
  volume  = {129},
  number  = {11},
  pages   = {2776--2790},
  year    = {2001},
  doi     = {10.1175/1520-0493(2001)129<2776:DDFOBE>2.0.CO;2}
}

@article{bocquet2011inflation,
  author  = {Bocquet, Marc},
  title   = {Ensemble {K}alman filtering without the intrinsic need for inflation},
  journal = {Nonlinear Processes in Geophysics},
  volume  = {18},
  number  = {5},
  pages   = {735--750},
  year    = {2011},
  doi     = {10.5194/npg-18-735-2011}
}

@article{houtekamer2016review,
  author  = {Houtekamer, Peter L. and Zhang, Fuqing},
  title   = {Review of the ensemble {K}alman filter for atmospheric data assimilation},
  journal = {Monthly Weather Review},
  volume  = {144},
  number  = {12},
  pages   = {4489--4532},
  year    = {2016},
  doi     = {10.1175/MWR-D-15-0440.1}
}

@article{roth2017signal,
  author  = {Roth, Michael and Hendeby, Gustaf and Fritsche, Carsten and Gustafsson, Fredrik},
  title   = {The ensemble {K}alman filter: a signal processing perspective},
  journal = {EURASIP Journal on Advances in Signal Processing},
  volume  = {2017},
  number  = {1},
  pages   = {56},
  year    = {2017},
  doi     = {10.1186/s13634-017-0492-x}
}

@article{carrassi2018geosciences,
  author  = {Carrassi, Alberto and Bocquet, Marc and Bertino, Laurent and Evensen, Geir},
  title   = {Data assimilation in the geosciences: an overview of methods, issues, and perspectives},
  journal = {Wiley Interdisciplinary Reviews: Climate Change},
  volume  = {9},
  number  = {5},
  pages   = {e535},
  year    = {2018},
  doi     = {10.1002/wcc.535}
}

@book{sanzalonso2023inverse,
  author    = {Sanz-Alonso, Daniel and Stuart, Andrew and Taeb, Armeen},
  title     = {Inverse Problems and Data Assimilation},
  publisher = {Cambridge University Press},
  series    = {London Mathematical Society Student Texts},
  number    = {107},
  year      = {2023},
  doi       = {10.1017/9781009414319}
}

@article{kasanicky2015spectral,
  author  = {Kasanick{\'y}, Igor and Mandel, Jan and Vejmelka, Martin},
  title   = {Spectral diagonal ensemble {K}alman filters},
  journal = {Nonlinear Processes in Geophysics},
  volume  = {22},
  number  = {4},
  pages   = {485--497},
  year    = {2015},
  doi     = {10.5194/npg-22-485-2015}
}

@article{tsyrulnikov2024regularization,
  author  = {Tsyrulnikov, Michael and Sotskiy, Arseniy},
  title   = {Regularization of the ensemble {K}alman filter using a non-parametric, non-stationary spatial model},
  journal = {Spatial Statistics},
  volume  = {64},
  pages   = {100870},
  year    = {2024},
  doi     = {10.1016/j.spasta.2024.100870}
}

@article{hunt2004four,
  author  = {Hunt, B. R. and Kalnay, E. and Kostelich, E. J. and Ott, E. and
             Patil, D. J. and Sauer, T. and Szunyogh, I. and Yorke, J. A. and Zimin, A. V.},
  title   = {Four-dimensional ensemble {K}alman filtering},
  journal = {Tellus A: Dynamic Meteorology and Oceanography},
  volume  = {56},
  number  = {4},
  pages   = {273--277},
  year    = {2004},
  doi     = {10.3402/tellusa.v56i4.14424}
}

@article{butlerwildeyzhang2022,
  author  = {Butler, Troy and Wildey, Timothy and Zhang, Wenjuan},
  title   = {{${L}^p$} convergence of approximate maps and probability densities for forward and inverse problems in uncertainty quantification},
  journal = {International Journal for Uncertainty Quantification},
  volume  = {12},
  number  = {4},
  pages   = {65--92},
  year    = {2022},
  doi     = {10.1615/Int.J.UncertaintyQuantification.2022038086}
}

@article{butler2014sip,
  author  = {Butler, Troy and Estep, Don and Tavener, Simon and Dawson, Clint and Westerink, Joannes J},
  title   = {A measure-theoretic computational method for inverse sensitivity problems {III}: multiple quantities of interest},
  journal = {SIAM/ASA Journal on Uncertainty Quantification},
  volume  = {2},
  number  = {1},
  pages   = {174--202},
  year    = {2014},
  doi     = {10.1137/130930406}
}

@article{butler2018consistent,
  author  = {Butler, Troy and Jakeman, John and Wildey, Timothy},
  title   = {Combining push-forward measures and {B}ayes' rule to construct consistent solutions to stochastic inverse problems},
  journal = {SIAM Journal on Scientific Computing},
  volume  = {40},
  number  = {2},
  pages   = {A984--A1011},
  year    = {2018},
  doi     = {10.1137/16M1087229}
}

@article{pilosov2023mud,
  author  = {Pilosov, Michael and {del-Castillo-Negrete}, Carlos and Yen, Tian Yu and Butler, Troy and Dawson, Clint},
  title   = {Parameter estimation with maximal updated densities},
  journal = {Computer Methods in Applied Mechanics and Engineering},
  volume  = {407},
  pages   = {115906},
  year    = {2023},
  doi     = {10.1016/j.cma.2023.115906}
}

@article{delcastillo2025smud,
  author  = {{del-Castillo-Negrete}, Carlos and Spence, Rylan and Butler, Troy and Dawson, Clint},
  title   = {Sequential maximal updated density parameter estimation for dynamical systems with parameter drift},
  journal = {International Journal for Numerical Methods in Engineering},
  volume  = {126},
  number  = {3},
  pages   = {e7618},
  year    = {2025},
  doi     = {10.1002/nme.7618}
}

@article{spence2026variational,
  author  = {Spence, Rylan and Butler, Troy and Dawson, Clint},
  title   = {Variational data-consistent assimilation},
  journal = {Computer Methods in Applied Mechanics and Engineering},
  volume  = {453},
  pages   = {118804},
  year    = {2026},
  doi     = {10.1016/j.cma.2026.118804}
}

@article{kalman1960,
  author  = {Kalman, Rudolf E.},
  title   = {A new approach to linear filtering and prediction problems},
  journal = {Journal of Basic Engineering},
  volume  = {82},
  number  = {1},
  pages   = {35--45},
  year    = {1960},
  doi     = {10.1115/1.3662552}
}

@article{evensen1994,
  author  = {Evensen, Geir},
  title   = {Sequential data assimilation with a nonlinear quasi-geostrophic model using Monte Carlo methods to forecast error statistics},
  journal = {Journal of Geophysical Research: Oceans},
  volume  = {99},
  number  = {C5},
  pages   = {10143--10162},
  year    = {1994},
  doi     = {10.1029/94JC00572}
}

@article{evensen2003,
  author  = {Evensen, Geir},
  title   = {The ensemble {K}alman filter: theoretical formulation and practical implementation},
  journal = {Ocean Dynamics},
  volume  = {53},
  number  = {4},
  pages   = {343--367},
  year    = {2003},
  doi     = {10.1007/s10236-003-0036-9}
}

@book{evensen2009,
  author    = {Evensen, Geir},
  title     = {Data Assimilation: The Ensemble Kalman Filter},
  publisher = {Springer},
  edition   = {2},
  year      = {2009},
  doi       = {10.1007/978-3-642-03711-5}
}

@inproceedings{lorenz1996predictability,
  author    = {Lorenz, Edward N.},
  title     = {Predictability: a problem partly solved},
  booktitle = {Predictability, Volume 1: Proceedings of the Seminar on Predictability},
  publisher = {ECMWF},
  address   = {Shinfield Park, Reading, UK},
  volume    = {1},
  year      = {1996},
  note      = {Seminar held 4--8 September 1995},
  pages     = {1--18}
}

@article{schillings2017,
  author  = {Schillings, Claudia and Stuart, Andrew M.},
  title   = {Analysis of the ensemble {K}alman filter for inverse problems},
  journal = {SIAM Journal on Numerical Analysis},
  volume  = {55},
  number  = {3},
  pages   = {1264--1290},
  year    = {2017},
  doi     = {10.1137/16M105959X}
}

@article{carrillo2024meanfield,
  author  = {Carrillo, Jos{\'e} A. and Hoffmann, Franca and Stuart, Andrew M. and Vaes, Urbain},
  title   = {The mean-field ensemble {K}alman filter: near-{G}aussian setting},
  journal = {SIAM Journal on Numerical Analysis},
  volume  = {62},
  number  = {6},
  pages   = {2549--2587},
  year    = {2024},
  doi     = {10.1137/24M1628207}
}

@article{hunt2007letkf,
  author  = {Hunt, Brian R. and Kostelich, Eric J. and Szunyogh, Istvan},
  title   = {Efficient data assimilation for spatiotemporal chaos: a local ensemble transform {K}alman filter},
  journal = {Physica D: Nonlinear Phenomena},
  volume  = {230},
  number  = {1--2},
  pages   = {112--126},
  year    = {2007},
  doi     = {10.1016/j.physd.2006.11.008}
}

@article{anderson2001eakf,
  author  = {Anderson, Jeffrey L.},
  title   = {An ensemble adjustment {K}alman filter for data assimilation},
  journal = {Monthly Weather Review},
  volume  = {129},
  number  = {12},
  pages   = {2884--2903},
  year    = {2001},
  doi     = {10.1175/1520-0493(2001)129<2884:AEAKFF>2.0.CO;2}
}

@article{bishop2001etkf,
  author  = {Bishop, Craig H. and Etherton, Brian J. and Majumdar, Sharanya J.},
  title   = {Adaptive sampling with the ensemble transform {K}alman filter. {Part I}: theoretical aspects},
  journal = {Monthly Weather Review},
  volume  = {129},
  number  = {3},
  pages   = {420--436},
  year    = {2001},
  doi     = {10.1175/1520-0493(2001)129<0420:ASWTET>2.0.CO;2}
}

@article{whitaker2002ensrf,
  author  = {Whitaker, Jeffrey S. and Hamill, Thomas M.},
  title   = {Ensemble data assimilation without perturbed observations},
  journal = {Monthly Weather Review},
  volume  = {130},
  number  = {7},
  pages   = {1913--1924},
  year    = {2002},
  doi     = {10.1175/1520-0493(2002)130<1913:EDAWPO>2.0.CO;2}
}

@article{tippett2003ensrf,
  author  = {Tippett, Michael K. and Anderson, Jeffrey L. and Bishop, Craig H. and Hamill, Thomas M. and Whitaker, Jeffrey S.},
  title   = {Ensemble square root filters},
  journal = {Monthly Weather Review},
  volume  = {131},
  number  = {7},
  pages   = {1485--1490},
  year    = {2003},
  doi     = {10.1175/1520-0493(2003)131<1485:ESRF>2.0.CO;2}
}

@article{iglesias2013eki,
  author  = {Iglesias, Marco A. and Law, Kody J. H. and Stuart, Andrew M.},
  title   = {Ensemble {K}alman methods for inverse problems},
  journal = {Inverse Problems},
  volume  = {29},
  number  = {4},
  pages   = {045001},
  year    = {2013},
  doi     = {10.1088/0266-5611/29/4/045001}
}

@article{stuart2010bayesian,
  author  = {Stuart, Andrew M.},
  title   = {Inverse problems: a {B}ayesian perspective},
  journal = {Acta Numerica},
  volume  = {19},
  pages   = {451--559},
  year    = {2010},
  doi     = {10.1017/S0962492910000061}
}

@article{lorenz1998optimal,
  author  = {Lorenz, Edward N. and Emanuel, Kerry A.},
  title   = {Optimal sites for supplementary weather observations: simulation with a small model},
  journal = {Journal of the Atmospheric Sciences},
  volume  = {55},
  number  = {3},
  pages   = {399--414},
  year    = {1998},
  doi     = {10.1175/1520-0469(1998)055<0399:OSFSWO>2.0.CO;2}
}

@article{butler2012part1,
  author  = {Breidt, J. and Butler, Troy and Estep, Don},
  title   = {A measure-theoretic computational method for inverse sensitivity problems {I}: method and analysis},
  journal = {SIAM Journal on Numerical Analysis},
  volume  = {49},
  number  = {5},
  pages   = {1836--1859},
  year    = {2011},
  doi     = {10.1137/100785946}
}

@article{butler2012part2,
  author  = {Butler, Troy and Estep, Don and Sandelin, J.},
  title   = {A computational measure theoretic approach to inverse sensitivity problems {II}: a posteriori error analysis},
  journal = {SIAM Journal on Numerical Analysis},
  volume  = {50},
  number  = {1},
  pages   = {22--45},
  year    = {2012},
  doi     = {10.1137/100785958}
}

@book{tarantola2005inverse,
  author    = {Tarantola, Albert},
  title     = {Inverse Problem Theory and Methods for Model Parameter Estimation},
  publisher = {SIAM},
  address   = {Philadelphia, PA},
  year      = {2005},
  doi       = {10.1137/1.9780898717921}
}

@book{kaipio2005statistical,
  author    = {Kaipio, Jari P. and Somersalo, Erkki},
  title     = {Statistical and Computational Inverse Problems},
  publisher = {Springer},
  address   = {New York, NY},
  series    = {Applied Mathematical Sciences},
  number    = {160},
  year      = {2005},
  doi       = {10.1007/b138659}
}

@book{law2015data,
  author    = {Law, Kody and Stuart, Andrew and Zygalakis, Konstantinos},
  title     = {Data Assimilation: A Mathematical Introduction},
  publisher = {Springer},
  address   = {Cham},
  series    = {Texts in Applied Mathematics},
  number    = {62},
  year      = {2015},
  doi       = {10.1007/978-3-319-20325-6}
}

@book{reich2015probabilistic,
  author    = {Reich, Sebastian and Cotter, Colin},
  title     = {Probabilistic Forecasting and Bayesian Data Assimilation},
  publisher = {Cambridge University Press},
  year      = {2015},
  doi       = {10.1017/CBO9781107706804}
}

@book{asch2016data,
  author    = {Asch, Mark and Bocquet, Marc and Nodet, Ma{\"e}lle},
  title     = {Data Assimilation: Methods, Algorithms, and Applications},
  publisher = {SIAM},
  address   = {Philadelphia, PA},
  year      = {2016},
  doi       = {10.1137/1.9781611974546}
}

\end{document}